\documentclass[12pt,a4paper]{amsart}

\usepackage[top=3cm, bottom=3cm, left=2.5cm, right=2.5cm]{geometry}

\usepackage{fullpage}

\usepackage{amssymb}
\usepackage{amsmath,bm}
\usepackage{amscd}
\usepackage[latin1]{inputenc}
\usepackage{wrapfig}
\usepackage{psfrag}
\usepackage{epsfig}
\usepackage{epic}
\usepackage{eepic}
\usepackage{pifont}
\usepackage[usenames,dvipsnames]{xcolor}
\usepackage[abs]{overpic}  
\usepackage{bbold}
\usepackage{pdflscape}   
\usepackage{mathabx} 

\usepackage{wasysym}
\usepackage{ulem}

\usepackage[all]{xy}

\usepackage{hyperref}

\usepackage{caption}

\usepackage{setspace}

\usepackage{enumitem}

\usepackage{mathrsfs}

\setlength{\belowcaptionskip}{-0.1in}

\usepackage{rotating}
\usepackage{mathtools}

\usepackage{setspace}

\usepackage[section]{algorithm}
\usepackage{algorithmicx,algpseudocode}
\usepackage{tikz}
\usetikzlibrary{shapes, quotes}

\usepackage[all]{xy}


\def \R  {{\mathbb R}}

\def \Z  {{\mathbb Z}}

\def \C  {{\mathbb C}}

\def \CP {{{\mathbb C}{\mathbb P}}}

\def \CP {{\mathbb C}{\mathbb P}}

\def \fk  {{\mathfrak k}}

\def \G {{\mathcal G}}

\def \cU {{\mathcal U}}

\renewcommand{\mod}{/\! /}

\renewcommand{\emph}{\textbf}

\DeclareMathOperator \rank {rank}

\DeclareMathOperator \id {id}

\DeclareMathOperator \pt {pt}

\newcommand{\acts}{\mathbin{\raisebox{-.5pt}{\reflectbox{\begin{sideways}$\circlearrowleft$\end{sideways}}}}}

\numberwithin{figure}{section}
\numberwithin{table}{section}
\numberwithin{equation}{section}
\swapnumbers


\makeatletter
\let\c@equation\c@figure
\makeatother

\makeatletter
\let\c@table\c@figure
\makeatother

\makeatletter
\let\c@algorithm\c@figure
\makeatother

\newtheorem{Lemma}[equation]{Lemma}
\newtheorem{Theorem}[equation]{Theorem}
\newtheorem*{thm*}{Theorem}

\newtheorem{Question*}{Question}

\newtheorem{Proposition}[equation]{Proposition}
\newtheorem{Corollary}[equation]{Corollary}

\newtheorem*{Lemma*}{Lemma}
\newtheorem*{Corollary*}{Corollary}
\newtheorem{definition}[equation]{Definition}

\theoremstyle{definition}

\newtheorem{Definition}[equation]{Definition}

\newtheorem{Remark}[equation]{Remark}

%
%

\setlength{\marginparwidth}{0.7in}



\newcommand{\labell}[1] {\label{#1}} 

%
%

\begin{document}

\title[Equivariant cohomology distinguishes four-dimensional Hamiltonian $\mathbf{S^1}$-manifolds]{Equivariant cohomological rigidity for four-dimensional Hamiltonian  $\mathbf{S^1}$-manifolds}

\date{\today}

\author{Tara S. Holm}
\address{Department of Mathematics, Cornell University, Ithaca, NY  14853-4201, USA}
\email{tara.holm@cornell.edu}

\author{Liat Kessler}
\address{Department of Mathematics, Physics, and Computer Science, University of Haifa, Israel}
\email{lkessler@math.haifa.ac.il}

\author{Susan Tolman}
\address{Department of Mathematics,
University of Illinois,
Urbana, IL 61801, USA}
\email{tolman@illinois.edu}

\begin{abstract}
For manifolds equipped with group actions,  we have the following natural question: To what  extent  does  the equivariant cohomology determine the equivariant diffeotype? We resolve this question for Hamiltonian circle actions on compact, connected symplectic four-manifolds. They are equivariantly diffeomorphic if and only if their equivariant cohomology
rings are isomorphic as algebras over the equivariant cohomology of a point. 
In fact, we prove a stronger claim: each isomorphism between their equivariant cohomology rings is induced by an equivariant diffeomorphism.
 \end{abstract}

\subjclass[2010]{53D35 (55N91,53D20,57S15)}

\keywords{Symplectic geometry, Hamiltonian torus action, moment map, complexity one, equivariant cohomology}

\maketitle

\section{Introduction}

For any algebraic invariant attached to a manifold, a key question is: To what extent  does the invariant determine the manifold? In general, cohomology does not detect the diffeotype of a manifold.  However, in recent years there has been substantial activity on establishing 
when certain classes of manifolds are {\bf cohomologically rigid}, 
that is, determined by their cohomology.
In particular, Masuda and Suh conjectured that two toric manifolds (smooth toric varieties) 
are diffeomorphic if their integral cohomology rings are isomorphic \cite{Masuda-Suh}; 
this conjecture has been affirmed in many special cases \cite{CMS}.

For manifolds with group actions, the analogous question is: To what  extent  does  the equivariant cohomology determine the equivariant diffeotype? 
For example, compact toric manifolds are {\bf equivariantly cohomologically rigid}: two compact  toric manifolds
whose equivariant cohomology rings are isomorphic as algebras are equivariantly diffeomorphic (indeed, this holds for the broader class of quasitoric manifolds) \cite[Theorem~4.1]{masuda}, \cite[Theorem~6.6]{Davis}, and \cite[Remark~2.5(1)]{Ma20}.
Moreover,  if the algebra isomorphism preserves the first equivariant Chern class, then they are equivariantly isomorphic as varieties; see \cite{masuda} and \cite[Remark~2.5(3)]{Ma20}.
Similarly, for compact, connected, four-dimensional Hamiltonian $S^1$-manifolds, 
if an equivariant cohomology algebra isomorphism
preserves the first equivariant Chern class, then
the manifolds are equivariantly biholomorphic (with respect to some compatible invariant complex structures); see \cite[Example 1.2 and Corollary 7.34]{HK}.
Note also that the diffeotype of such a manifold
is determined by the cohomology ring $H^*(M;\Z)$;
see Proposition~\ref{prop:diff}.

In order to state our results, we recall a few definitions.
A circle
action\footnote{In this paper, we will always assume that actions are effective.}
on a symplectic manifold $(M,\omega)$ is \emph{Hamiltonian} 
if there exists a \emph{moment map}, that is, a function 
 $\Phi \colon M \to \R$ satisfying
$- \iota(\xi) \omega=d \Phi $, where $\xi$ is the vector field generating the action.
In this case, we call $S^1 \acts (M,\omega)$ a 
\emph{Hamiltonian $\mathbf {S^1}$-manifold}.
Finally, recall that the (Borel)  {\bf equivariant cohomology} of an $S^1$-manifold $M$ is
a graded 
ring $H_{S^1}^*(M;\Z )$
naturally endowed with the structure of an algebra over $H^*_{S^1}(\pt;\Z)$, the equivariant
cohomology of a point.

In this paper we prove that equivariant cohomological rigidity holds for compact, connected, four-dimensional Hamiltonian $S^1$-manifolds:  two such $S^1$-manifolds
whose equivariant cohomology rings are isomorphic as algebras
are equivariantly diffeomorphic.   Indeed, our main result is that  {\bf strong equivariant cohomological rigidity} holds, that is,
the equivariant diffeomorphism can be chosen to induce the
given isomorphism in equivariant cohomology.

\begin{Theorem}
\label{thm:strong}
Let $M$ and $\widetilde{M}$ be compact, connected, four-dimensional 
Hamiltonian $S^1$-manifolds.  
Given an algebra isomorphism from
 $H_{S^1}^*(M;\Z)$ to $H_{S^1}^*(\widetilde{M};\Z)$, there exists an equivariant diffeomorphism 
 from $\widetilde{M}$ to $M$ 
 that induces the given algebra isomorphism.
\end{Theorem}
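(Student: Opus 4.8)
The plan is to invoke the classification of compact, connected, four-dimensional Hamiltonian $S^1$-manifolds in terms of combinatorial data and to show that this data is faithfully recorded in the equivariant cohomology ring as an $H^*_{S^1}(\pt;\Z)$-algebra. Recall that such a manifold is obtained from its \emph{minimal model} — either $\CP^2$, a Hirzebruch surface, or a ruled surface over a Riemann surface, with a standard $S^1$-action — by performing a sequence of equivariant blowups at fixed points, and the isotopy class of the action (hence the equivariant diffeotype) is determined by the moment map image together with the weights and areas assigned along the way; the \emph{decorated graph} of Karshon records exactly this, and two Hamiltonian $S^1$-manifolds are equivariantly symplectomorphic (in particular equivariantly diffeomorphic) if and only if they have the same decorated graph, and equivariantly diffeomorphic if and only if they have the same decorated graph up to the moves that do not change the diffeotype (rescaling areas, etc.). So the first step is to fix the precise combinatorial classification we will use and to state exactly which combinatorial invariants must be matched in order to conclude the existence of the equivariant diffeomorphism.

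The second, and main, step is to prove that the equivariant cohomology ring $H^*_{S^1}(M;\Z)$, as an algebra over $H^*_{S^1}(\pt;\Z) = \Z[t]$, determines this combinatorial data. Since the action has isolated fixed points or fixed surfaces and $M$ is equivariantly formal (Hamiltonian $S^1$-actions on compact symplectic manifolds are equivariantly formal), the restriction map $H^*_{S^1}(M;\Z) \to H^*_{S^1}(M^{S^1};\Z) = \bigoplus_F H^*(F;\Z)[t]$ is injective, and its image is cut out by the GKM-type congruences along the isotropy spheres. From an abstract algebra isomorphism $\psi \colon H^*_{S^1}(M;\Z) \to H^*_{S^1}(\widetilde{M};\Z)$ one must: (i) recover the set of fixed components and their restriction-characters — this is done by locating the minimal primes or the idempotents of the localized rings $H^*_{S^1}(M;\Z)\otimes_{\Z[t]} \Q[t,t^{-1}]$, which correspond bijectively to fixed components, so $\psi$ induces a bijection of fixed components; (ii) recover the weights of the isotropy representation at each fixed point, which appear as the Euler classes of the normal bundles, readable off from the kernel of the restriction to each fixed component together with the $\Z[t]$-module structure; (iii) recover, from the first equivariant Chern class is \emph{not} assumed preserved, so instead one recovers the areas / the moment-map data from the $H^*$-part of the ring (the non-equivariant cohomology and the Poincaré pairing), using Proposition~\ref{prop:diff} which already says $H^*(M;\Z)$ determines the diffeotype of the underlying manifold. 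The key point is that for four-manifolds, the decorated graph has few enough parameters — the labels on the vertices (fixed components: isolated points, or spheres with their areas and normalized symplectic areas), the weights on the edges, and the genus of any fixed surface — that all of them are determined by equivariant cohomology once one pins down the fixed-point data and the module structure as above; here one exploits heavily that we work in complex dimension two, so at each isolated fixed point there are exactly two weights whose signs and magnitudes are constrained by the index (the Morse index of $\Phi$), and these are recoverable from the grading.

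The third step is the construction: given that $\psi$ matches the decorated graphs of $M$ and $\widetilde{M}$, apply the classification to produce an equivariant diffeomorphism $f \colon \widetilde{M} \to M$, and then check that $f^*$ agrees with $\psi$. A priori $f^*$ and $\psi$ are two algebra isomorphisms inducing the same matching of fixed data; one shows they differ by an automorphism of $H^*_{S^1}(M;\Z)$ that is trivial on the fixed-point restrictions, hence trivial (again by injectivity of localization), or one modifies $f$ by an equivariant diffeomorphism of $M$ realizing the needed automorphism — this is where "strong" rigidity, as opposed to bare rigidity, costs real work, since one must check that the group of equivariant diffeomorphisms acts transitively enough on the set of cohomology isomorphisms compatible with a fixed combinatorial matching. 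I expect \textbf{Step 2(ii)–(iii)} — extracting the weights \emph{and} the symplectic/area decorations from the ring structure alone, without the equivariant Chern class, and doing so in a way that is natural enough to later match $f^*$ — to be the main obstacle: the weights at a fixed point are only determined up to the GKM relations and one must rule out exotic algebra isomorphisms that permute fixed components in a combinatorially inconsistent way, which is a case analysis over the possible minimal models and blowup patterns. The saving grace is the low dimension: the decorated graphs are essentially one-dimensional chains or small trees, so the case analysis, while tedious, is finite and controlled.
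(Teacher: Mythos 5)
Your overall architecture (classification by combinatorial data, then showing the ring determines that data, then matching $f^*$ with the given isomorphism) has the right shape, but there is a genuine gap at the center of your Step 2(iii), and it is not the one you flagged. The equivariant cohomology ring does \emph{not} determine the decorated graph: the symplectic areas of the fixed surfaces and the moment-map values of the fixed components are simply not recoverable from $H^*_{S^1}(M;\Z)$ as an algebra (the ring does not see the symplectic form, and Proposition~\ref{prop:diff} only gives the diffeotype, not any area data). So your plan to ``recover the areas / the moment-map data from the $H^*$-part of the ring'' cannot work. What the ring does determine is the coarser \emph{dull} graph (the decorated graph with moment values and areas erased), and the real content of the paper is therefore the converse direction that you dismissed as ``the moves that do not change the diffeotype (rescaling areas, etc.)'': one must prove that any isomorphism of dull graphs is induced by an orientation-preserving equivariant diffeomorphism (Theorem~\ref{Thm:main}). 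Rescaling areas is indeed harmless, but the subtle move is the \emph{partial flip}: a free chain of isotropy spheres can sit in the manifold ``right side up'' or ``upside down'' relative to the moment map, producing distinct decorated graphs with the same dull graph, and showing that the resulting manifolds are equivariantly diffeomorphic requires the local models (corner manifolds), the symplectic blowup maps, and an explicit interpolating diffeomorphism --- this occupies Sections 3 and 4 of the paper and is entirely absent from your proposal.

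There are two further omissions in your Step 3. First, when the minimal fixed component is a surface of positive genus, $H^1_{S^1}(M;\Z)\neq 0$ and matching $f^*$ with the given isomorphism on even degrees is not enough; one must also realize the prescribed isomorphism on degree one, which the paper does by quoting the fact that every orientation-preserving automorphism of $H^*(\Sigma;\Z)$ is realized by a diffeomorphism of the surface $\Sigma$, and then extending that diffeomorphism over the whole manifold via Karshon's uniqueness theorem (Lemma~\ref{lem:new eq diff}). Your remark that a discrepancy ``trivial on the fixed-point restrictions'' must be trivial does not dispose of this, since two equivariant diffeomorphisms inducing the same map of dull graphs can genuinely differ on $H^1$ of the fixed surface. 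Second, the given algebra isomorphism may be orientation-reversing; this forces $b_2=2$ and is handled by exhibiting an explicit orientation-reversing equivariant self-diffeomorphism of a Hirzebruch or ruled surface (Lemma~\ref{2reversing}), a case your proposal does not address.
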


Our proof  of Theorem~\ref{thm:strong} 
relies on an earlier paper by
the first two authors \cite{HK}. 
In that paper, they define a combinatorial invariant of compact four-dimensional  Hamiltonian $S^1$-manifolds, called a {\bf dull graph}, which can be obtained from Karshon's decorated graph \cite{karshon} by 
forgetting selected information; see Definition \ref{def:dull}.
They then  establish a close correspondence between
dull graphs and  equivariant cohomology.
The other key ingredient in the proof  of Theorem~\ref{thm:strong} is the following theorem,
which  shows
that the dull graph determines the equivariant diffeotype of a four-dimensional Hamiltonian $S^1$-manifold.

\begin{Theorem} \labell{Thm:main}
Let $M$ and $\widetilde{M}$ be  compact, connected,  four-dimensional  Hamiltonian $S^1$-manifolds.
Given an isomorphism between  the dull graphs associated to  $M$ and $\widetilde{M}$,
there exists
an orientation-preserving, equivariant diffeomorphism  from $M$ to $\widetilde M$
that induces the given isomorphism.
\end{Theorem}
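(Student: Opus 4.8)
The plan is to reduce the statement to Karshon's classification theorem \cite{karshon}, which asserts that the decorated graph is a complete invariant of a compact four-dimensional Hamiltonian $S^1$-manifold up to equivariant symplectomorphism, and in particular up to equivariant diffeomorphism. So the crux is to upgrade an isomorphism of \emph{dull} graphs to an isomorphism of \emph{decorated} graphs, at the cost of possibly changing $M$ within its equivariant diffeotype. First I would recall exactly what data the dull graph retains from Karshon's graph and which decorations it forgets: the dull graph remembers the combinatorics of fixed points and fixed surfaces together with their moment map values (so the skeleton of the moment map image and the isotropy weights along edges), but it forgets the symplectic areas and genera of the fixed surfaces, and the precise ``size'' labels on the $\Z_k$-sphere chains. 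The key point, established in \cite{HK}, is that this forgotten data is precisely what is irrelevant to the equivariant \emph{smooth} structure: two four-dimensional Hamiltonian $S^1$-manifolds with isomorphic dull graphs can each be deformed, through a family of Hamiltonian $S^1$-manifolds, to a normal-form model determined by the dull graph alone.

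The main steps I would carry out are as follows. Step one: given the isomorphism of dull graphs between $M$ and $\widetilde M$, use it to match up the fixed point data, the interior and exterior edges, the fixed surfaces, and the extremal sets. Step two: appeal to the local normal form theorem for Hamiltonian $S^1$-actions near fixed points and fixed surfaces to build, on each piece, a standard equivariant model whose smooth type depends only on the matched combinatorial data; in dimension four these local models are rotations of $\C^2$, of a disk bundle over a surface, and of $S^2\times\C$, so the ambiguity is exactly a choice of areas/genera which do not affect the diffeotype. Step three: glue these local models along the ``free'' part of the action — the complement of the fixed set, where the reduced spaces are surfaces — using the fact that in dimension four the reduced space at a regular value is $S^2$ or a point, so the gluing data is rigid up to isotopy. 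This produces an equivariant diffeomorphism from $M$ to a model $M_0$ and from $\widetilde M$ to a model $\widetilde M_0$ with $M_0$ and $\widetilde M_0$ \emph{equal} as equivariant manifolds via the dull-graph isomorphism. Step four: check orientations — both $M$ and $\widetilde M$ carry the symplectic orientation $\omega \wedge \omega$, and I would verify that the constructed diffeomorphisms respect it, or post-compose with an orientation-reversing equivariant diffeomorphism of a model if a sign needs correcting (noting the $S^1$-action forces any such map to be well-behaved on the moment data). Composing, $M \to M_0 = \widetilde M_0 \to \widetilde M$ is the desired orientation-preserving equivariant diffeomorphism, and by construction it induces the given dull-graph isomorphism.

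The hard part will be Step three, the gluing: one must show that the equivariant diffeomorphisms built locally near the fixed data can be made to agree on overlaps and then extended across the free part in an $S^1$-equivariant way. Over the free locus this amounts to comparing principal $S^1$-bundles over a two-dimensional base (the reduced space) together with the connection-like data recording how the moment map interpolates between the boundary pieces; the obstruction lives in $H^2$ of a surface with boundary and therefore vanishes, but making this precise requires carefully tracking the identifications coming from the local normal forms and ensuring compatibility at the $\Z_k$-isotropy circles, where two ends of the free locus are glued through a non-free orbit. A secondary subtlety is bookkeeping at the extremal fixed surfaces versus extremal fixed points: the dull graph treats these uniformly enough that one isomorphism type of dull graph can a priori be realized by manifolds whose extrema differ, and one must confirm — as in \cite{HK} — that this does not actually happen, i.e.\ that the dull graph does pin down whether each extremum is a point or a surface, so that the local models genuinely match. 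Once these two points are handled, the assembly of the global equivariant diffeomorphism and the verification that it induces the prescribed isomorphism are routine.
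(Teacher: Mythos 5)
There is a genuine gap, and it starts with a misreading of what the dull graph records. You write that the dull graph remembers the moment map values and forgets the genera of the fixed surfaces; it is exactly the other way around. The dull graph keeps the genus and self-intersection of each fixed surface but forgets all moment map values (and areas). Consequently an isomorphism of dull graphs need not respect the partial order that the moment map puts on the isolated fixed points: along a \emph{free chain} of isotropy spheres (a connected component of the non-trivial-stabilizer set containing neither extremum), the given isomorphism may match the chain in $M$ to the corresponding chain in $\widetilde M$ \emph{upside down}, reversing the order of the isotropy weights $k$ along the chain. This is precisely the case your argument cannot handle: Karshon's theorem, the local normal form models, and any deformation ``through a family of Hamiltonian $S^1$-manifolds'' all preserve the moment-map ordering of the fixed points, so no such argument can upgrade an order-reversing dull-graph isomorphism to a decorated-graph isomorphism. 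The result from \cite{HK} you invoke (Lemma~\ref{lem:hef} in this paper) covers only \emph{orientation-preserving} isomorphisms of dull graphs, i.e.\ those compatible with the moment-map orderings.

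The missing content is the entire ``partial flip'' mechanism, which is the heart of the paper's proof. One must show that if the oriented dull graphs of two manifolds differ by reversing the direction of a single free chain, the manifolds are still equivariantly diffeomorphic by a diffeomorphism inducing that reversal. The paper does this by blowing the chain down to a free point with weights $(1,-1)$ (Lemma~\ref{lem:ex}), identifying an invariant neighborhood of the chain with a triangular neighborhood in a toric ``corner manifold'' carrying the anti-diagonal circle action (Lemma~\ref{lem:new}), and then writing down an explicit $S^1$-equivariant diffeomorphism between a corner manifold and its mirror that interpolates between the map induced by $(z_1,z_2)\mapsto(-\overline z_2,\overline z_1)$ near the chain and the identity outside a slightly larger neighborhood (Lemmas~\ref{lem:fliptri} and~\ref{lem:combined}), so that it can be spliced into the ambient manifold (Proposition~\ref{prop:partial-flip-diffeo}). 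Your Step three worries about gluing local normal forms over the free locus, which is a real but already-solved issue (it is subsumed in Karshon's classification via Lemma~\ref{lem:hef}); the actual difficulty you have not identified is the chain reversal. Your Step four is also shakier than you suggest: orientation-reversing equivariant self-diffeomorphisms exist only when $b_2=2$ (Lemma~\ref{2reversing}), so they are not available as a general sign-correcting device; the paper instead absorbs the global orientation issue by passing to $(M,-\omega,-\Phi)$ (Lemma~\ref{lem:fullflip}).
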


A symplectic manifold $(M,\omega)$ is naturally oriented by $\omega^{\frac{1}{2} \dim M}$.
An isomorphism of algebras
 $\Lambda \colon H_{S^1}^{*}(M;\Z) \to H_{S^1}^{*}(\widetilde{M};\Z)$ 
 is 
 {\bf orientation-preserving} if the induced isomorphism 
in ordinary cohomology is orientation-preserving.  
Otherwise, it is {\bf orientation-reversing}. 
Note that an equivariant diffeomorphism $f \colon \widetilde M \to M$ is orientation-preserving exactly if the induced map $f^*
\colon H_{S^1}^{*}(M;\Z) \to H_{S^1}^{*}(\widetilde{M};\Z)$  is orientation-pre\-ser\-ving.
 Theorem~\ref{Thm:main} has the following corollary.

\begin{Corollary}\label{cor:main}
Let $M$ and $\widetilde{M}$ be  compact, connected,  four-dimensional  Hamiltonian $S^1$-manifolds.
The following are equivalent:
\begin{enumerate}
\item  $M$ and $\widetilde{M}$ are equivariantly diffeomorphic; \label{it:diffeo} 

\item $M$ and $\widetilde{M}$ are equivariantly homeomorphic; \label{it:homeo}

\item  $H_{S^1}^{*}(M;\Z)$  and $H_{S^1}^{*}(\widetilde{M};\Z)$   are isomorphic as algebras;
and \label{it:eq coh}

\item the dull graphs of $M$ and $\widetilde{M}$ are isomorphic  as labeled graphs. \label{it:dull graph}
\end{enumerate}
 Moreover the diffeomorphism in \eqref{it:diffeo}, the homeomorphism in \eqref{it:homeo}, and the algebra isomorphism in \eqref{it:eq coh}
can always be chosen to be orientation-preserving.
\end{Corollary}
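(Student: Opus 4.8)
The plan is to derive Corollary~\ref{cor:main} essentially formally from Theorem~\ref{Thm:main}, together with the correspondence between dull graphs and equivariant cohomology established in \cite{HK} and the ordinary-cohomology rigidity recorded in Proposition~\ref{prop:diff}. Implications among the four statements will be organized as a cycle (plus the orientation refinement tracked separately). The implication \eqref{it:diffeo} $\Rightarrow$ \eqref{it:homeo} is trivial, since an equivariant diffeomorphism is in particular an equivariant homeomorphism. The implication \eqref{it:diffeo} $\Rightarrow$ \eqref{it:eq coh} is also immediate: an equivariant diffeomorphism $f \colon M \to \widetilde M$ induces an isomorphism $f^* \colon H_{S^1}^*(\widetilde M;\Z) \to H_{S^1}^*(M;\Z)$ of algebras over $H^*_{S^1}(\pt;\Z)$, because equivariant cohomology is functorial for equivariant maps. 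The substantive content is packaged entirely in Theorem~\ref{Thm:main}, which gives \eqref{it:dull graph} $\Rightarrow$ \eqref{it:diffeo} (an isomorphism of dull graphs produces an equivariant, orientation-preserving diffeomorphism).

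To close the cycle I need \eqref{it:homeo} $\Rightarrow$ something combinatorial, and \eqref{it:eq coh} $\Rightarrow$ \eqref{it:dull graph}. For the latter, I would invoke the close correspondence between dull graphs and equivariant cohomology from \cite{HK}: the dull graph can be recovered (as a labeled graph) from the $H^*_{S^1}(\pt;\Z)$-algebra $H_{S^1}^*(M;\Z)$, so an algebra isomorphism forces an isomorphism of dull graphs. This is the step where I must be careful to cite the precise statement in \cite{HK} --- it is presumably phrased as: isomorphic equivariant cohomology algebras have isomorphic dull graphs, possibly with an explicit recipe extracting the vertices (fixed components, with their moment-map values and isotropy weights up to the relevant equivalence) and edges (isotropy spheres) from ring-theoretic data such as the restriction maps to fixed points and the structure of $H_{S^1}^*(M;\Z)$ as a module. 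For \eqref{it:homeo} $\Rightarrow$ \eqref{it:eq coh}, an equivariant homeomorphism already induces an isomorphism of Borel equivariant cohomology algebras (the Borel construction is a homotopy-theoretic invariant and is functorial for equivariant continuous maps), so this implication is formal as well. Thus the cycle reads \eqref{it:diffeo} $\Rightarrow$ \eqref{it:homeo} $\Rightarrow$ \eqref{it:eq coh} $\Rightarrow$ \eqref{it:dull graph} $\Rightarrow$ \eqref{it:diffeo}, proving the equivalence.

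For the final sentence --- that the diffeomorphism, homeomorphism, and algebra isomorphism can be taken orientation-preserving --- I would argue as follows. Given any isomorphism of dull graphs, Theorem~\ref{Thm:main} already supplies an \emph{orientation-preserving} equivariant diffeomorphism $f \colon M \to \widetilde M$; this is simultaneously an orientation-preserving equivariant homeomorphism, and the induced algebra isomorphism $f^*$ is orientation-preserving by the remark in the text relating orientation of $f$ to orientation of $f^*$. So once we know the dull graphs are isomorphic --- which, by the cycle above, follows from any of \eqref{it:diffeo}--\eqref{it:eq coh} --- we may \emph{replace} whatever (possibly orientation-reversing) map or isomorphism we started with by the orientation-preserving one produced by Theorem~\ref{Thm:main}. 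The one point needing a sentence of justification is that an isomorphism of dull graphs, as labeled graphs, automatically respects the data determining the orientation so that the map from Theorem~\ref{Thm:main} is genuinely orientation-preserving; but this is exactly the content of that theorem's conclusion, so nothing further is required.

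The main obstacle --- really the only non-formal point --- is locating and quoting the exact form of the dull-graph/equivariant-cohomology correspondence from \cite{HK} needed for \eqref{it:eq coh} $\Rightarrow$ \eqref{it:dull graph}, since everything else is functoriality of (equivariant) cohomology plus a direct appeal to Theorem~\ref{Thm:main}. I would state that correspondence as a displayed lemma (with a reference to the relevant theorem in \cite{HK}) before running the cycle, so that the proof of Corollary~\ref{cor:main} itself becomes a short bookkeeping argument.
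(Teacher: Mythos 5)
Your proposal is correct and follows essentially the same route as the paper: the cycle \eqref{it:diffeo} $\Rightarrow$ \eqref{it:homeo} $\Rightarrow$ \eqref{it:eq coh} $\Rightarrow$ \eqref{it:dull graph} $\Rightarrow$ \eqref{it:diffeo}, with the first two implications formal, the third supplied by the dull-graph/equivariant-cohomology correspondence of \cite{HK} (the paper cites \cite[Theorem~1.1]{HK} for exactly this), and the fourth plus the orientation-preserving refinement supplied by Theorem~\ref{Thm:main}. The mention of Proposition~\ref{prop:diff} in your opening is unnecessary, but nothing in the argument depends on it.
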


\begin{proof}
The implications \eqref{it:diffeo} $\Longrightarrow$ \eqref{it:homeo} and \eqref{it:homeo} $\Longrightarrow$ \eqref{it:eq coh} are immediate, while
the implication \eqref{it:eq coh} $\Longrightarrow$ \eqref{it:dull graph} is a direct consequence of \cite[Theorem~1.1]{HK}. 
The implication \eqref{it:dull graph} $\Longrightarrow$ \eqref{it:diffeo}, and the fact that the maps can be chosen to be orientation-preserving  
is by Theorem~\ref{Thm:main}.
\end{proof}

 In particular, equivariant cohomological rigidity holds for  compact, connected four-dimensional Hamiltonian $S^1$-manifolds. More care is required to show that the strong version of equivariant cohomological rigidity holds; we prove this in section~\ref{sec:thm1}.

\begin{Remark}
The maps in 
\eqref{it:diffeo}, \eqref{it:homeo}, 
and \eqref{it:eq coh} in Corollary~\ref{cor:main} can all be chosen to be orientation-reversing
if and only if $b_2:=\dim H^{2}(M)=2$.  
One direction follows from Lemma~\ref{2reversing}, where we show that if $b_2=2$, 
the equivariant diffeomorphism in \eqref{it:diffeo}
can be chosen to be orientation-reversing.  
The other direction follows from the fact that if $b_2 \neq 2$, then
there cannot be an orientation-reversing isomorphism of the (equivariant) cohomology ring; see \cite[Remark~7.27]{HK}.
\end{Remark}

\begin{Remark}
Fintushel classified locally smooth circle actions on compact, oriented four-manifolds up to orientation-preserving equivariant homeomorphism in terms of their {\bf weighted orbit spaces} \cite{Fi,Fi2}.
The weighted orbit space
consists of the orbit space $M/S^1$ 
(a three-manifold with boundary), 
labeled with certain isotropy data, and a characteristic class.
In the symplectic setting,
it is straightforward to extract the dull graph from the weighted orbit space.  Recovering Fintushel's 
invariants from the dull graph would give an alternate proof that if two manifolds have the same dull 
graph, then there is an orientation-preserving, equivariant homeomorphism between them; cf.\ Theorem~\ref{Thm:main}.  
Although this endeavor is beyond the scope of the present paper,
the quotient $M/S^1$ is 
homeomorphic to $S^3$ if the fixed points are isolated,
$D^3$ if the action has one fixed surface, and $[0,1] \times \Sigma_g$, if the action has two fixed 
surfaces of genus $g$, by  \cite{KT}.  See also \cite{Suss}.
\end{Remark}

 The remainder of the paper is organized as follows.
In Section~\ref{sec:background},  we introduce oriented dull graphs.  
We then reduce the proof of Theorem 1.2 to
showing that there is  an orientation-preserving equivariant diffeomorphism between two manifolds if their 
oriented dull graphs differ by  an operation called a ``partial flip". 
In Section~\ref{sec:partial}, we construct an equivariant diffeomorphism between any two such manifolds,
completing the proof of Theorem~\ref{Thm:main} and hence of Corollary~\ref{cor:main}.  
The construction of this equivariant diffeomorphism relies on the local model of a neighborhood 
of a chain of isotropy spheres, developed in Section~\ref{sec:corner}.  
These local models are a type of toric manifolds that we call ``corner manifolds".  
Finally, we prove Theorem \ref{thm:strong} in Section \ref{sec:thm1},
taking care to ensure
that the equivariant diffeomorphism we construct induces the given isomorphism of equivariant cohomology.

\subsection*{Acknowledgements}
The authors thank Ben Dozier, Yael Karshon, and Mikiya Masuda for illuminating discussions and
helpful suggestions.
The first author was supported in part
by the National Science Foundation under Grants DMS--1711317 and DMS--2204360. 
The second author was supported in part by the Israel Science Foundation, Grant  570/20, and
an NSF-BSF Grant 2021730. 
The third author was supported in part by Simons Foundation, Grant 637996, and
by the National Science Foundation under Grant
DMS-2204359.
Any opinions, 
findings, and conclusions or recommendations expressed in this material are 
those of the authors and do
not necessarily reflect the views of the Israel Science Foundation, the National Science Foundation,
or the Simons Foundation.

\section{Oriented dull graphs}\label{sec:background}

In this section, we first give a formal definition of the dull graph associated to a compact, connected four-dimensional Hamiltonian $S^1$-manifold, and then describe the natural orientation on this graph.    As we show, we can obtain any other orientation on the dull graph by a sequence of operations: If necessary, we take the ``opposite orientation", and then apply a finite number of ``partial flips".  Finally, we prove that if the oriented dull graphs of two compact, 
connected four-dimensional Hamiltonian $S^1$-manifolds are either isomorphic or have opposite orientations,
then there is an orientation-preserving equivariant diffeomorphism between the manifolds. 
This reduces the proof of  Theorem~\ref{Thm:main} to showing 
that there is  an orientation-preserving equivariant diffeomorphism between two manifolds if their oriented dull graphs differ by a partial flip.  
Sections~\ref{sec:corner} and \ref{sec:partial} will be devoted to this last step.

Let the circle $S^1$ act on a  compact, four-dimensional symplectic manifold $(M, \omega)$ 
with moment map $\Phi \colon M \to \R$.  
Each connected component of the fixed set $M^{S^1}$ is a symplectic submanifold, and hence is either
zero-dimensional or two-dimensional.  
Every point with stabilizer $\Z_k$ for some $k \geq 2$
is contained in an  {\bf isotropy $\Z_k$-sphere}, that is,
 an $S^1$-invariant symplectic  two-sphere with generic stabilizer $\Z_k$.
Note that each isotropy sphere contains exactly two isolated fixed points, and 
each isolated fixed point is contained in at most two isotropy spheres. 
The moment map is a Morse-Bott function and the index of each fixed component is twice 
the number of negative weights.  As Atiyah showed, this implies that the level sets of $\Phi$ are 
connected, and so $\Phi$ has  exactly one local minimum and one local maximum \cite{atiyah}.  
Hence, there are exactly two extremal fixed components and each non-extremal fixed point 
has exactly 
one positive weight and one negative weight.    
In particular, every fixed surface is extremal.

We now introduce dull graphs and orientations
on them.

\begin{definition} \label{def:dull}
The {\bf dull graph} associated to a compact, connected, four-dimensional
Hamiltonian $S^1$-manifold $(M,\omega)$ has one vertex for each fixed 
component and an edge between two vertices exactly if the vertices correspond to isolated fixed points and 
there is an isotropy $\Z_k$-sphere that 
contains those points. 
The dull graph has labels as follows: 
\begin{itemize}
\item ``Thin" vertices represent the isolated fixed points; each such vertex is labeled by whether the moment
map value is extremal or not.
\item ``Fat" vertices represent fixed surfaces; each such vertex is labeled by 
the self-intersection $e$ and the genus $g$ of the surface.
These are always extremal.

\item  Edges between vertices are labeled with integers $k\geq 2$ where the label $k$ indicates the vertices correspond to the fixed points
of an isotropy $\Z_k$-sphere.
\end{itemize}
We will denote the data of a dull graph as $\G=(V,E,L)$.
\end{definition}

Two dull graphs are {\bf isomorphic} if there is a graph isomorphism that respects the labels.

\begin{Remark} \labell{rem:or-pre}
An orientation-preserving equivariant diffeomorphism between four\--dim\-en\-sion\-al Hamiltonian
$S^1$-manifolds induces an  isomorphism  
of dull graphs.  
This  is straight-forward to check once we note
that an isolated fixed point is extremal exactly if the product of the weights is positive.
\end{Remark}

In \cite{karshon}, Karshon assigns a {\bf decorated graph} to each compact, connected, four\--dim\-en\-sion\-al
Hamiltonian $S^1$-manifolds. These graphs contain all the information in the dull graph plus two additional 
labels: the moment value of each fixed component  and the (properly normalized) area $A$  of each fixed surface.  
Hence, the dull graph can be obtained from the decorated graph by forgetting the extra information. 
Note that Karshon does not need to specify the ``extremal" or self-intersection labels because they can be 
obtained from the other data \cite[Lemma~2.18]{karshon}.  
See Figure~\ref{fig:extended and dull}.

\begin{center}
\begin{figure}[h]
\includegraphics[height=5cm]{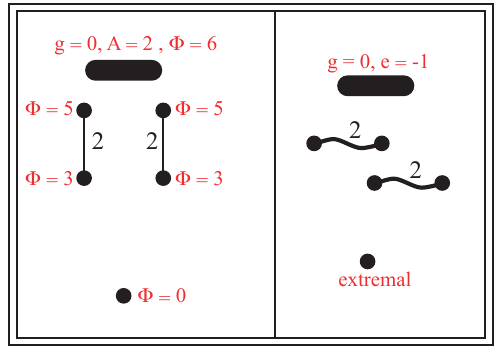} 
\caption[.]{The decorated  graph
and dull graph for a
compact, connected, four-dimensional 
 Hamiltonian $S^1$-manifold.  
The (symplectic) manifold is obtained from a  
$\CP^2$  of symplectic size $6$ by equivariant blowups of 
symplectic sizes $2$, $2$, $1$, and $1$.
}
\label{fig:extended and dull}
\end{figure}
\end{center}

\newpage

As Karshon proves in \cite[Theorem 4.1]{karshon}, decorated graphs classify compact, connected four-dimensional 
Hamiltonian $S^1$-manifolds; see also \cite{ahara hattori, audin}.  The following is an expatiation of her result.

\begin{Theorem} [Karshon] \label{thm:elucid}
Let $(M,\omega,\Phi)$ and $(\widetilde{M},\widetilde{\omega},\widetilde{\Phi})$ be compact, connected four\--dim\-en\-sion\-al Hamiltonian $S^1$-manifolds.
Let $f \colon M_{\min} \to {\widetilde{M}}_{\min}$ be an orientation-preserving diffeomorphism
between the minimal fixed components of $M$ and $\widetilde{M}$,
respectively.
Given an isomorphism between the decorated graphs associated to $M$ 
and $\widetilde{M}$,  there exists an equivariant symplectomorphism $h \colon M \to \widetilde{M}$ inducing  that isomorphism
such that $h|_{M_{\min}}$
is isotopic to $f$.
\end{Theorem}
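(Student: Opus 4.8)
\textbf{Proof proposal for Theorem~\ref{thm:elucid}.}

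The plan is to deduce this elucidated statement from Karshon's classification theorem \cite[Theorem~4.1]{karshon}, whose conclusion produces an equivariant symplectomorphism inducing the given graph isomorphism but says nothing about the restriction to the minimal component. So the only new content is the freedom to prescribe $h|_{M_{\min}}$ up to isotopy. First I would apply \cite[Theorem~4.1]{karshon} to obtain \emph{some} equivariant symplectomorphism $h_0 \colon M \to \widetilde M$ realizing the isomorphism of decorated graphs. This already matches moment maps (up to the affine identification encoded in the isomorphism) and sends $M_{\min}$ to $\widetilde M_{\min}$; in particular $h_0|_{M_{\min}} \colon M_{\min} \to \widetilde M_{\min}$ is an orientation-preserving diffeomorphism, since $h_0$ is orientation-preserving (symplectomorphism) and the orientations on the minimal components are the symplectic ones, compatibly with $\omega$.

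Next I would correct $h_0$ so that its restriction becomes isotopic to the prescribed $f$. The composite $g := (h_0|_{M_{\min}})^{-1} \circ f \colon M_{\min} \to M_{\min}$ is an orientation-preserving diffeomorphism. There are two cases. If $M_{\min}$ is a point, there is nothing to prove: any two maps of a point are ``isotopic'' and the conclusion is vacuous. If $M_{\min}$ is a fixed surface $\Sigma$, it is extremal and the action is trivial on a neighborhood $U \cong \Sigma \times D^2$ (by the equivariant symplectic neighborhood / local normal form, with the disk carrying the rotation action and $\Sigma$ fixed). The strategy is to extend $g$ to an equivariant symplectomorphism of a neighborhood of $\Sigma$ supported near $\Sigma$, and then to an equivariant symplectomorphism $G$ of all of $M$ fixing everything outside a small invariant neighborhood of $\Sigma$; then $h := h_0 \circ G$ satisfies $h|_{M_{\min}} = h_0|_{M_{\min}} \circ g = f$, which is certainly isotopic to $f$. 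To build $G$: choose an isotopy $g_t$ from $\id_\Sigma$ to $g$ (possible since the orientation-preserving mapping class freedom is irrelevant — we only need $g_t|_{t=1}=g$ and we may pick $g$ itself within its isotopy class, or simply realize the prescribed $g$ by a Hamiltonian/symplectic isotopy on $\Sigma$ after a preliminary symplectic adjustment), and let it act fiberwise on $\Sigma \times D^2$, damped to the identity near the boundary of a slightly larger disk by a cutoff in the radial coordinate; since $S^1$ acts only on the $D^2$ factor by rotation and $g_t$ acts only on the $\Sigma$ factor, the result is equivariant, and one checks it can be made symplectic (or at least an equivariant diffeomorphism, which is all Theorem~\ref{Thm:main} ultimately needs — but for Theorem~\ref{thm:elucid} as stated we want a symplectomorphism, so one uses a Moser-type argument on $\Sigma$ to upgrade the isotopy to a symplectic one before spreading it over the fibers). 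Extending by the identity outside the larger disk gives the desired $G \in \Symp^{S^1}(M,\omega)$, and $h := h_0 \circ G$ still induces the same isomorphism of decorated graphs (since $G$ is supported away from any isotropy sphere or other extremal component and moment-map values are unchanged), completing the argument.

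The main obstacle is the upgrade in the fixed-surface case: making the correcting diffeomorphism genuinely equivariant \emph{and} symplectic while it is supported in an arbitrarily small invariant neighborhood of $\Sigma$. The equivariance is free because the $S^1$-action and the $\Sigma$-direction modification live on orthogonal factors, but one must be careful that (i) the cutoff in the radial direction does not destroy the closedness/nondegeneracy of the form — handled by a fiberwise Moser argument, shrinking the support if necessary — and (ii) the isotopy class of $g$ on $\Sigma$ is actually achievable by a \emph{symplectic} isotopy rel nothing, which follows because $\Symp(\Sigma, \omega|_\Sigma)$ is homotopy equivalent to $\mathrm{Diff}^+(\Sigma)$ in dimension two (Moser), so any orientation-preserving diffeomorphism of $\Sigma$ is isotopic to a symplectic one, and we may simply take that representative as our corrected restriction. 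With these points in hand the construction goes through and $h$ is as required.
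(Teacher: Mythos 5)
Your route is genuinely different from the paper's, and the difference matters. The paper does not invoke Karshon's classification as a black box and then correct: it feeds $f$ in at the start. Concretely, it first isotopes $f$ to a symplectomorphism of the minimal surfaces by Moser (the decorated-graph isomorphism guarantees equal areas), then cites the \emph{proof} of \cite[Proposition 4.3]{karshon} -- Karshon's uniqueness argument, which takes as input precisely a symplectomorphism of the minimal fixed components and extends it over the whole manifold -- to produce an equivariant diffeomorphism $H$ intertwining the moment maps and inducing the graph isomorphism, and finally upgrades $H$ to an equivariant symplectomorphism by a global Moser argument using \cite[Lemma 4.12 and Proposition 4.11]{karshon}. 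So the entire content you are trying to supply by hand (prescribing the restriction to $M_{\min}$) is already built into Karshon's uniqueness proof; the paper's only real work is the bookkeeping of the two Moser steps.

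Your correction step, as written, has two genuine gaps. First, an invariant neighborhood of a fixed surface $\Sigma$ is an invariant disk \emph{bundle} with Euler number equal to the self-intersection of $\Sigma$, not a product $\Sigma\times D^2$; your ``orthogonal factors'' argument for equivariance and for the fiberwise spreading of $g_t$ needs the isotopy to be lifted to a bundle automorphism preserving the connection (this is possible since an orientation-preserving diffeomorphism pulls the bundle back to an isomorphic one, but it is not automatic and changes the form of the symplectic structure via the minimal-coupling term). Second, and more seriously, the radially damped map $(x,z)\mapsto (g_{\rho(|z|)}(x),z)$ is \emph{not} a symplectomorphism even when every $g_t$ is: its differential mixes the base and fiber directions and the pullback form acquires a cross term $d\rho\wedge\beta_{\rho}$, where $\beta_t$ generates the isotopy. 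Repairing this by Moser requires a primitive of that cross term supported in the damping annulus (so the flow does not disturb the identity near the outer boundary or the prescribed map on $\Sigma$) and an equivariant flow; whether such a compactly supported primitive exists depends on the flux of the chosen isotopy and is exactly the kind of issue Karshon's gluing argument is designed to handle. Neither point is fatal, but as stated your ``one checks it can be made symplectic'' is carrying the real weight of the proof. The efficient fix is the paper's: use Karshon's Proposition 4.3 directly rather than only her Theorem 4.1.
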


\begin{proof}
 By Moser's method, the orientation-preserving diffeomorphism $f \colon M_{\min} \to {\widetilde{M}}_{\min}$ is isotopic to a symplectomorphism.
In \cite[proof of Proposition 4.3]{karshon}, Karshon prescribes how to extend such a symplectomorphism  to an equivariant orientation-preserving diffeomorphism 
 $H \colon M \to \widetilde{M}$
 that intertwines the moment maps and induces the given isomorphism on the graphs.
The forms $\omega_t=(1-t)\omega+t \widetilde{\omega}$, for $0\leq t \leq 1$ are all in the same cohomology class, by \cite[Lemma 4.12]{karshon}.
Moreover, $\omega_t$ is nondegenerate for all $t$ \cite[proof of Proposition 4.11]{karshon}. Hence, by Moser's method, the equivariant diffeomorphism $H$ is equivariantly isotopic to an equivariant symplectomorphism  $h \colon M \to \widetilde{M}$.
\end{proof}

Oriented dull graphs are an intermediate step between dull graphs and decorated graphs.

 \begin{definition}
An  {\bf orientation} $\mathcal O$ on a dull graph $\G$ consists of
\begin{itemize}
\item Adding the label  ``mininum" to one of the extremal vertices and
 the label ``maximum" to the other; and 
\item Giving $\G$ the structure of a directed graph such that 
the minimum is not the head of any edge, the maximum is not the tail of any edge, 
and each non-extremal vertex is the head of at most one edge, and the tail of at most one edge.
\end{itemize}
\end{definition}

Given a Hamiltonian $S^1$- manifold $(M,\omega,\Phi)$,
the moment map $\Phi \colon M \to \R$ induces a natural orientation on the associated dull graph $\G$.  We
label the extremal vertices according to whether their moment image is maximal or minimal, and direct each edge so that 
the moment image of the head is greater than the image of the tail.  If a non-extremal isolated fixed point $p$ is 
contained in two isotropy spheres, then the moment image of one sphere lies above $\Phi(p)$ and the 
moment image of the other sphere lies below. 
Hence, this gives an orientation $\mathcal O$ on $\G$ in the sense of the definition above.

As Holm and Kessler prove, one consequence of Theorem~\ref{thm:elucid} is that oriented dull graphs distinguish 
four-dimensional Hamiltonian $S^1$-manifolds up to equivariant diffeomorphism.

\begin{Lemma}{\rm(\cite[Remark~3.5 and Proposition~3.9]{HK}\rm)}\labell{lem:hef} 
Given an orientation-preserving isomorphism between 
the  dull graphs associated to two
compact, connected four-dimensional Hamiltonian $S^1$-manifolds,
there is an orientation-preserving, equivariant diffeomorphism between the manifolds that induces the given isomorphism.
\end{Lemma}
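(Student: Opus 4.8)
The plan is to deduce Lemma~\ref{lem:hef} from Karshon's classification (Theorem~\ref{thm:elucid}) by promoting the given orientation-preserving isomorphism of \emph{dull} graphs to an isomorphism of \emph{decorated} graphs after suitably adjusting the moment maps and the symplectic forms. First I would recall that an orientation-preserving isomorphism of dull graphs matches extremal vertices to extremal vertices in a way compatible with the ``product of weights'' criterion (Remark~\ref{rem:or-pre}), so it respects the distinction between minima and maxima and hence lifts to an isomorphism of \emph{oriented} dull graphs, after possibly composing with the opposite orientation. (If the two manifolds' natural orientations are matched, no swap is needed; otherwise one first observes that reversing the moment map $\Phi \mapsto -\Phi$ on $\widetilde M$ produces a manifold with the opposite-oriented dull graph, and this reversal is realized by an orientation-preserving equivariant diffeomorphism only when available — but since we only require an isomorphism of \emph{un}oriented dull graphs here and are allowed to use the natural orientations, the clean statement is: the natural orientations on the two dull graphs either agree or are opposite under the given isomorphism, and in the latter case we use $-\widetilde\Phi$.)

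Next I would upgrade to decorated graphs. A dull graph isomorphism does not see the moment values or the normalized areas $A$ of fixed surfaces, so in general the two decorated graphs are \emph{not} isomorphic. The key move is that Karshon's decorated graph of a given manifold is not unique as a combinatorial object attached to the \emph{diffeotype}: rescaling the symplectic form $\omega \mapsto c\,\omega$ rescales all moment values and all areas by $c$, and more importantly, for a fixed dull graph there is a connected family of admissible choices of moment values and areas, all realized by the \emph{same} underlying manifold up to equivariant diffeomorphism (this is exactly the content that Karshon's uniqueness is stated for decorated graphs, while Moser-type arguments identify the diffeotype across the family). So I would argue: given the dull graph isomorphism, choose a decorated graph structure on $M$ and a decorated graph structure on $\widetilde M$ with the \emph{same} moment values and areas as dictated by the isomorphism; each such choice is realized by a Hamiltonian $S^1$-manifold equivariantly diffeomorphic to the original via Theorem~\ref{thm:elucid} applied to $M$ with two different decorated-graph data (and similarly for $\widetilde M$). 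Concretely, one deforms $(M,\omega,\Phi)$ through Hamiltonian $S^1$-manifolds whose dull graph is constant but whose decorated graph varies, staying within a single equivariant diffeotype by Moser; this is precisely how \cite[Remark~3.5 and Proposition~3.9]{HK} are set up.

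Then I would conclude by applying Theorem~\ref{thm:elucid} to the two manifolds once their decorated graphs have been arranged to be isomorphic: pick an orientation-preserving diffeomorphism $f\colon M_{\min}\to \widetilde M_{\min}$ between minimal fixed components (either a point, where this is automatic, or a surface of the matching genus, where orientation-preserving diffeomorphisms exist and are unique up to isotopy), and Theorem~\ref{thm:elucid} produces an equivariant symplectomorphism inducing the decorated-graph isomorphism, hence in particular inducing the given dull graph isomorphism. Composing with the equivariant diffeomorphisms from the deformation steps on each side yields the desired orientation-preserving equivariant diffeomorphism between the original $M$ and $\widetilde M$ inducing the given dull graph isomorphism.

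The main obstacle is the upgrading step: one must verify that \emph{every} dull-graph-preserving modification of the decorated data is achievable within a single equivariant diffeotype. This requires knowing that the space of admissible decorated graphs over a fixed dull graph is connected (so one can interpolate), that along such a path the cohomology class of the form and its nondegeneracy are controlled (so Moser applies, as in the proof of Theorem~\ref{thm:elucid}), and that the blow-up/blow-down combinatorics underlying Karshon's construction behave continuously — all of which is the substance of \cite{HK}. I expect the remaining bookkeeping (matching minima to minima, handling the surface-vs-point cases for $M_{\min}$, and tracking orientations through $\Phi \mapsto -\Phi$ when the natural orientations disagree) to be routine once this connectedness is in hand.
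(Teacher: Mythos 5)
The paper does not actually prove this lemma: it is imported wholesale from \cite[Remark~3.5 and Proposition~3.9]{HK}, so there is no in-paper argument to compare against. Your overall strategy --- promote the oriented-dull-graph isomorphism to a decorated-graph isomorphism by adjusting moment values and fixed-surface areas, then invoke Karshon's uniqueness via Theorem~\ref{thm:elucid} --- is the natural route and is consistent with what the citation supplies. But your opening discussion of orientations is off the mark. The hypothesis ``orientation-preserving isomorphism between the dull graphs'' means that the given isomorphism carries the natural orientation induced by $\Phi$ to the natural orientation induced by $\widetilde\Phi$; no composition with the opposite orientation and no replacement $\widetilde\Phi\mapsto-\widetilde\Phi$ is needed, and your parenthetical claim that the natural orientations of two dull-graph-isomorphic manifolds ``either agree or are opposite'' is false in general: they can also differ by partial flips along free chains, which is exactly the phenomenon Sections~\ref{sec:corner} and~\ref{sec:partial} exist to handle. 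For this lemma the orientations are matched by hypothesis, so that branch of your argument is both unnecessary and incorrect.

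The more serious issue is that the core of your proof --- that the admissible decorated graphs over a fixed oriented dull graph form a connected family along which the equivariant diffeotype is constant --- is asserted rather than proved, and you justify it by saying ``this is precisely how \cite[Remark~3.5 and Proposition~3.9]{HK} are set up.'' Since that citation \emph{is} the statement being proved, this is circular. A self-contained argument would need, for instance, Karshon's result that every such manifold is an equivariant blowup of a minimal model, the observation that the oriented dull graph records the blowup combinatorics while the decorated graph additionally records moment levels and blowup sizes, and a concrete verification (radial rescaling of the blowup balls, or a Moser deformation with control on nondegeneracy and on the realizability inequalities for the sizes) that varying these parameters does not change the equivariant diffeotype. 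One minor further slip: orientation-preserving diffeomorphisms of a positive-genus surface are not unique up to isotopy, since the mapping class group is nontrivial; this is harmless here because any choice of $f$ in Theorem~\ref{thm:elucid} suffices, but that freedom is exactly what is exploited later in Lemma~\ref{lem:new eq diff}, so the claim should not be left standing.
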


We say that a connected component $C$ of the set of points with non-trivial stabilizer is  a {\bf free chain}  if the component does not contain either the minimal or maximal fixed component.
A {\bf free point}  is an isolated fixed point that has isotropy weights $1$ and $-1$.
Since a fixed point $p$ is the maximum (resp.\ minimum) of an isotropy $\Z_k$ sphere exactly if one of the isotropy weights at $p$ is $-k$ (resp.\ +$k$), 
every free point is a trivial free chain.
Similarly, let  $\G=(V,E,L)$ be the dull graph associated to the   Hamiltonian $S^1$-action on $M$. By the discussion above,  the dull graph $\G$ has two ``extremal" vertices and each vertex has degree at most two.
A {\bf free chain} of the dull graph $\G$ is a connected component $C$ of $\G$ that does not contain either extremal vertex.
A {\bf free point} is a non-extremal point with no adjacent edges.

Given one  orientation on a dull graph $\G$, there is another orientation on $\G$ given by exchanging the ``minimum" and ``maximum" labels and reversing the direction on  every edge of the graph.  We call this the {\bf opposite orientation}.
Additionally, given a free chain $C$  that is not a free point, there is another orientation obtained by reversing the direction on each  edge in the chain $C$ and leaving all other aspects of the orientation the same. We call this the  {\bf partial flip along $\mathbf C$}.  These operations on orientations are shown in Figure~\ref{fig:orientations}.

\begin{center}
\begin{figure}[h]
\includegraphics[width=15cm]{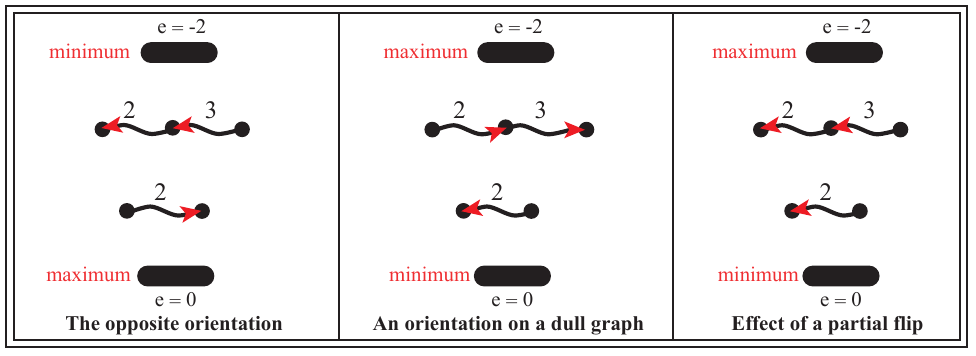} 
\caption[.]{An orientation $\mathcal{O}$ on a dull graph $\G$ is shown in red on the middle
figure.
On the left, we see the opposite orientation; on the right, we see the partial flip on the two-edge chain.
}
\label{fig:orientations}
 \end{figure}
\end{center}

It is straight forward to see that any two orientations on $\G$ with the same minimum and maximal labels differ by some number of partial flips.
We record this in the following lemma.

\newpage

\begin{Lemma}
Any two orientations on a dull graph differ by
\begin{itemize}
\item possibly taking the opposite orientation; and then
\item applying finitely many partial flips.
\end{itemize}
\end{Lemma}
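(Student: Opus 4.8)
The plan is to fix the minimum/maximum labels and argue that any two orientations agreeing on these labels differ by finitely many partial flips; the opposite-orientation step at the start then handles the remaining freedom. So suppose $\mathcal{O}$ and $\mathcal{O}'$ are two orientations on $\G$ with the same ``minimum'' and ``maximum'' labels. Since the dull graph $\G$ has exactly two extremal vertices and every vertex has degree at most two, every connected component of $\G$ is either a single vertex, a path, or a cycle; moreover at most two components contain an extremal vertex, and a component containing an extremal vertex is a path with that vertex as an endpoint (a cycle cannot contain an extremal vertex, since the minimum is the head of no edge and the maximum is the tail of no edge, forcing inconsistent orientations around a cycle — in fact the structure theory here rules cycles out entirely, but we do not even need that).

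The key observation is that on any path component, once we decide which endpoint is ``lower'' the orientation of every edge is forced: for a path $v_0 - v_1 - \cdots - v_n$, the condition that each non-extremal vertex is the head of at most one edge and the tail of at most one edge means the edges must be consistently directed along the path, i.e.\ all point from $v_0$ toward $v_n$ or all point from $v_n$ toward $v_0$. Hence on each path component there are exactly two admissible edge-orientations, interchanged by reversing every edge of that component. For a component that is a free chain (contains no extremal vertex) and is not a single free point, reversing all its edges is precisely the partial flip along that chain. For the (at most two) path components containing an extremal vertex, the orientation of the edges is determined by the requirement that the extremal endpoint is a source (for the minimum) or a sink (for the maximum); since $\mathcal{O}$ and $\mathcal{O}'$ assign the same extremal labels, they must agree on these components. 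Isolated-vertex components carry no edge data. Therefore $\mathcal{O}$ and $\mathcal{O}'$ can differ only on free-chain components, and on each such chain that is not a free point they either agree or differ by the partial flip along it; composing the partial flips along exactly those free chains where they disagree turns $\mathcal{O}$ into $\mathcal{O}'$. Since $\G$ is finite there are finitely many such chains, so finitely many partial flips suffice.

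Finally, if $\mathcal{O}$ and $\mathcal{O}'$ have the opposite extremal labels, first apply the opposite-orientation operation to $\mathcal{O}$ (which swaps the labels and reverses every edge), producing an orientation $\mathcal{O}''$ with the same extremal labels as $\mathcal{O}'$, and then apply the previous paragraph to $\mathcal{O}''$ and $\mathcal{O}'$. This yields the claimed decomposition. The only point that requires a little care — and the step I expect to be the main (minor) obstacle — is verifying that partial flips along distinct free chains commute and genuinely compose to the desired orientation without interfering, but this is immediate since distinct free chains are disjoint as subgraphs and a partial flip alters only the edge directions within its own chain, leaving every other edge, and all the vertex labels, untouched.
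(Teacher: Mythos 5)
Your argument is correct and is exactly the ``straightforward'' verification the paper omits: fix the extremal labels (after possibly taking the opposite orientation), observe that the degree-at-most-two condition forces a consistent direction along each path component, note that the labels determine the orientation on any component containing an extremal vertex, and conclude that the two orientations can differ only by reversing some free chains, i.e.\ by disjoint (hence commuting) partial flips. One tiny inaccuracy: an extremal isolated fixed point can lie on two isotropy spheres, so it need not be an \emph{endpoint} of its path component --- but the forcing argument (all edges at the minimum point away from it, all edges at the maximum point toward it, then propagate) goes through unchanged, so nothing breaks.
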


We aim to show that  two Hamiltonian $S^1$-manifolds are equivariantly diffeomorphic if their oriented dull graphs differ by either of these modifications.  
We first observe that this is true 
when the dull graphs have opposite orientations.

\begin{Lemma} \label{lem:fullflip}
Given a compact, connected four-dimensional Hamiltonian $S^1$-manifold $(M, \omega, \Phi)$, there exists
another compact, connected four-dimensional Hamiltonian $S^1$-manifold $(\widetilde M, \widetilde \omega, \widetilde \Phi)$ with isomorphic dull graph
and an orientation-preserving, equivariant diffeomorphism from $M$ to $\widetilde M$ that induces an orientation-reversing isomorphism on the associated dull graphs.
\end{Lemma}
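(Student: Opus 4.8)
The plan is to realize the opposite orientation of a dull graph as the dull graph of the same underlying manifold equipped with the reversed circle action, and then to invoke Karshon's classification (Theorem~\ref{thm:elucid}) to repackage that as an orientation-preserving equivariant diffeomorphism between $M$ and a \emph{different} manifold $\widetilde M$ whose dull graph is isomorphic to that of $M$. Concretely: given $(M,\omega,\Phi)$, consider the same symplectic manifold $(M,\omega)$ but with the $S^1$-action precomposed with the inversion $t \mapsto t^{-1}$; call the new action $\rho'$. Then $-\Phi$ is a moment map for $\rho'$, so $(M,\omega,-\Phi)$ is again a compact, connected four-dimensional Hamiltonian $S^1$-manifold. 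Since the roles of minimum and maximum are exchanged, the weights at every fixed point change sign, and the index of each fixed component changes accordingly, the decorated graph of $(M,\omega,-\Phi)$ is obtained from that of $(M,\omega,\Phi)$ by the natural ``flip'' (negating moment values, reflecting the graph vertically), and in particular its oriented dull graph is exactly $(\G,\ol{\mathcal O})$, the opposite orientation.

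Next I would produce $\widetilde M$. The manifold $(M,\omega,-\Phi)$ has oriented dull graph $(\G,\ol{\mathcal O})$; its \emph{unoriented} dull graph is still $\G$, isomorphic to that of $M$. Set $\widetilde M := (M,\omega,-\Phi)$ with action $\rho'$. The identity map $\id \colon M \to \widetilde M$ is a diffeomorphism, and it is equivariant if we remember that the two sides carry the two different (mutually inverse) $S^1$-actions --- equivalently, it intertwines $\rho$ with $\rho'$ after the automorphism $t\mapsto t^{-1}$ of $S^1$, which is the standard convention for ``equivariant diffeomorphism'' when one allows an automorphism of the group; but to stay within the paper's conventions (where equivariant means strictly intertwining the actions) I would instead apply Lemma~\ref{lem:hef} or Theorem~\ref{thm:elucid}: there is an orientation-preserving equivariant diffeomorphism $M \to \widetilde M$ inducing \emph{some} isomorphism of dull graphs. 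The point is that on $M$ the natural orientation of $\G$ is $\mathcal O$, while on $\widetilde M$ it is $\ol{\mathcal O}$, so this diffeomorphism, read through the two natural orientations, induces an \emph{orientation-reversing} isomorphism of the associated (naturally oriented) dull graphs. That is exactly the assertion of the lemma.

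The step requiring the most care is the bookkeeping of orientations: one must check that passing from $(M,\omega,\Phi)$ to $(M,\omega,-\Phi)$ with the inverted action indeed \emph{preserves} the orientation of $M$ induced by $\omega$ (it does --- the symplectic orientation $\omega^{\frac12\dim M}$ is untouched, since we did not change $\omega$) while \emph{reversing} the natural orientation on the dull graph (it does --- min and max swap, edge directions reverse). One must also confirm that $(M,\omega,-\Phi)$ satisfies the standing hypotheses: it is again a Hamiltonian $S^1$-manifold with isolated extrema by Atiyah's theorem, so its dull graph and natural orientation are defined. Finally, to get the diffeomorphism to \emph{induce a specified} orientation-reversing isomorphism, I would appeal directly to Theorem~\ref{thm:elucid} (or Lemma~\ref{lem:hef} applied with the opposite orientations): an isomorphism between the decorated graphs of $(M,\omega,\Phi)$ and $(M,\omega,-\Phi)$ --- namely the one realizing the vertical flip --- lifts to an orientation-preserving equivariant symplectomorphism $h \colon M \to \widetilde M$ inducing it, and this $h$ visibly induces the opposite-orientation isomorphism on the naturally oriented dull graphs. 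No genuinely hard estimate or construction is needed here; the content is entirely in setting up the sign conventions so that ``orientation-preserving as a map of manifolds'' and ``orientation-reversing as a map of dull graphs'' coexist without contradiction.
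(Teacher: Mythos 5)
Your construction of $\widetilde M$ reverses the \emph{action} while keeping $\omega$, and you correctly notice that the identity map then intertwines the two actions only up to the automorphism $t\mapsto t^{-1}$ of $S^1$, so it is not equivariant in the paper's (strict) sense. The patch you propose for this does not work: Lemma~\ref{lem:hef} requires an \emph{orientation-preserving} isomorphism of the oriented dull graphs, and Theorem~\ref{thm:elucid} requires an isomorphism of \emph{decorated} graphs (which must match moment-map labels). Between $(\G,\mathcal O)$ and $(\G,\overline{\mathcal O})$ there is in general no orientation-preserving isomorphism (e.g.\ when the minimum is an isolated point and the maximum is a surface, or when the chains hanging off the two extrema differ), and the ``vertical flip'' is not a decorated-graph isomorphism since it negates all moment values. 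So neither cited result produces the strictly equivariant diffeomorphism $M\to\widetilde M$ you need; asserting that one exists because the unoriented dull graphs agree is exactly Corollary~\ref{cor:main}, whose proof depends on Theorem~\ref{Thm:main} and hence on this very lemma --- the argument would be circular.

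The fix is to negate the symplectic form as well: take $\widetilde M=(M,-\omega,-\Phi)$ with the \emph{same} $S^1$-action (which is Hamiltonian for $-\omega$ with moment map $-\Phi$). Then the identity map is literally equivariant, it is orientation-preserving because $(-\omega)^2=\omega^2$ in dimension four, and it reverses the natural orientation on the dull graph since minimum and maximum swap and all edges reverse under $\Phi\mapsto-\Phi$. This is the paper's one-line proof; your orientation bookkeeping in the last paragraph is essentially right, but it must be attached to this construction rather than to the inverted action.
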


\begin{proof} Let $(\widetilde M, \widetilde \omega, \widetilde \Phi)$ be $(M, - \omega, - \Phi)$ and take the identity map.
\end{proof}

\noindent In the next two sections, we will show that we have an equivariant diffeomorphism between four-dimensional Hamiltonian $S^1$-manifolds if their oriented dull graphs differ by a partial flip along a free chain.

 It is well-known that the diffeomorphism type of a compact, 
 connected four-dimensional
Hamiltonian $S^1$-manifold $M$ is determined by 
the cohomology ring $H^*(M;\Z)$.  For the reader's convenience, 
we conclude this section with an explicit proof,
making use of Lemma~\ref{lem:hef}.

\begin{Proposition} \label{prop:diff}
Let $M$ and $\widetilde M$ be compact, connected, four-dimensional Hamiltonian $S^1$-manifolds. Then $M$ and $\widetilde M$ are diffeomorphic if and only if the cohomology rings $H^*(M;\Z)$ and $H^*(\widetilde M;\Z)$ are isomorphic.
In this case, we can choose the diffeomorphism to be orientation-preserving.
\end{Proposition}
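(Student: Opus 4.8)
The plan is to reduce the diffeomorphism classification to a classification of the underlying smooth four-manifolds, for which the cohomology ring is a complete invariant. First I would recall the structure theory: by Karshon's classification (via Theorem~\ref{thm:elucid}), every compact, connected, four-dimensional Hamiltonian $S^1$-manifold $M$ is obtained from a minimal model---either $\CP^2$, a Hirzebruch surface (an $S^2$-bundle over $S^2$), or a ruled surface $S^2 \times \Sigma_g$ (possibly twisted) over a surface of genus $g$---by a sequence of equivariant symplectic blowups. Forgetting the $S^1$-action, $M$ is therefore diffeomorphic to a rational or ruled surface blown up at finitely many points; concretely, $M$ is diffeomorphic to one of: $\CP^2 \# n\overline{\CP^2}$, $(S^2 \times S^2) \# n \overline{\CP^2}$, or an $S^2$-bundle over $\Sigma_g$ blown up $n$ times. (Recall $S^2 \times S^2$ and $\CP^2 \# \overline{\CP^2}$ become diffeomorphic after one further blowup, so the two Hirzebruch families merge, and similarly for the two $S^2$-bundles over $\Sigma_g$.)

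Next I would read off from the cohomology ring $H^*(M;\Z)$ enough invariants to pin down which diffeomorphism class on the above list $M$ belongs to. The Betti number $b_1$ recovers the genus $g$ of the base (it is $0$ in the rational case and $2g$ in the ruled case); the Betti number $b_2$ then recovers $n$ (up to the small ambiguity of which minimal model, which the intersection form resolves). The intersection form on $H^2(M;\Z)$---its rank, signature, and type (even or odd)---distinguishes $\CP^2 \# n\overline{\CP^2}$ from $(S^2\times S^2)\# n\overline{\CP^2}$ (odd vs.\ even when $n=0$), and similarly in the ruled case. So an isomorphism of cohomology rings forces $M$ and $\widetilde M$ to have the same $g$, the same $n$, and the same minimal model type, hence to be diffeomorphic. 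For the orientation-preserving refinement: when the intersection form is indefinite and odd, or even, Wall's theorem and the classification of simply connected (or, in the ruled case, these specific non-simply-connected) four-manifolds gives that an isomorphism of intersection forms is realized by an orientation-preserving diffeomorphism; the only subtlety is $\CP^2$ itself (or a minimal ruled surface), where the intersection form is definite or otherwise too rigid to admit an orientation-reversing self-isomorphism unless $b_2 = 2$ (matching the earlier Remark), so we can always arrange the diffeomorphism to be orientation-preserving by composing with an orientation-reversing self-diffeomorphism when one exists and is needed, and observing it is never forced the other way.

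An alternative, cleaner route---and the one I would actually carry out using the tools in the excerpt---avoids invoking the full smooth classification of four-manifolds directly on $M$ and instead routes through the dull graph. The idea: a compact, connected, four-dimensional Hamiltonian $S^1$-manifold is, as a smooth manifold, determined by its dull graph together with the choice of orientation, and in fact the underlying smooth type depends only on rather coarse data. But we cannot extract the dull graph from $H^*(M;\Z)$ alone (that needs the \emph{equivariant} cohomology, Theorem~\ref{thm:strong}'s input). So here the honest approach is: from $H^*(M;\Z) \cong H^*(\widetilde M;\Z)$ deduce the two manifolds lie in the same family and have the same $(g,n)$ as above, then exhibit \emph{some} Hamiltonian $S^1$-action on a representative of that diffeomorphism type and use Lemma~\ref{lem:hef} to compare---concretely, $M$ is diffeomorphic to a toric (hence Hamiltonian $S^1$) manifold $M_0$ in its family, $\widetilde M$ is diffeomorphic to a toric $\widetilde M_0$ in its family, and since the families and $(g,n)$ coincide we may take $M_0 = \widetilde M_0$, giving $M \cong M_0 = \widetilde M_0 \cong \widetilde M$ as smooth manifolds, orientation-preservingly after the adjustment above.

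The main obstacle, and the step deserving the most care, is the orientation-preserving clause together with the minimal-model bookkeeping: one must check that the cohomology ring genuinely distinguishes the finitely many cases (in particular that no two non-diffeomorphic manifolds on the list share an intersection form and $b_1$, using parity/signature) and that an abstract isomorphism of intersection forms is always realized by an \emph{orientation-preserving} diffeomorphism in each case---invoking Freedman/Wall in the simply connected cases and the corresponding classification of ruled-surface-type four-manifolds (which are geometrically ruled or their blowups, hence well understood) for $b_1 \neq 0$. Everything else is routine once the list of possible diffeomorphism types is in hand.
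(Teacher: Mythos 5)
Your proposal follows essentially the same route as the paper: Karshon's structure theory puts $M$ on an explicit finite list (blowups of $\CP^2$ or of $S^2$-bundles over a compact oriented surface $\Sigma$), and $\rank H^1$, $\rank H^2$, and the parity of the intersection lattice, all read off from the ring isomorphism, pin down the entry on that list; the paper likewise uses the fact that the two oriented $S^2$-bundles over $\Sigma$ become orientation-preservingly diffeomorphic after one blowup, which it proves equivariantly via Lemma~\ref{lem:hef}, much as in your ``toric representative'' alternative. The one place your justification goes astray is the orientation-preserving clause: you invoke Wall and Freedman to realize an isomorphism of intersection forms by an orientation-preserving diffeomorphism, but Freedman's theorem only produces homeomorphisms, and no realization statement is needed here --- the proposition asks only for the \emph{existence} of some orientation-preserving diffeomorphism, which follows because two manifolds with the same $\rank H^1$, $\rank H^2$, and parity are literally the same oriented manifold on the list (namely $\CP^2$, one of the two bundles, or the blowup of $S^2\times\Sigma$ at $n$ points). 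Dropping the Wall/Freedman appeal and keeping your ``same list entry'' argument yields exactly the paper's proof.
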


\begin{proof}
By \cite[Theorem~6.3  and Lemma~6.15]{karshon},
every compact, connected, four-dimensional Hamiltonian $S^1$-manifold can either be obtained as a blowup of $\CP^2$ or a blowup of an $S^2$ bundle over a compact, oriented surface $\Sigma$. 
We will proceed by analyzing the possibilities depending on the rank of $H^2(M;\Z)$.
If $H^2(M;\Z) = \Z$, then $M$ is diffeomorphic to $\CP^2$.  

Proceeding, we may assume that $\rank H^2(M;\Z) > 1$.  Since the blow up of $\CP^2$ is 
an $S^2$-bundle over a sphere, this implies that $M$ can be obtained as a blowup of 
an $S^2$-bundle over a compact, oriented surface $\Sigma$. The genus $g$ of the surface 
is $\frac{1}{2} \rank (H^1(M;\Z))$.  
If $\rank(H^2(M;\Z)) = 2$, then $M$ is diffeomorphic to an $S^2$-bundle over $\Sigma$. 
There are exactly two oriented $S^2$-bundles over a compact, oriented surface $\Sigma$. 
If $H^2(M;\Z)$ with the intersection form is an even lattice, then $M$ must be the 
trivial bundle $S^2 \times \Sigma$, and if $H^2(M;\Z)$ is an odd lattice, then $M$ must 
be the non-trivial bundle.

Finally, the one-fold blowups of the trivial and non-trivial oriented $S^2$ bundles 
over $\Sigma$ are diffeomorphic by an orientation-preserving diffeomorphism.  
This can be seen, for example, by considering a circle action with fixed set $\Sigma \sqcup \Sigma$, blowing up at a fixed point, and applying  Lemma~\ref{lem:hef}.  Therefore, if $\rank(H^2(M;\Z)) \geq 3$, then $M$ is a blowup of $S^2 \times \Sigma$ at $(\rank(H^2(M;\Z)) - 2)$ points.  

We have thus shown that $\rank(H^1(M;\Z))$, $\rank(H^2(M;\Z))$, and the intersection form on 
$H^2(M;\Z)$ together determine $M$ up to orientation-preserving diffeomorphism.  The other implication is immediate: diffeomorphic
manifolds have isomorphic cohomology rings.
\end{proof}

\section{Corner manifolds}\labell{sec:corner}

 In this section we use the fact that a free chain of isotropy spheres 
can be  equivariantly blown down to a free point
to identify 
 neighborhoods of  chains with smooth symplectic toric manifolds. We then explore the symplectic analogue of holomorphic blowup maps on such toric manifolds.
 We begin by introducing  the symplectic toric manifolds that arise, 
which we call corner manifolds.

\begin{Definition}
A \emph{corner manifold} $N$ is a connected four-dimensional symplectic manifold $N$ with a proper moment map $\Phi \colon N \to \R^2$ such that the
moment image $\Delta := \Phi(N)$ is contained in the positive quadrant $\left(\R_{\geq 0} \right)^2$ and contains all $(w_1,w_2) \in \left(\R_{\geq 0}\right)^2 $ such that $w_1+w_2$ is sufficiently large.
\end{Definition}

In particular,  $\C^2$ with the standard action and symplectic form $(S^1)^2 \acts (\C^2, \omega_0)$, and with the homogeneous moment map,
is itself a corner manifold.

\vskip 0.1in
 \begin{center}
\begin{figure}[h]
\begin{overpic}[
scale=1,unit=1mm]{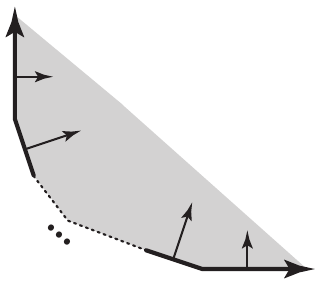} 
   \put(10,35){{\small{$v_1$}}}
   \put(15,25){{\small{$v_2$}}}
   \put(32,15){{\small{$v_{{n-1}}$}}}
   \put(41,11){{\small{$v_n$}}}
\end{overpic} 
\caption[.]{The  polyhedral set $\Delta$ with the labeled, inward-pointing normal vectors indicated. 
}
\label{fig:corner}
 \end{figure}
 \end{center}

\vskip 0.1in

 The moment image of a corner manifold is a Delzant convex polyhedral set
$$\Delta = \bigcap_{i=1}^n \{ w \in \R^2 \mid \langle w, v_i \rangle \geq a_i \},$$ 
where $v_1,\ldots,v_{n} \in \Z^2$ are the primitive inward-pointing  normal vectors to the edges of 
 $\Delta$  and $a_1,\dots,a_n \in \R$.  We will always order the $v_i$'s counter-clockwise and set  $v_1=(1,0)$, $v_{n}=(0,1)$, and  $a_1=0=a_n$.  We will assume that each $v_i$ defines a non-trivial edge of $\Delta$: that is, the set $\{ w \in \R^2 \mid \langle w, v_i \rangle = a_i \}$ is a closed, one-dimensional subset of $\R^2$. 
 Here, a rational compact polyhedral set in $\R^2$ is \emph{Delzant} if the primitive inward-pointing normal vectors to each pair of adjacent edges form a basis of $\Z^2$.
For each $i$ such that $1 < i < n$, the
set 
\begin{equation}\label{eq:Si1}
S_i := \{p \in N \,|\, \langle \Phi(p), v_i \rangle = a_i \}
\end{equation}
is
a symplectic two-sphere that is fixed by the circle in $(S^1)^2$ generated by the vector $v_i\in \R^2 = \mathrm{Lie}((S^1)^2)$.  
The  {\bf chain of isotropy spheres}  of $N$  is the union $D$ of these $(n-2)$ two-spheres.
Since $v_i \in (\Z_{>0})^2$ for all $1 < i < n$, the chain $D$ is the set of points with non-trivial stabilizer under the anti-diagonal circle action.

\newpage

Conversely, consider a Delzant convex polyhedral set
$$\Delta := \bigcap_{i=1}^n\  \{ w \in \R^2 \mid \langle w, v_i \rangle \geq a_i \} \  ,$$
where $v_1,\ldots,v_{n} \in \Z^2$ are  the primitive inward-pointing  normal vectors to $\Delta$, ordered
counter-clockwise and $a_i \in \R$ for all $i$. Moreover, assume that $v_1=(1,0)$, $v_{n}=(0,1)$, and  $a_1=0=a_n$. 
Set $v_i=(\alpha_i,\beta_i)$.
Note that $\Delta$ is contained in the positive quadrant $\left(\R_{\geq 0} \right)^2$ and contains all $(w_1,w_2) \in \left(\R_{\geq 0}\right)^2 $ such that $w_1+ w_2$ is sufficiently large.
Following Delzant \cite{De},   we construct a
corner manifold $(M_\Delta,\omega_\Delta,\Phi_\Delta)$,
with moment image $\Delta$,
by symplectic reduction.  
Consider the standard $(S^1)^n$-action on $(\C^n, \omega_0)$ with moment map $\phi \colon \C^n \to \R^n$ sending $(x_1,\dots,x_n)$ to $\frac{1}{2}(|x_1|^2,\dots,|x_n|^2)$.
Let $K=K_{\Delta}$ be the kernel of the map $(S^1)^{n} \to (S^1)^2$ induced by the map
$\pi \colon \R^{n} \to \R^2$ defined by $\pi(e_i)=v_i$,
where $(e_1,\ldots,e_{n})$ is the standard basis of $\R^{n}$.
(We identify $S^1$ with ${{\R} /{\Z}}$.) 
We have dual short exact sequences
\begin{equation}\label{eq:lie-alg}
\begin{array}{c}
\xymatrix{
0\ar[r] &   \fk  \ar[r]^(.45){\iota}  & \R^n  \ar[r]^(.45){\pi}
& \R^2  \ar[r] & 0 &  \mbox{and} \\
0 &   \fk^* \ar[l]   & (\R^n)^* \ar[l]_{\iota^*}  
& (\R^2)^* \ar[l]_(.45){\pi^*} & 0, \ar[l] &
}
\end{array}
\end{equation}
where $\fk$ is the Lie algebra of $K$ and $\iota \colon \fk\to \R^n$ 
is the inclusion map. Note that $\iota(\fk)$ has 
basis $\alpha_ie_1 - e_i + \beta_ie_n$ for $i=2,\dots,n-1$.
Let $M_\Delta$ be symplectic reduction of $\C^n$ by $K$ at $\iota^*(-a_1,\dots,-a_n)$. 
Explicitly,
we have the moment map
$\mu:\C^n\to\fk^*$, which is the composition of $\phi$ and $\iota^*$.
The level set $\mu^{-1}(\iota^*(-a_1,\dots,-a_{n}))$ is
\begin{equation}\label{eq:level}
U_\Delta
= \left\{ x\in \C^n\ \left| \ 
\textstyle{\frac{1}{2}}\left( \alpha_i |x_1|^2 - |x_i|^2+ \beta_i |x_n|^2\right)=a_i \mbox{ for all } 1 < i < n
\right.
\right\},
\end{equation}
and so
\begin{equation}\label{eq:red}
M_\Delta=\mu^{-1}(\iota^*(-a_1,\dots,-a_{n}))/K = U_\Delta/K.
\end{equation}
By \eqref{eq:lie-alg},
we may identify $(S^1)^n/K$ with $(S^1)^2$ and hence interpret the residual Hamiltonian $(S^1)^n/K$-action on $M_\Delta$ as an $(S^1)^2$-action. 
Under this identification, $(S^1)^2$ acts on $M_\Delta$ by
\begin{equation}\label{eq:action}
(\lambda_1,\lambda_2) \cdot [x_1,\dots,x_n] = [\lambda_1 x_1, x_2, \dots, x_{n-1}, \lambda_2 x_n]
\end{equation}
with moment map
\begin{equation}\label{eq:mmap}
[x_1,\dots,x_n] \mapsto \textstyle \frac{1}{2}(|x_1|^2,|x_n|^2),
\end{equation}
and  isotropy spheres
\begin{equation*}
S_i = \left\{ [x] \in M_\Delta \ \Big|\ x_i = 0 \right\} \ \forall \ 1 \leq i \leq n.
\end{equation*}

Moreover, up to $(S^1)^2$-equivariant symplectomorphism,
the manifold $M_\Delta$ is the unique corner 
manifold with moment
image $\Delta$  \cite[Theorem 1.3]{KL:noncpt toric}.
Hence, we may identify any corner manifold with moment polytope $\Delta$ with $M_\Delta$.
As we see in the proof of Lemma~\ref{lem:strip}, 
every corner manifold 
can be obtained from the standard action
$(S^1)^2 \acts (\C^2,\omega_0)$ 
with the homogeneous moment map by a finite sequence 
of $(S^1)^2$-equivariant symplectic blowups.
Alternatively, as a complex manifold, 
it can be obtained by a finite sequence
of holomorphic blowups. 
From this perspective, 
the chain of isotropy spheres $D$  is the preimage 
of the origin $0\in \C^2$ under the composition of holomorphic blowup maps.
Given a corner manifold $(S^1)^2\acts N$, we will often restrict our attention to 
actions  by 
the {\bf anti-diagonal circle} $\{  (\lambda,\lambda^{-1}) :\lambda\in S^1 \}\subset (S^1)^2$ 
and  the
{\bf diagonal circle}   $ \{  (\lambda,\lambda) : \lambda\in S^1 \}\subset (S^1)^2$.

\begin{definition}\labell{def:triangular}
A \emph{triangular neighborhood (of size $\bm{\varepsilon}$)} in $N$  is a neighborhood $U_{\varepsilon}$ of $D$ that is the moment 
preimage in a corner manifold $N$ of the triangle
\begin{equation} \labell{eq:triangle}
\{ (w_1,w_2)\,|\, w_1 \geq 0, \, w_2 \geq 0, \, w_1+w_2 < \varepsilon\}.
\end{equation}
For example, triangular neighborhoods in $\C^2$ are balls. More generally, $U_\varepsilon$  is the preimage of $[0,\varepsilon)$ 
under the moment map for  the diagonal circle action.  
\end{definition}

\begin{Remark}\labell{rem:shrink}
\noindent  Since the chain of isotropy spheres $D$  in a corner manifold $N$ is 
a compact subset and the moment map is continuous, there always exists an 
$\varepsilon >0$ such that 
$U_{\varepsilon}$ is a triangular neighborhood.
Moreover, if $D \subset U_{\varepsilon} \subset N$, for $\varepsilon >0$, then there exists  
$0<\delta<\varepsilon$ such that $D \subset U_{\delta} \subset N$.
\end{Remark}

Next, we show that corner manifolds serve as local models for neighborhoods of free chains in four-dimensional Hamiltonian $S^1$-manifolds. For that, we need to introduce the notion of a chain blowup. Consider a four-dimensional symplectic manifold $(M,\omega)$ with a Hamiltonian $S^1$-action.
Let $p \in M$ be a non-extremal fixed point with weights $(m_2,-m_1)$, where $m_1, m_2 > 0$. 
For sufficiently small $\varepsilon > 0$, we can perform an {\bf equivariant blowup} of size $\varepsilon$ at $p$. 
Such a blowup is possible if and only if there is an equivariant symplectic embedding 
of an open ball $B_r$ about the origin in $\C^2$ of radius $r$, where $\frac{r^2}{2} > \varepsilon$, and
 $S^1$ acts on $B_r$ by 
$\lambda \cdot (z_1,z_2) = ( \lambda^{m_1} z_1, \lambda^{m_2} z_2)$.
In the blowup, the fixed point $p$ is replaced by two isolated fixed points connected by an isotropy sphere of area $\varepsilon$ with stabilizer $m_1 + m_2$, as indicated in the figure below.

\begin{center}
\begin{figure}[h]
\includegraphics[width=4cm]{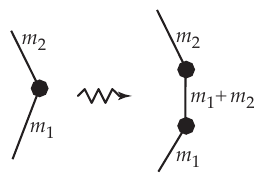} 
\caption[.]{The effect of a  blowup at a non-extremal fixed point.}
\label{fig:blowup}
 \end{figure}
\end{center}

\noindent Note that the isotropy sphere created in the equivariant blowup procedure is an exceptional sphere: it has self-intersection $-1$.  
The blowup procedure
can be reversed: any isotropy sphere of self-intersection $-1$ can be equivariantly {\bf blown down} and replaced with an isolated 
fixed point.

A \emph{chain blowup} is a  sequence $M^0,\ldots,M^k$ of Hamiltonian $S^1$-manifolds together with a 
 free chain $C^j$ in $M^j$, for each $j$, satisfying
\begin{itemize}
\item $M^{j+1}$ is obtained from $M^{j}$ by an $S^1$-equivariant symplectic blowup 
at a fixed point $v^{j}$ in $C^{j}$; and
\item $C^{j+1}$ is the free chain that contains the newly created exceptional sphere.
\end{itemize}
By a slight abuse of terminology, when such a sequence of blowups exists, we call $(M^k, C^k)$ a {\bf chain blowup} of $(M^0, C^0)$.
As we show next, every free chain comes from  a chain blowup of a free point.

\begin{Lemma}\labell{lem:ex}
Let $M$ be a compact four-dimensional Hamiltonian $S^1$-manifold and $C$ a free chain in $M$.
Then there exists a  Hamiltonian $S^1$-manifold $\widecheck{M}$ with a free point $\widecheck{C}$ so that
$(M,C)$ is a chain blowup of $(\widecheck{M},\widecheck{C})$.
\end{Lemma}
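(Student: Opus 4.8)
The plan is to induct on the length of the free chain $C$, i.e., on the number of isotropy spheres it contains. If $C$ is already a free point, take $\widecheck{M} = M$ and $\widecheck{C} = C$, and we are done. Otherwise, the chain $C$ contains at least one isotropy sphere, and the strategy is to locate an isotropy sphere in $C$ of self-intersection $-1$, blow it down, and apply the inductive hypothesis to the resulting manifold and chain.

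The first main step is therefore: \emph{every free chain that is not a free point contains at least one isotropy sphere of self-intersection $-1$.} I would argue this via the corner-manifold local model. By Remark~\ref{rem:shrink} (and the identification preceding Definition~\ref{def:triangular}) a neighborhood of $C$ in $M$ is equivariantly symplectomorphic to a triangular neighborhood $U_\varepsilon$ in the corner manifold $M_\Delta$, where the chain of isotropy spheres $D$ corresponds to $C$ under the anti-diagonal circle action. The spheres $S_i$ ($1 < i < n$) carry inward normals $v_i = (\alpha_i,\beta_i)$ ordered counterclockwise, and the self-intersection of $S_i$ is determined by the Delzant condition: adjacent normals satisfy $v_{i-1} + v_{i+1} = c_i v_i$ for some integer $c_i$, and the self-intersection of $S_i$ equals $-c_i$. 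Since $v_1 = (1,0)$, $v_n = (0,1)$ and all intermediate $v_i$ lie in $(\Z_{>0})^2$, a convexity/lattice argument on the counterclockwise fan shows that not all $c_i$ can be $\geq 2$ — one cannot build a strictly convex chain of primitive lattice vectors from $(1,0)$ to $(0,1)$ with every "turning number" at least $2$ unless the chain is empty. Hence some $c_i = 1$, giving an isotropy sphere of self-intersection $-1$ inside $C$. (Alternatively, one can cite that $M_\Delta$ is built from $(\C^2,\omega_0)$ by a sequence of equivariant blowups, as noted before Definition~\ref{def:triangular}, so the last exceptional sphere created survives as a $(-1)$-sphere in $D$ provided no further blowup occurred at one of its two fixed points; a careful bookkeeping of which fixed point each successive blowup sits over yields such a sphere.)

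The second step is routine once the first is in hand: blowing down a $(-1)$ isotropy sphere $S$ in $C$, as described in the paragraph preceding Lemma~\ref{lem:ex}, produces a compact four-dimensional Hamiltonian $S^1$-manifold $M'$ in which $S$ is replaced by a single non-extremal isolated fixed point. The component $C'$ of the set of points with nontrivial stabilizer containing that new fixed point is again a free chain (it still contains neither extremal fixed component, since blowing down away from the extrema does not touch $M_{\min}$ or $M_{\max}$), and it has one fewer isotropy sphere than $C$. By construction $(M, C)$ is obtained from $(M', C')$ by a single equivariant symplectic blowup at a fixed point of $C'$, so it is a one-step chain blowup of $(M', C')$. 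Applying the inductive hypothesis to $(M', C')$ yields $(\widecheck{M}, \widecheck{C})$ with $\widecheck{C}$ a free point and $(M', C')$ a chain blowup of it; concatenating the blowup sequences shows $(M, C)$ is a chain blowup of $(\widecheck{M}, \widecheck{C})$, completing the induction.

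The main obstacle is the first step — verifying that a free chain always contains a $(-1)$-sphere. The cleanest route is the combinatorial claim about counterclockwise chains of primitive vectors from $(1,0)$ to $(0,1)$ satisfying the Delzant (unimodularity) condition: this is essentially the statement that the corresponding smooth complete-fan-fragment is a sequence of blowups of the first quadrant, which is classical, but it should be spelled out in the language of the $v_i$. Care is also needed to ensure that after blowing down we still have a genuine \emph{Hamiltonian} $S^1$-manifold with a well-defined free chain; this is handled by the reversibility of the blowup procedure recorded in the text, together with the observation that the blowdown takes place inside the triangular neighborhood $U_\varepsilon$ and hence does not interfere with the global structure of $M$.
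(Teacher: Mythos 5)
Your overall strategy --- locate a $(-1)$-sphere in the chain, blow it down, and induct on the length of the chain --- is exactly the paper's, and your blowdown/induction step is handled correctly. The problem is the key step: the existence of a $(-1)$-sphere. You establish it by identifying a neighborhood of $C$ in $M$ with a triangular neighborhood of the chain $D$ in a corner manifold $M_\Delta$, but that identification is not available at this point. In the paper it is the content of Lemma~\ref{lem:new} (and even there only after replacing $M$ by a manifold with isomorphic oriented dull graph), and its proof uses Lemma~\ref{lem:ex}. Likewise your parenthetical alternative --- that every corner manifold is an iterated equivariant blowup of $(\C^2,\omega_0)$ --- is established in the proof of Lemma~\ref{lem:strip} precisely by ``applying the argument in the proof of Lemma~\ref{lem:ex}.'' Remark~\ref{rem:shrink} and the discussion preceding Definition~\ref{def:triangular} concern corner manifolds only and do not supply a toric local model for an arbitrary free chain in $M$, so as written your argument is circular relative to the paper's logical structure.

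The gap is repairable without any toric model, and this is what the paper does. Label the spheres of the chain $e_1,\dots,e_n$ with isotropy weights $m_1,\dots,m_n$, and set $m_0=m_{n+1}=1$: the negative weight at the minimal fixed point of a free chain must be $-1$, since a weight $-k$ with $k\geq 2$ would make that point the maximum of a further isotropy sphere in the same connected component. Effectiveness forces adjacent weights to be coprime, so the largest weight $m_j$ satisfies $m_j>m_{j-1}$ and $m_j>m_{j+1}$, whence $-(m_{j-1}+m_{j+1})/m_j$ lies strictly between $-2$ and $0$; by Karshon's formula this quantity is the self-intersection of $e_j$, an integer, hence equal to $-1$. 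Your convexity argument on the normals $v_i$ and the integers $c_i$ is essentially the same maximum principle, but phrased in data you only have access to after the local model --- and hence this lemma --- is already established.
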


\begin{proof} 
It is enough to show that every non-trivial free chain of isotropy spheres contains an 
isotropy sphere with self-intersection $-1$. Since such a sphere can be blown down, the theorem will then  follow by
induction on the number of fixed points in the chain $C$.

If $C$ is not a free point, label the isotropy spheres in the chain $C$ as $e_1, \dots, e_n$, so that the maximal fixed point of $e_i$ is the minimal fixed point of $e_{i+1}$ for all $i$.  Let $m_i$ be the isotropy weight of $e_i$ for all $1 \leq i \leq n$. Since the negative weight at the minimal fixed point in $C$ is $-1$, we  set $m_0 = 1$; similarly, we set $m_{n+1} = 1.$
Since the action is effective, $m_i$ and $m_{i+1}$ are relatively prime for all $i$.
Hence, if $m_j$ is the largest weight in the chain,
then $m_j > m_{j-1}$ and $m_j > m_{j+1}.$
Since the $m_j$'s are positive, this implies that
$$ 0 > - \frac{m_{j-1} + m_{j+1}}{m_j} > - 2.$$
By \cite[Proof of Lemma~5.2]{karshon},
the middle term in this string of inequalities
is the self intersection of $e_j$,
which must be an integer. Therefore, the sphere has self-intersection equal to $-1$. 
\end{proof}

After possibly replacing our four-manifold by another with isomorphic oriented dull graph,  every free chain admits an invariant neighborhood that is isomorphic to a triangular neighborhood in a corner manifold, equipped with the anti-diagonal circle action.

\begin{Lemma} \labell{lem:new}
Let $(M,\omega,\Phi)$ be a compact, connected four-dimensional Hamiltonian $S^1$-manifold and $C$ a free chain in $M$.
There exists a compact, connected four-dimensional Hamiltonian $S^1$-manifold $\widetilde{M}$ whose
oriented dull graph is isomorphic to that of $M$,
and an $S^1$-equivariant symplectomorphism 
from a triangular neighborhood
in a corner manifold, equipped 
with the anti-diagonal circle action,
to a neighborhood of the free chain in
$\widetilde{M}$ corresponding to $C$.
\end{Lemma}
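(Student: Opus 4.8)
The plan is to combine Lemma~\ref{lem:ex} with Karshon's classification (Theorem~\ref{thm:elucid}) and the Delzant-type construction of corner manifolds. First I would apply Lemma~\ref{lem:ex} to the free chain $C$ in $M$: there is a Hamiltonian $S^1$-manifold $\widecheck M$ containing a free point $\widecheck C$ such that $(M,C)$ is a chain blowup of $(\widecheck M, \widecheck C)$. Unwinding the definition of a chain blowup, this means $M$ is obtained from $\widecheck M$ by a finite sequence of equivariant symplectic blowups, all performed at (non-extremal) fixed points lying in the successive free chains, starting from the free point $\widecheck C$. Record the weights $(m_2, -m_1)$ of $\widecheck C$ and the sizes $\varepsilon_1, \dots, \varepsilon_k$ of the blowups; these are all the combinatorial data we will need.

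Next I would build the candidate corner manifold directly on the toric side. A free point with weights $(m_2,-m_1)$, sitting in a four-manifold near which the $S^1$-action looks like $\lambda\cdot(z_1,z_2) = (\lambda^{m_1}z_1, \lambda^{m_2}z_2)$ on a ball, is modeled (after rescaling to the anti-diagonal circle) by a small ball in $\C^2$ with the standard $(S^1)^2$-action --- a corner manifold with triangular moment image. Performing the equivariant blowups on the $S^1$-side corresponds, on the corner-manifold side, to the toric blowups described after Figure~\ref{fig:corner}: each blowup at a fixed point in the free chain corresponds to cutting a corner of the Delzant polyhedral set $\Delta$, and by the uniqueness statement \cite[Theorem 1.3]{KL:noncpt toric} the resulting corner manifold $N$ is determined up to $(S^1)^2$-equivariant symplectomorphism by the sequence of corner cuts (i.e.\ by the weights and sizes). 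Restricting to the anti-diagonal circle, the chain of isotropy spheres $D$ of $N$ is precisely the chain of isotropy spheres created by the sequence of blowups, with matching self-intersection numbers and stabilizers. Choosing $\varepsilon$ small enough (Remark~\ref{rem:shrink}), the triangular neighborhood $U_\varepsilon \subset N$ is an invariant neighborhood of $D$.

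Then I would produce $\widetilde M$ and the symplectomorphism. The point of allowing $\widetilde M$ in place of $M$ is that the blowup sizes $\varepsilon_1,\dots,\varepsilon_k$ occurring in the chain blowup of $M$ may be too large, or otherwise incompatible with the sizes realized inside $N$; however, changing the sizes of blowups along a free chain (keeping everything else fixed) changes only the areas of the isotropy spheres and the moment values, not the oriented dull graph. Concretely, I would take $\widetilde M$ to be the manifold obtained from $\widecheck M$ by the same sequence of equivariant blowups but with sufficiently small sizes --- invoking the elementary fact (used implicitly throughout, and following from Karshon's classification via decorated graphs) that such a manifold exists and has the same dull graph, with the same natural orientation, as $M$. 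Then near the free chain $\widetilde C$ of $\widetilde M$ corresponding to $C$, the decorated graph of a neighborhood matches, by construction, the decorated graph of the triangular neighborhood $U_\varepsilon$ in $N$ with the anti-diagonal action; Theorem~\ref{thm:elucid} (applied equivariantly, or its local/neighborhood version, after matching minimal fixed components) then yields the desired $S^1$-equivariant symplectomorphism from $U_\varepsilon$ onto a neighborhood of $\widetilde C$.

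The main obstacle I expect is the bookkeeping that identifies the $S^1$-equivariant blowup sequence producing $M$ from $\widecheck M$ with an $(S^1)^2$-equivariant (toric) blowup sequence producing $N$ from $\C^2$ --- that is, checking that the anti-diagonal circle inside the corner manifold $(S^1)^2\acts N$ genuinely recovers the original $S^1$-action on the neighborhood of the chain, with correct weights, stabilizers, and self-intersections at every stage, and that the ``sufficiently small $\varepsilon$'' can be chosen uniformly so that $U_\varepsilon$ embeds. The uniqueness theorem \cite[Theorem 1.3]{KL:noncpt toric} for corner manifolds and the local normal form for equivariant blowups do most of the work, but assembling them into a single equivariant symplectomorphism, and verifying that replacing $M$ by $\widetilde M$ (only shrinking blowup sizes) is both necessary and harmless, is the delicate part. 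The genus and intersection-form considerations are irrelevant here since a free chain lies in the interior of the moment image, away from the extremal fixed components.
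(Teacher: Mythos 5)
Your overall strategy is the paper's: blow the free chain down to a free point via Lemma~\ref{lem:ex}, use the local normal form at that free point to see a ball in $\C^2$ with the standard $(S^1)^2$-action extending the given $S^1$-action anti-diagonally, redo the blowups $(S^1)^2$-equivariantly with sufficiently small sizes (in the same combinatorial positions along the chain, so the oriented dull graph is unchanged), and accept that the resulting $\widetilde M$ may differ from $M$ only in the areas of the isotropy spheres. All of that matches.

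The gap is in your final step, where you construct the corner manifold $N$ abstractly on the toric side and then try to produce the $S^1$-equivariant symplectomorphism from $U_\varepsilon\subset N$ onto a neighborhood of $\widetilde C$ by invoking Theorem~\ref{thm:elucid} ``applied equivariantly, or its local/neighborhood version.'' No such local version is available: Karshon's classification by decorated graphs is a statement about compact manifolds, and a decorated graph of an open invariant neighborhood is not even defined; the Karshon--Lerman uniqueness you cite applies to toric manifolds, whereas a neighborhood of $\widetilde C$ in $\widetilde M$ is a priori only an $S^1$-space --- knowing that it carries a compatible second circle action is precisely what needs to be established. The paper closes this circle by construction rather than by classification: it performs the small $(S^1)^2$-equivariant blowups \emph{inside} the local-normal-form ball $U\subset M^0$, obtaining $\widetilde U=U^k$, and then defines $\widetilde M$ by excising $U$ from $M^0$'s blowup and gluing in $\widetilde U$ with its anti-diagonal circle action. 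Since each $(S^1)^2$-blowup of a triangular neighborhood in a corner manifold is again a triangular neighborhood in a corner manifold, the required symplectomorphism is then simply the inclusion of $\widetilde U$ into $\widetilde M$, and no appeal to a classification theorem is needed. Your argument becomes correct once you replace the appeal to a local Karshon theorem with this gluing construction.
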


\begin{proof} 
By Lemma~\ref{lem:ex}, there exists a chain blowup $M^0,\dots,M^k = M$ of Hamiltonian $S^1$-manifolds together with a free chain $C^j$ in $M^j$ for each $j$, satisfying
\begin{itemize}
\item $M^{j+1}$ is obtained from $M^{j}$ by an $S^1$-equivariant symplectic blowup 
at a fixed point $v^{j}$ in $C^{j}$; 
\item $C^{j+1}$ is the free chain that contains the newly created exceptional sphere
that replaces the isolated fixed point $v^{j}$; and
\item   $C^0$ is a free point that we will denote $p$.
\end{itemize}

In particular, the isotropy weights at $p$ are $(1,-1)$ and we may assume without loss of generality that $\Phi(p)=0$.
By the Local Normal Form Theorem, 
there are complex coordinates $z_1,\, z_2$ on an invariant neighborhood  $U$ of $C^0$ in $M^0$ such that
\begin{itemize}
\item the circle action is $\lambda \cdot (z_1,z_2)=(\lambda z_1,\lambda^{-1} z_2)$,

\vspace{.01in}
\item the symplectic form is $\omega=\frac{i}{2}(dz_1 \wedge d\, \overline{z}_1+d z_2 \wedge d\, \overline{z}_2)$,

\vspace{.02in}
\item the moment map is $\Phi(z_1,z_2)=\frac{1}{2}|z_1|^2-\frac{1}{2}|z_2|^2$, and

\vspace{.03in}
\item the neighborhood $U=\left\{ (z_1,z_2)\, \big|\,  \, \frac{1}{2}|z_1|^2+\frac{1}{2}|z_2|^2 < \varepsilon\right\}$  .
\end{itemize}
The standard $(S^1)^2$-action on $U$
extends the given $S^1$-action as the anti-diagonal circle action.

Next we construct a chain blowup $U = U^0, U^1, \dots, U^k = \widetilde{U}$ with the same ordering as the given chain blowup.  
Concretely, for each $j$, let $r_j$ be the number of vertices in the chain $C^j$ whose moment map value is less than or equal to that of $v_j$.  
At each step, let $v^j \in U^j$ be the fixed point  such that there are exactly $r_j$ fixed points whose whose moment  image for the anti-diagonal circle action is less than or equal to that of $v^j$.
We perform an $(S^1)^2$-equivariant symplectic blowup at the 
fixed point $v^j$ of symplectic size small enough so that that the ball used to perform the blowup is completely contained inside of $U^j$.
Finally, to construct $\widetilde{M}$, we replace the neighborhood
$U$ in $M$ with the neighborhood  $\widetilde{U}$ equipped with
the anti-diagonal circle action.
\end{proof}

 To achieve a partial
flip of the dull graph, we  use a procedure similar  to the one in the proof of Lemma~\ref{lem:new}, blowing down a
chain and then blowing it back up, but in this case
we blow it back up ``upside down."

\begin{Lemma}\labell{prop:existence}
Given any orientation $\mathcal O$ on a dull graph $\G$, there exists a compact, connected four-dimensional
Hamiltonian $S^1$-manifold whose oriented dull graph is isomorphic to $(\G, \mathcal O)$.
\end{Lemma}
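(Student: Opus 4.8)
\textbf{Proof proposal for Lemma~\ref{prop:existence}.}

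The plan is to build a manifold realizing $(\G,\mathcal O)$ by a two-stage construction: first produce \emph{some} Hamiltonian $S^1$-manifold whose dull graph is $\G$, ignoring the orientation; then adjust its orientation to match $\mathcal O$ using the operations already set up in Section~\ref{sec:background}. For the first stage, I would invoke Karshon's classification (Theorem~\ref{thm:elucid} together with \cite[Theorem~4.1]{karshon}): it suffices to exhibit a \emph{decorated graph} whose underlying dull graph is $\G$, since a decorated graph is always realized by an actual manifold. Producing such a decorated graph is a combinatorial matter of choosing moment-map values for the vertices of $\G$ (compatible with the edge labels $k$, which prescribe the slopes/weights along isotropy spheres) and choosing areas for any fat vertices; the constraints are exactly those needed for the graph to be a valid decorated graph, and these can always be satisfied, e.g.\ by taking a sufficiently ``spread out'' set of moment values and large areas. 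This gives a compact, connected, four-dimensional Hamiltonian $S^1$-manifold $M_0$ whose dull graph is isomorphic to $\G$, carrying \emph{its own} natural orientation $\mathcal O_0$.

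For the second stage, by the Lemma stated just before Lemma~\ref{lem:fullflip}, any two orientations on $\G$ differ by possibly taking the opposite orientation and then finitely many partial flips. So it suffices to show that, starting from a manifold realizing $(\G,\mathcal O')$, one can produce a manifold realizing $(\G,\mathcal O'')$ whenever $\mathcal O''$ is obtained from $\mathcal O'$ either by the opposite-orientation operation or by a single partial flip along a free chain. The opposite-orientation case is precisely Lemma~\ref{lem:fullflip}: replacing $(M,\omega,\Phi)$ by $(M,-\omega,-\Phi)$ realizes the opposite orientation on the same dull graph. The partial-flip case is handled by the blow-down/blow-up-upside-down procedure indicated in the paragraph preceding this Lemma and modeled on Lemma~\ref{lem:new}: given a free chain $C$, Lemma~\ref{lem:ex} writes $(M,C)$ as a chain blowup of a free point $p$ in some $\widecheck M$; one then blows $p$ back up with the \emph{reversed} order of fixed points (equivalently, performing the analogous sequence of $(S^1)^2$-equivariant blowups in the corner-manifold local model but with the triangular neighborhood mounted so that the anti-diagonal moment direction is reversed), which reverses the directions of all edges in $C$ while leaving the rest of the oriented dull graph unchanged. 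Iterating over the finitely many flips needed to pass from $\mathcal O_0$ (or its opposite) to $\mathcal O$ produces the desired manifold.

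The main obstacle I anticipate is the bookkeeping in the partial-flip step: one must check that blowing the free point back up ``upside down'' is actually possible (the requisite equivariant ball embeds, with sizes small enough to stay inside the local neighborhood, exactly as in the proof of Lemma~\ref{lem:new}) and that the resulting oriented dull graph is $\mathcal O'$ with the directions along $C$ reversed \emph{and nothing else changed}—in particular that the fat vertices, extremal labels, and all other edges and weights are unaffected. This is where the corner-manifold local model of Section~\ref{sec:corner} does the real work, since it lets us localize the modification to a neighborhood of $C$ and read off the new orientation from the combinatorics of the rebuilt chain. The remaining ingredients—the combinatorial construction of the decorated graph in stage one, and the opposite-orientation move—are routine given the results already established.
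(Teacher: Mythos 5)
Your proof follows essentially the same route as the paper: realize $\G$ by some manifold, match the extremal labels via the opposite-orientation move (replacing $(\omega,\Phi)$ by $(-\omega,-\Phi)$), and realize each partial flip by blowing the free chain down to a free point (Lemma~\ref{lem:ex}) and blowing it back up in reversed order. The only real difference is your first stage: the paper simply observes that a dull graph is, by definition, the dull graph of some manifold, so there is no need to construct and realize a decorated graph from scratch --- a detour that would require actually verifying Karshon's realizability conditions rather than asserting they ``can always be satisfied.''
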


\begin{proof}
By definition, there's  a compact, connected four-dimensional Hamiltonian $S^1$-manifold $(M,\omega,\Phi)$
with dull graph $\G$.  
By replacing $\Phi$ by $-\Phi$ if necessary,
we may assume that the  orientation on $\G$
induced by the moment map $\Phi$ has
the same 
 ``maximum" and ``minimum" labels 
as the orientation $\mathcal O$,  that is,
the two orientations differ by a finite number of partial flips.

 Therefore, to construct a manifold $\widetilde M$ whose oriented dull graph
is isomorphic to $(\G, \mathcal O)$,  it suffices to show that given any free chain $C$ the following procedure produces a manifold $M'$ so that the oriented dull graphs of $M$ and $M'$ are isomorphic except that the orientations differ by a partial flip along $C$:
By Lemma~\ref{lem:ex}, there exists a  Hamiltonian $S^1$ manifold $\widecheck{M}$ with a free point $\widecheck{C}$ so that
$(M, C)$ is a chain blowup of $(\widecheck{M}, \widecheck C )$, i.e., there exists a sequence $\widecheck{M} = M^0,\dots,M^k = M$ of Hamiltonian $S^1$-manifolds 
together with a free chain $C^j$ in $M^j$ for each $j$, satisfying
\begin{itemize}
\item $M^{j+1}$ is obtained from $M^{j}$ by an $S^1$-equivariant symplectic blowup 
at a fixed point $v^{j}$ in $C^{j}$;  and
\item $C^{j+1}$ is the free chain that contains the newly created exceptional sphere.
\end{itemize}
To construct $M'$, we perform an {\bf inverted chain blowup}: a  chain blowup $$\widecheck M = (M')^0, (M')^1, \dots, (M')^k=M'$$ so that the oriented dull graphs of $(M')^j$ and $M^j$ differ by a partial flip for each $j$.
Concretely, 
let $r_j$ be the number of vertices in the chain $C^j$ whose moment map value is  less than or equal to that of $v^j$. 
Let $(M')^0 = \widecheck M$, $(C')^0=C^0$, and for each $j > 0$
let $M^{j+1}$ be the blow up at the fixed point $(v')^j \in (C')^j$ such that
 the chain $(C')^j$ has exactly $r_j$ points whose moment map value is  greater 
than or equal to that of $(v')^j$.
\end{proof}

\begin{center}
\begin{figure}[h]
\includegraphics[width=10cm]{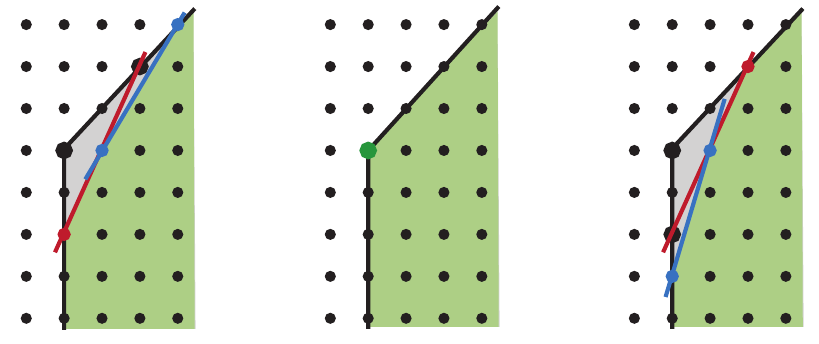}
\caption[.]{
The center figure represents a neighborhood of a free point; the $S^1$  moment map is the projection onto the vertical axis.
The figure on the left represents a chain blowup at this point. The figure on the right is the inverted chain blowup.
In
 each case, the red  and blue edges represent isotropy two-spheres with  stabilizers $\Z_2$ and $\Z_3$, respectively.
}
\label{fig:flip-chain}
 \end{figure}
\end{center}

We now explore the symplectic analogue of holomorphic blowup maps on corner manifolds. 
For the blowup of a single point, the holomorphic blowup map is a surjective map that collapses
the exceptional sphere to a point.  
By contrast, the symplectic blowup map is a symplectomorphism, defined only on the complement of the exceptional sphere.
In the following lemma, we iterate this process to
explicitly define a symplectomorphism $\psi$ from the complement of the chain of isotropy spheres in a corner manifold
onto an open subset of $\C^2$.
We call the map $\psi$ the \emph{symplectic blowup map}.

\begin{Lemma}\labell{lem:strip}
Let $N$ be a corner manifold with moment image $\Phi(N)=\Delta$; we identify $N$ with the reduced space $M_\Delta := \C^n\mod K$, equipped with the $(S^1)^n/K\cong (S^1)^2$ action.
Let $v_1,\dots,v_n$  be the inward-pointing normal
vectors to the  facets of $\Delta$, with the standard counter-clockwise order, and let $v_i = (\alpha_i, \beta_i)$ for all $i$.
Let $D$ be the chain of isotropy spheres in $N$. 
Then the map 
\begin{gather*} 
\psi \colon  N \smallsetminus D  \to    \bigcap_{i=2}^{n-1} \left\{ z \in \C^2\  \Big|\ \textstyle{\frac12}  \left\langle \big(|z_1|^2,|z_2|^2\big), v_i\ \right\rangle  > a_i \right\} \\
 [x_1,\dots, x_n]  \mapsto  \left( \frac{ x_1 x_2^{\alpha_2} \cdots x_{n-1}^{\alpha_{n-1}} }{ | x_2^{\alpha_2} \dots x_{n-1}^{\alpha_{n-1}}|},
\frac{x_{2}^{\beta_{2}} \cdots x_{n-1}^{\beta_{n-1}} x_{n}^{}}{|x_{2}^{\beta_{2}} \cdots x_{n-1}^{\beta_{n-1}}|} \right) 
\end{gather*}
is a well-defined $(S^1)^2$-equivariant symplectomorphism  
that  intertwines the moment maps.
Here,  $\C^2$ is equipped with the standard symplectic form, $(S^1)^2$-action, and the  homogeneous moment map.
\end{Lemma}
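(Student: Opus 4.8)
The plan is to verify directly that $\psi$ is well-defined, equivariant, and intertwines the moment maps, and then to identify it as an iterated symplectic blowup map, which gives symplecticity for free. First I would check that $\psi$ is well-defined on $N \smallsetminus D = M_\Delta \smallsetminus D$. A point $[x_1,\dots,x_n] \in M_\Delta$ lies outside $D$ exactly when $x_i \neq 0$ for all $2 \leq i \leq n-1$, so the denominators $|x_2^{\alpha_2}\cdots x_{n-1}^{\alpha_{n-1}}|$ and $|x_2^{\beta_2}\cdots x_{n-1}^{\beta_{n-1}}|$ are nonzero there. To see independence of the representative, recall that $[x] \sim [y]$ iff they differ by the action of $K$, whose Lie algebra $\iota(\fk)$ has basis $\alpha_i e_1 - e_i + \beta_i e_n$ for $i = 2,\dots,n-1$; concretely $K$ acts by $x_1 \mapsto (\prod_i t_i^{\alpha_i}) x_1$, $x_i \mapsto t_i^{-1} x_i$, $x_n \mapsto (\prod_i t_i^{\beta_i}) x_n$ for $(t_2,\dots,t_{n-1}) \in (S^1)^{n-2}$. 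Plugging into the formula for $\psi$, the $t_i$-factors in the numerator of the first coordinate are $(\prod t_i^{\alpha_i}) \cdot \prod (t_i^{-1})^{\alpha_i} = 1$ up to the phase that gets killed by dividing by the absolute value; I would carry out this cancellation carefully in both coordinates. Then I would check the image: using the moment map formula \eqref{eq:mmap}, $\tfrac12|x_1|^2 = \Phi_1$, $\tfrac12|x_n|^2 = \Phi_2$, and the level-set equations \eqref{eq:level} force $\tfrac12(\alpha_i|x_1|^2 - |x_i|^2 + \beta_i|x_n|^2) = a_i$, i.e. $\tfrac12 \langle(|x_1|^2,|x_n|^2),v_i\rangle = a_i + \tfrac12|x_i|^2 > a_i$ since $x_i \neq 0$; and one must also note $|\psi_1(x)|^2 = |x_1|^2$, $|\psi_2(x)|^2 = |x_n|^2$, so the image lands exactly in the stated open set. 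This last computation simultaneously shows $\psi$ intertwines the moment maps.

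\textbf{Equivariance and bijectivity.} Equivariance under $(\lambda_1,\lambda_2) \in (S^1)^2$ is a direct check against \eqref{eq:action}: acting by $(\lambda_1,\lambda_2)$ multiplies $x_1$ by $\lambda_1$ and $x_n$ by $\lambda_2$ and leaves $x_2,\dots,x_{n-1}$ fixed, so $\psi_1$ scales by $\lambda_1$ and $\psi_2$ by $\lambda_2$, matching the standard action on $\C^2$. For bijectivity I would construct the inverse explicitly: given $(z_1,z_2)$ in the target open set, the defining inequalities guarantee that one can solve $\tfrac12|x_i|^2 = \tfrac12\langle(|z_1|^2,|z_2|^2),v_i\rangle - a_i > 0$ for $|x_i|$ ($2 \le i \le n-1$), and then choose phases of $x_1, x_n$ so that the numerators of $\psi_1, \psi_2$ have the arguments of $z_1, z_2$; the $K$-ambiguity in the phases of the $x_i$ is exactly the ambiguity of the $K$-orbit, so the preimage is a single point of $M_\Delta \smallsetminus D$.

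\textbf{Symplecticity via iterated blowups.} The cleanest route to $\psi^*\omega_0 = \omega_\Delta|_{N\smallsetminus D}$ is to induct on $n$, realizing the passage from one corner manifold to another with one fewer facet as a single symplectic blowdown. When $n = 2$, $\Delta$ is the positive quadrant, $M_\Delta = \C^2$, $D = \emptyset$, and $\psi$ is the identity. For the inductive step, pick a facet of $\Delta$ (say the one with normal $v_j$ for a suitable interior index $j$) whose blowdown replaces $v_j$ by merging the two adjacent facets — combinatorially this is the Delzant-corner-smoothing/blowdown described after \eqref{eq:red}, corresponding to $v_{j-1}, v_{j+1}$ being adjacent with $v_{j-1}+v_{j+1}$ a multiple of $v_j$ (self-intersection $-1$, as in the proof of Lemma~\ref{lem:ex}). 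On the complement of the exceptional sphere $S_j$, the symplectic blowdown map $M_\Delta \smallsetminus S_j \to M_{\Delta'}$ is a symplectomorphism intertwining the $(S^1)^2$-actions and moment maps (this is the standard local structure of a symplectic blowup); composing $\psi_{\Delta'}$ with this map and checking the monomial formulas agree on the overlap gives $\psi_\Delta$. Since a symplectomorphism composed with symplectic blowdowns is a symplectomorphism, $\psi_\Delta$ is symplectic.

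\textbf{Main obstacle.} The genuinely delicate point is the bookkeeping with the exponents $\alpha_i, \beta_i$ in the inductive step: one must show the monomial expression for $\psi_\Delta$ factors correctly through the blowdown, i.e. that contracting $x_j = 0$ and re-expressing in the coordinates of $M_{\Delta'}$ reproduces $\psi_{\Delta'}$ up to the normalizing absolute values. This amounts to the linear-algebra identity that deleting $v_j$ and adjusting $v_{j-1}, v_{j+1}$ is compatible with the $K$-weight data in \eqref{eq:lie-alg}, together with tracking which monomial is the ``new coordinate'' after blowdown. An honest alternative, avoiding the induction, is to compute $\psi^*\omega_0$ in action-angle coordinates directly: on the dense torus $\psi$ is a monomial map in the angle variables and an affine map in the action variables (by the moment-map identity above), and a monomial/affine map of this form pulls back the standard form to the standard form precisely because the exponent matrix has the block-triangular form $\big(\begin{smallmatrix}1 & \alpha_2 \cdots \alpha_{n-1} & 0\\ 0 & \beta_2 \cdots \beta_{n-1} & 1\end{smallmatrix}\big)$ paired against the dual action coordinates — but making ``monomial map preserves $\sum dp_k\wedge dq_k$'' precise still requires the same determinant/adjacency computation, so the two approaches have the same core. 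I would present the inductive/blowdown version since it connects directly to the blowup language already set up in Lemma~\ref{lem:ex} and the discussion preceding Definition~\ref{def:triangular}.
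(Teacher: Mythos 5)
Your proposal is correct and follows essentially the same route as the paper: both argue by induction on the number of facets, blowing down a $-1$ isotropy sphere (an interior index $j$ with $v_j=v_{j-1}+v_{j+1}$, found exactly as in the proof of Lemma~\ref{lem:ex} applied to the anti-diagonal circle) via the phase-normalized monomial map, and writing $\psi$ as the composition of these single blowdown steps. The paper carries out explicitly the two points you flag as remaining work: it verifies that each single blowdown is an equivariant symplectomorphism by lifting to $\C^n$ and computing that the pullback of the standard form differs from the standard form by a multiple of $\operatorname{d}\bigl(|x_j|^2-|x_{j-1}|^2-|x_{j+1}|^2\bigr)$, which vanishes on the level set $U_\Delta$, and it handles your ``exponent bookkeeping'' obstacle by checking that the identity $v_j=v_{j-1}+v_{j+1}$ makes the lifted map send $K_\Delta$-orbits to $K_{\widecheck\Delta}$-orbits, so the composition reproduces the stated global formula.
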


\begin{proof}
We will prove the claim by showing that $\psi$ can be written as the composition of equivariant symplectomorphisms
that intertwine the moment maps. 
If $n = 2$, the map $\psi$ is the identity map and the claim is trivial.
If $n \geq 3$, we claim that there is at least one index $1 < j < n$ so that the set of points $p \in N$ with $\langle \Phi(p), v_j \rangle = a_j$ is a two-sphere in $N$ with self intersection $-1$, or equivalently such that $v_j=v_{j-1}+v_{j+1}$.
To see this, apply  the argument in the proof of Lemma~\ref{lem:ex} to the anti-diagonal circle action on $N$.
Fix one such $j$ and
define
$$
\widecheck{\Delta} = \bigcap_{\stackrel{i=1}{i\neq j}}^n \left\{ w \in \R^2\ \Big|\ \langle w, v_i \rangle \geq a_i \right\}.
$$ 
The fact that  $v_j=v_{j-1}+v_{j+1}$
  guarantees that $\widecheck{\Delta}$ is again a Delzant polyhedral set.
Let $\widecheck{N} \simeq M_{\widecheck{\Delta}}$ be the associated corner manifold.

Before completing the proof, to
motivate the definition of $\psi$, 
we recall the holomorphic blowup map.
Consider the holomorphic map
\begin{equation} \labell{eq:singlecbup}
 [x_1,\ldots,x_{n}] \mapsto [x_1,\ldots,x_{j-2},{x_{j-1} x_{j}},{x_{j}x_{j+1}},x_{j+2},\ldots,x_{n}]
\end{equation}
from $N$ to  $\widecheck{N}$.  The space  $N$, together with the map \eqref{eq:singlecbup}, is the complex blowup 
of $\widecheck{N}$ at the unique point $[y]\in \widecheck{N}$ that has $y_{j-1}=y_j=0$. 
 The map \eqref{eq:singlecbup} sends
the exceptional sphere  $\{ [x] \in N \mid x_{j} = 0 \}$ to  $[y]$ and is a biholomorphism on the complement of these sets.
Applying this blowdown procedure inductively and composing the blowup maps, 
we get a holomorphic blowup map from $N$ to $\C^2$
\begin{equation*}\labell{eq:cbup}
[x_1,\ldots,x_{n}] \mapsto \Big({x_1^{} x_2^{\alpha_2} \cdots x_{n-1}^{\alpha_{n-1}}},
{x_{2}^{\beta_{2}}  \cdots  x_{n-1}^{\beta_{n-1}} x_{n}^{}}\Big),
\end{equation*}
where $v_i = (\alpha_i, \beta_i)$;
moreover, the chain $D \subset N$ is the preimage of the origin in $\C^2$.

We now return to
completing the proof by analyzing the analogous symplectic blowup.
We claim that the map
\begin{equation}\label{eq:Teq-symplectom}
\varpi \colon \left\{ [x] \in N \ \Big|\  x_{j}\neq 0 \right\} \to \left\{[y] \in \widecheck{N}\ \Big|\ 
\textstyle{\frac{1}{2}}\alpha_j |y_1|^2 + \textstyle{\frac{1}{2}} \beta_j|y_{n-1}|^2 > a_j \right\}
\end{equation}
given by
\begin{equation*}\labell{eq:singlesbup}\textstyle{
 \left[x_1,\ldots,x_{n+1}\right] \mapsto  \left[x_1,\ldots,x_{j-2}, \frac{x_{j-1} x_{j}}{| x_{j}|},\frac{x_{j}x_{j+1}}{|x_{j}|},x_{j+2},\ldots,x_{n}\right]
}
\end{equation*}
is a well-defined $(S^1)^2$-equivariant symplectomorphism  
that intertwines the moment maps.
To see this, consider the map
\begin{eqnarray*}
\widetilde \varpi \colon\left \{x \in \mathbb C^{n} \ \Big|\  x_{j} \neq 0 \right\}  & \to &  \mathbb C^{n-1} \\
 ( x_1,\dots,x_{n}) & \mapsto &  \left(x_1,\ldots,x_{j-2}, 
 \textstyle{\frac{x_{j-1} x_{j}}{|x_{j}|}},\textstyle{\frac{x_{j}x_{j+1}}{|x_{j}|}},x_{j+2},\ldots,x_{n}\right).
\end{eqnarray*}
Applying equation~\eqref{eq:level} to the
level sets $U_\Delta$ and $U_{\widecheck{\Delta}}$,
$\widetilde\varpi$ maps 
$\{x \in U_\Delta \, | \, x_j\neq 0\}$
onto
$\left\{y\in U_{\widecheck{\Delta}}\ \left| \ \frac{1}{2} \alpha_j |y_{1}|^2+\frac{1}{2}\beta_j|y_{n-1}|^2 >a_j \right. \right\}.$
Moreover, since $v_j=v_{j-1}+v_{j+1}$, we have that if  $x, x' \in \mathbb C^{n}$ satisfy  $x_{j} \neq 0 \neq x'_{j}$,  then $x$ and $x'$
are in the same $K$ orbit if and only if $\widetilde \varpi(x), \widetilde \varpi(x') \in \mathbb C^{n-1}$ are 
in the same $\widecheck{K}$ orbit.
Thus, the map $\varpi$ defined in \eqref{eq:Teq-symplectom} is a 
well-defined, $(S^1)^2$-equivariant bijection that intertwines the moment maps.

Furthermore,
for $[x] \in N$ we have
$$\textstyle{\frac{1}{2}} |x_{j+1}|^2 + \textstyle{\frac{1}{2}} |x_{j-1}|^2 = \textstyle{\frac{1}{2}}|x_{j}|^2 + \varepsilon_j,$$
where $\varepsilon_j=a_j-(a_{j-1}+a_{j+1})$. 
Thus
$\varpi$ has a smooth inverse  given by
$$[ y_1, \dots, y_{n-1}] \mapsto \left[y_1,\dots,y_{j-1},  \big( |y_{j-1}|^2 + |y_{j}|^2 - 2 \varepsilon_j \big)^{\frac{1}{2}}, y_{j}, \dots, y_{n-1}\right].$$
Therefore, $\varpi$ is a  diffeomorphism.
Finally, by a straightfoward computation in coordinates, the pull-back by the map $\widetilde \varpi$ of the standard symplectic form on $\mathbb C^{n-1}$  is the standard symplectic form on $\mathbb C^{n}$ plus a multiple of $\operatorname{d}(|x_{j}|^2 - |x_{j-1}|^2 - |x_{j+1}|^2).$ Therefore, $\varpi$ is a symplectomorphism.
We now restrict to $N \smallsetminus D$; 
applying this blowdown procedure inductively and composing the symplectic blowup maps, we see that $\psi$ has the desired properties.
\end{proof}

\section{Partial flips and proof of Theorem \ref{Thm:main}}\labell{sec:partial}

In this section we construct
an equivariant diffeomorphism between 
two compact, connected,
four-dimensional Hamiltonian $S^1$-manifolds whose oriented dull graphs differ by a single partial flip. 
We give the construction explicitly
in coordinates in triangular neighborhoods of the chains and then use the symplectic blowup maps 
to extend to a global diffeomorphism.   
As a consequence, we deduce Theorem \ref{Thm:main}.

\begin{Definition}\label{def:mirror}
Let $N$ be a corner manifold whose moment image is the polyhedral set 
$$\Delta = \bigcap_{i=1}^n \{ w \in \R^2 \mid \langle w, v_i \rangle \geq a_i \},$$
where $v_i = (\alpha_i, \beta_i)$ for all $i$.
Reflecting the moment map image $\Delta$ of  $N$ 
across the diagonal line $w_1 = w_2$ yields a Delzant polyhedral set $\Delta'$.  In coordinates,
$$
\Delta'=\bigcap_{i=1}^n \left\{ w \in \R^2\ \Big|\ \langle w, v_i' \rangle \geq a'_i \right\},
$$
where
$a'_i=a_{n+1-i}$
and $v_i'=(\alpha_i', \beta_i')=(\beta_{n+1-i},\alpha_{n+1-i})$.
Let $N'$ denote the associated
corner manifold:  $N' \simeq M_{\Delta'}$.  
We call $N'$ the {\bf mirror} of the manifold $N$. 
This procedure is an involution, so $N$ is also the mirror of $N'$ and we
will refer to $N$ and $N'$ as {\bf mirror manifolds}.
\end{Definition}

\begin{Remark}\labell{rem:3'}
If
$U_\varepsilon$ is a triangular $\varepsilon$-neighborhood
of the chain of isotropy spheres $D$ in $N$ for some $\varepsilon > 0$, then $U'_\varepsilon$ is an
$\varepsilon$-neighborhood of the chain of isotropy spheres $D'$ in 
the mirror manifold $N'$; see Figure \ref{fig:corner-flip}. 
\end{Remark}

\begin{figure}[h]
\centering
\includegraphics[width=10cm]{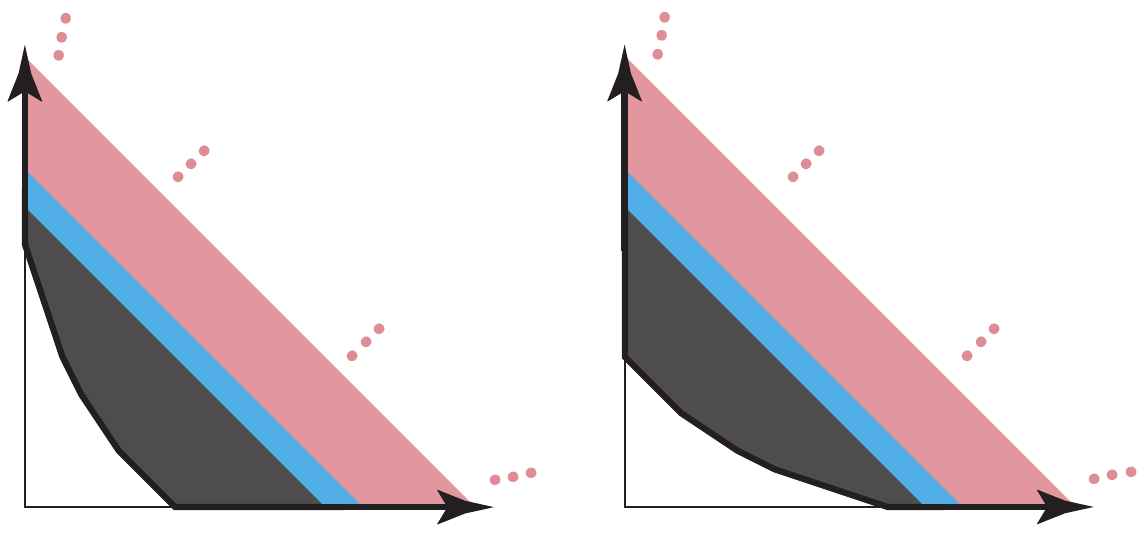} 
\caption[.]{
The shaded regions represent the 
moment map images of triangular neighborhoods of the chains in
a corner manifold and its mirror.}
\label{fig:corner-flip}
 \end{figure}

We start by producing a diffeomorphism between mirror corner manifolds that is $S^1$-equivariant 
with respect to the anti-diagonal action.

\begin{Lemma}\labell{lem:fliptri}
Let $M_{\Delta}$ and $M_{\Delta'}$ be mirror corner manifolds with chains of isotropy spheres $D$ and $D'$, respectively.  The map $\C^n \to \C^n$ given by 
\begin{equation}\labell{eq:1stmap}
(x_1,\ldots,x_n) \mapsto (- \overline{x}_n, \overline{x}_{n-1}, \ldots,\overline{x}_1)
\end{equation}
induces a diffeomorphism $F \colon M_\Delta \to M_{\Delta'}$
that  negates the symplectic form;
 intertwines the moment maps for the diagonal actions on $M_{\Delta}$ and $M_{\Delta'}$;
 is equivariant with respect to the anti-diagonal circle actions on $M_{\Delta}$ and $M_{\Delta'}$;
 and maps the isotropy sphere $S_i \subseteq D$ onto $S_{n+1-i}' \subseteq D'$ for all $1 < i < n$.
Moreover,  \begin{equation}\labell{eq:intertwine}
(\psi' \circ F \circ \psi^{-1})(z_1,z_2) = (-\overline{z}_2, \overline{z}_1)
\end{equation}
for all $(z_1,z_2) \in \psi(M_\Delta \smallsetminus D) \subset \C^2$,
where $\psi \colon M_{\Delta} \smallsetminus D \to \C^2$ and $\psi'\colon M_{\Delta'} \smallsetminus D' \to \C^2$ are the symplectic blowup maps.
\end{Lemma}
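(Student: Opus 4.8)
The plan is to verify directly that the map \eqref{eq:1stmap} on $\C^n$ descends to the reduced spaces and has all the claimed properties, checking each assertion in turn. First I would show that the linear map $\sigma \colon \C^n \to \C^n$, $\sigma(x_1,\ldots,x_n) = (-\overline{x}_n, \overline{x}_{n-1}, \ldots, \overline{x}_1)$, carries the level set $U_\Delta$ of \eqref{eq:level} onto $U_{\Delta'}$: substituting $y_i = \overline{x}_{n+1-i}$ (up to the sign in the first slot, which is irrelevant since only $|y_i|^2$ appears) into the defining equations $\tfrac12(\alpha'_i |y_1|^2 - |y_i|^2 + \beta'_i|y_n|^2) = a'_i$ and using $a'_i = a_{n+1-i}$, $\alpha'_i = \beta_{n+1-i}$, $\beta'_i = \alpha_{n+1-i}$ turns them into the defining equations for $U_\Delta$ after reindexing $i \mapsto n+1-i$. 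Next I would check that $\sigma$ is equivariant for the appropriate $(S^1)^n$-actions in the sense that it intertwines $K = K_\Delta$ with $K' = K_{\Delta'}$; concretely, conjugation by $\sigma$ carries the subtorus $K$ (spanned by $\alpha_i e_1 - e_i + \beta_i e_n$) to $K'$ because the reindexing $i \mapsto n+1-i$ together with the coordinate swap does exactly this. Therefore $\sigma$ descends to a well-defined diffeomorphism $F \colon M_\Delta \to M_{\Delta'}$, and it visibly sends $\{x_i = 0\}$ to $\{y_{n+1-i} = 0\}$, i.e.\ $S_i$ to $S'_{n+1-i}$, hence $D$ to $D'$.

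For the symplectic and moment-map statements, I would use that $\sigma^*\omega_0 = -\omega_0$ on $\C^n$ (complex conjugation reverses the standard form, the coordinate permutation preserves it, and the single sign change on one slot is harmless), so that $F^*\omega_{\Delta'} = -\omega_\Delta$ after reduction. For the moment maps: the diagonal circle on $M_\Delta$ has moment map $[x] \mapsto \tfrac12(|x_1|^2 + |x_n|^2)$ by \eqref{eq:mmap}, and $\sigma$ swaps $|x_1|^2 \leftrightarrow |x_n|^2$, so $F$ intertwines the diagonal moment maps. For the anti-diagonal circle, $(\lambda,\lambda^{-1})\cdot[x_1,\ldots,x_n] = [\lambda x_1, x_2, \ldots, x_{n-1}, \lambda^{-1} x_n]$; applying $\sigma$ and using that complex conjugation turns $\lambda x_1$ into $\overline{\lambda}\,\overline{x}_1 = \lambda^{-1}\overline{x}_1$ (for $\lambda \in S^1$) and $\lambda^{-1} x_n$ into $\lambda \overline{x}_n$, one sees that $F \circ (\lambda,\lambda^{-1}) = (\lambda,\lambda^{-1}) \circ F$ after accounting for the reversal of the two circle factors caused by the coordinate swap — which is precisely why $F$ is anti-diagonally equivariant but \emph{not} diagonally equivariant. (It intertwines the diagonal moment maps only as functions, not the diagonal actions, consistent with negating the form.)

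Finally, for the intertwining formula \eqref{eq:intertwine}, I would simply compute $\psi' \circ F \circ \psi^{-1}$ using the explicit formula for $\psi$ from Lemma~\ref{lem:strip}. Starting from $(z_1,z_2) \in \psi(M_\Delta \smallsetminus D)$, a point $[x] = \psi^{-1}(z_1,z_2)$ satisfies $\psi([x]) = \bigl(\tfrac{x_1 x_2^{\alpha_2}\cdots x_{n-1}^{\alpha_{n-1}}}{|x_2^{\alpha_2}\cdots x_{n-1}^{\alpha_{n-1}}|},\, \tfrac{x_2^{\beta_2}\cdots x_{n-1}^{\beta_{n-1}} x_n}{|x_2^{\beta_2}\cdots x_{n-1}^{\beta_{n-1}}|}\bigr) = (z_1,z_2)$; then $F([x])$ is represented by $(-\overline{x}_n, \overline{x}_{n-1}, \ldots, \overline{x}_1)$, and plugging this into the analogous formula for $\psi'$ (with primed exponents $\alpha'_i = \beta_{n+1-i}$, $\beta'_i = \alpha_{n+1-i}$) the monomials reorganize, via the reindexing $i \mapsto n+1-i$, into the complex conjugates of the original monomials, yielding $(-\overline{z}_2, \overline{z}_1)$. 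I expect this last bookkeeping with the exponents and the conjugation — making sure the normalizing denominators match up and the signs land correctly — to be the main obstacle; it is a routine but somewhat delicate computation in coordinates, and the cleanest route is probably to observe that both $\psi$ and $\psi'$ are built as compositions of the single-blowdown maps $\varpi$ from the proof of Lemma~\ref{lem:strip}, and to check the formula at each stage of an inverted chain blowup rather than all at once.
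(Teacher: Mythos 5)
Your proposal is correct and follows essentially the same route as the paper: verify everything directly on $\C^n$ (that the map carries $U_\Delta$ to $U_{\Delta'}$ and $K_\Delta$ to $K_{\Delta'}$, negates $\omega_0$, and behaves as claimed with respect to the torus actions and moment maps) and then descend to the reduced spaces, finishing with the coordinate computation for \eqref{eq:intertwine}. The only cosmetic difference is that the paper factors \eqref{eq:1stmap} as the coordinate reversal $(x_1,\dots,x_n)\mapsto(x_n,\dots,x_1)$ followed by $(x_1,\dots,x_n)\mapsto(\overline{x}_1,\dots,\overline{x}_{n-1},-\overline{x}_n)$ and checks the properties of each factor separately, whereas you verify the composite in one pass.
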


\begin{proof}
By definition, the normal vectors $v_i=(\alpha_i,\beta_i)$, $v'_i=(\alpha'_i,\beta'_i)$ and
the constants $a_i$,  $a'_i$ defining $\Delta$ and $\Delta'$ satisfy 
\begin{equation*}
(\alpha_i', \beta_i')=(\beta_{n+1-i},\alpha_{n+1-i}) \mbox{ and } a_i' = a_{n + 1 - i}.
\end{equation*}
By equations \eqref{eq:lie-alg}-\eqref{eq:red}, this implies that the symplectomorphism $\C^n \to \C^n$ given by $(x_1,\dots,x_n) \mapsto (x_n, \dots, x_1)$
takes $\cU_\Delta$ to $\cU_{\Delta'}$ and $K_\Delta$ to $K_{\Delta'}$. Hence it induces a symplectomorphism 
$f \colon M_\Delta \to M_{\Delta'}$.  By equations \eqref{eq:action} and \eqref{eq:mmap},
since the map that sends $(x_1,\dots,x_n) \mapsto (x_n, \dots, x_1)$ is weakly equivariant with respect to the automorphism of $(S^1)^n$ that reverse the coordinates,  the symplectomorphism  $f$ is weakly equivariant with respect to the automorphism 
of $(S^1)^2$ that exchanges the coordinates, 
and $f^*(\Phi_{\Delta'})$ can be obtained from $\Phi_\Delta$ by exchanging the coordinates.
Therefore, by equation \eqref{eq:Si1}, $f(S_i)=S'_{n+1-i}$ for each $i$.
Similarly, the map from $\C^n \to \C^n$ given by $(x_1,\dots,x_n) \mapsto (\overline{x}_1,\dots, \overline{x}_{n-1}, -\overline{x}_n)$ induces a diffeomorphism from $M_\Delta$ to itself that negates the symplectic form, is weakly equivariant with respect to inversion, and preserves the moment map.  Therefore, the map $\C^n \to \C^n$ given in (4.5), which is the composition of these two maps, induces a diffeomorphism $F \colon M_\Delta \to M_{\Delta'}$ with the desired properties.
Finally, we can verify \eqref{eq:intertwine} by a short calculation
using Lemma~\ref{lem:strip}.\end{proof}

Next, we construct an $S^1$-equivariant diffeomorphism between mirror corner manifolds that 
interpolates between (a) the diffeomorphism $F$ defined in Lemma 3.15 in a triangular neighborhood $U_{\alpha}$ 
and (b) a lift of   the identity map in the complement of a triangular neighborhood $U_{\beta}$, where $\beta>\alpha$.
In Figure \ref{fig:corner-flip}, the dark grey regions represent the moment map images of triangular neighborhoods $U_{\alpha}$ and $U'_{\alpha}$ in a corner manifold $N$ and its mirror $N'$; the pink regions represent the moment map images of the complements of $U_{\beta}$ and $U'_{\beta}$; and the blue region in the middle is where the interpolation takes place.

\begin{Lemma}\labell{lem:combined}
Let $N$ and $N'$ be mirror corner manifolds with chains of isotropy spheres $D$ and $D'$, respectively.  
Let  $U_\beta$ be a  triangular neighborhood  of $D$ for some $\beta > 0$.  There is an orientation-preserving  diffeomorphism ${H} \colon N  \to N' $ such that
$$\textstyle (\psi' \circ H \circ \psi^{-1})(z_1,z_2) = (z_1,z_2) \text{ for all } (z_1,z_2) \in \psi(N \smallsetminus U_\beta) \subset \C^2. $$
Moreover, the map $H$ intertwines the moment maps for the diagonal circle actions on $N$ and $N'$; is equivariant with respect to the anti-diagonal circle actions on $N$ and $N'$; and maps the isotropy sphere $S_i \subseteq D$ onto $S'_{n+1-i} \subseteq D'$ for all $1 < i < n$.
Here, $\psi \colon N\smallsetminus D \to \C^2$ and $\psi' \colon N'\smallsetminus D' \to \C^2$ are the symplectic blowup maps.
\end{Lemma}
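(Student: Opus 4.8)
The plan is to build $H$ by interpolating between the ``mirror flip'' map $F$ from Lemma~\ref{lem:fliptri} near the chain and a lift of the identity far from the chain, using the symplectic blowup maps $\psi,\psi'$ to transport the problem to an open subset of $\C^2$ where everything is explicit. First I would pick a triangular neighborhood $U_\alpha \subset U_\beta$ with $0 < \alpha < \beta$ (possible by Remark~\ref{rem:shrink}), and recall from \eqref{eq:intertwine} that in the $\psi$-coordinates the map $F$ reads $(z_1,z_2)\mapsto(-\overline z_2,\overline z_1)$, which is precisely rotation by $\pi/2$ in the ``anti-diagonal'' coordinates: writing $z_1 = \sqrt{2s_1}\,e^{i\theta_1}$, $z_2=\sqrt{2s_2}\,e^{i\theta_2}$, one has $|z_1'|^2 = |z_2|^2$, $|z_2'|^2=|z_1|^2$ and the angular part is a fixed rigid motion on the $(S^1)^2$-orbit. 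The identity map, transported the same way, is $(z_1,z_2)\mapsto(z_1,z_2)$. So on the region $\psi(U_\beta\setminus D)$ I need a diffeomorphism onto the corresponding region in $\psi'$-coordinates that agrees with the $\pi/2$-rotation-type map near the inner triangle and with the identity near the outer triangle.

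The key point is to do this interpolation in a way that (i) is $S^1$-equivariant for the anti-diagonal action, (ii) intertwines the diagonal moment maps, hence respects the level sets $s_1+s_2 = \text{const}$, and (iii) extends smoothly across $D$ on the $N$ side and across $D'$ on the $N'$ side. Condition (ii) forces $H$ to preserve the foliation by diagonal moment level sets, each of which is (on the complement of $D$) a lens-space-like $3$-manifold fibered over the segment $\{s_1+s_2=c\}\cap(\R_{\ge0})^2$; condition (i) then means $H$ must be a bundle map for the anti-diagonal $S^1$. So the natural ansatz is: in coordinates $(s_1,s_2,\theta_1,\theta_2)$ on $(N\setminus D)$, define $H$ by a fiberwise rotation whose angle interpolates, as a function of the ``diagonal time'' $c = s_1+s_2$ (or really of a rescaled coordinate that measures position within the blue annular region of Figure~\ref{fig:corner-flip}), between the rigid motion coming from $F$ and the identity, while on the $(s_1,s_2)$-part one interpolates between the reflection $(s_1,s_2)\mapsto(s_2,s_1)$ and the identity, chosen constant (equal to $F$'s value) for $c<\alpha$ and equal to identity for $c>\beta$. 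Because $F$ already extends smoothly across $D$ mapping $S_i$ to $S'_{n+1-i}$, and because the interpolation is locally constant (equal to $F$) near $D$, the resulting $H$ extends smoothly there; symmetrically it extends over $D'$. Orientation-preservation is checked by continuity/connectedness: $H$ equals the identity-lift near the boundary, so it is orientation-preserving there, hence everywhere.

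The main obstacle I expect is \textbf{equivariant smoothness across the chain $D$ and its mirror $D'$}, together with making sure the interpolating family stays a diffeomorphism at every intermediate parameter. Away from $D$ the construction is an explicit formula in action-angle coordinates and everything is manifestly smooth; the subtlety is that these coordinates degenerate along the isotropy spheres and at the isolated fixed points of $D$, so one must verify that the chosen interpolating map, written back in the homogeneous coordinates $[x_1,\dots,x_n]$ of Lemma~\ref{lem:strip}, is a genuine smooth map of manifolds and not merely of the open dense part. The cleanest way around this is to arrange the interpolation so that it is \emph{literally equal to $F$} on all of $U_\alpha$ (not just asymptotically), so that smoothness near $D$ is inherited verbatim from Lemma~\ref{lem:fliptri}; then one only needs smoothness of the interpolation on the region $\alpha \le s_1+s_2 \le \beta$, which is contained in $N\setminus D$ where the action-angle description is valid, and where the remaining check is that a smooth family of fiberwise rotations over a collar, with prescribed endpoints, assembles to a diffeomorphism of the $3$-manifold fibration — a standard argument. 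The condition that $H$ intertwine the diagonal moment maps and be anti-diagonally equivariant is then automatic from the shape of the ansatz. Finally one records that $H$ maps $S_i$ to $S'_{n+1-i}$ since it equals $F$ near $D$, and that the identity $(\psi'\circ H\circ\psi^{-1})(z_1,z_2)=(z_1,z_2)$ holds on $\psi(N\setminus U_\beta)$ by construction.
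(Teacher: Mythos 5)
Your overall architecture coincides with the paper's: keep $F$ from Lemma~\ref{lem:fliptri} on a smaller triangular neighborhood $U_\alpha\subset U_\beta$, transport everything by $\psi$ and $\psi'$ to $\C^2$, and interpolate to the identity in the annular region $\alpha<\frac12|z|^2<\beta$, using the diagonal moment map as the interpolation parameter so that smoothness across $D$ and $D'$, the statement about the $S_i$, orientation, and equivariance all come for free. However, the specific interpolating family you propose does not exist. Your ansatz requires each intermediate map to have the split form $(s_1,s_2,\theta_1,\theta_2)\mapsto\bigl(\sigma_c(s_1,s_2),R_c(\theta_1,\theta_2)\bigr)$, i.e.\ to send $(S^1)^2$-orbits to $(S^1)^2$-orbits. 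Such a map induces a homeomorphism of the orbit space of each diagonal level set, namely the segment $\{s_1+s_2=c\}\cap(\R_{\geq0})^2$, and whether that homeomorphism fixes or swaps the endpoints is a locally constant invariant of a continuous family. Since $F$ induces the swap $(s_1,s_2)\mapsto(s_2,s_1)$ while the identity fixes the endpoints, no continuous interpolation through orbit-preserving maps can join them. Equivalently, the angular part of $F$ is $(\theta_1,\theta_2)\mapsto(\pi-\theta_2,-\theta_1)$, whose linear part $\left(\begin{smallmatrix}0&-1\\-1&0\end{smallmatrix}\right)$ lies in the discrete group $\GL(2,\Z)$ and cannot be connected to the identity; there is no ``rigid motion whose angle interpolates'' between these two maps of the torus.

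The repair --- and the paper's actual construction --- is to abandon the torus-orbit-preserving ansatz and interpolate through maps that are only anti-diagonally equivariant: set
$$\varphi_t(z_1,z_2)=\Bigl(\cos\bigl(\tfrac{\pi}{2}t\bigr)z_1-\sin\bigl(\tfrac{\pi}{2}t\bigr)\overline{z}_2,\ \cos\bigl(\tfrac{\pi}{2}t\bigr)z_2+\sin\bigl(\tfrac{\pi}{2}t\bigr)\overline{z}_1\Bigr),$$
a path in $\SO(4)$ from the identity to $(z_1,z_2)\mapsto(-\overline{z}_2,\overline{z}_1)$ that preserves $|z|$ and commutes with $\lambda\cdot(z_1,z_2)=(\lambda z_1,\lambda^{-1}z_2)$, but which for $0<t<1$ mixes $z_1$ with $\overline{z}_2$ and therefore does \emph{not} preserve individual torus orbits. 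With a cutoff $\rho$ equal to $1$ near the inner triangle and $0$ outside the outer one, the map $g(z)=\varphi_{\rho(|z|)}(z)$ is a diffeomorphism of $\C^2$ (its inverse is $z\mapsto\varphi_{-\rho(|z|)}(z)$ precisely because each $\varphi_t$ is norm-preserving), it restricts to a diffeomorphism from $\psi(N\smallsetminus D)$ to $\psi'(N'\smallsetminus D')$, and $(\psi')^{-1}\circ g\circ\psi$ glues with $F|_{U_\alpha}$ exactly as you intended. Once this replacement is made, the remaining points in your write-up (smoothness across $D$ inherited from $F$, orientation by connectedness, the moment-map and equivariance properties) go through as in the paper.
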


\begin{proof}
Since  $D$ is contained in $U_\beta$, it is also contained in $U_\alpha$ for some $\alpha < \beta$.  Since $N$ and $N'$ are mirror manifolds,
$D'$ is also contained in $U'_\alpha$. See Remarks \ref{rem:shrink} and \ref{rem:3'}.

Let $F \colon N \to N'$ be the diffeomorphism constructed in  Lemma \ref{lem:fliptri}. Then
\begin{equation}\labell{eq:F}
(\psi' \circ {F} \circ \psi^{-1})(z_1,z_2) = (- \overline{z}_2, \overline{z}_1)
\end{equation}
for all $(z_1,z_2) \in \psi(N \smallsetminus D) \subset \mathbb C^2$.
Moreover, the   diffeomorphism $F$ negates the symplectic form, and so  is orientation-preserving.
The map $F$ also intertwines the moment maps for the diagonal circle actions on $N$ and $N'$.  
Since the triangular neighborhoods $U_\alpha \subset N$ and $U'_\alpha \subset N'$ are each the preimage of $[0,\alpha)$
under the moment map for the respective diagonal circle actions,
  $F$  restricts to a diffeomorphism between $U_\alpha$ and $U'_\alpha.$
Furthermore, the map $F$ is  equivariant with respect 
to the {anti-diagonal} circle actions on $N$ and $N'$.
 Finally, $F$ maps the isotropy sphere $S_i \subseteq D$ onto $S'_{n+1-i} \subseteq D'$ for all $1<i<n$,
and thus  $F$ restricts to a diffeomorphism between 
$U_\alpha \smallsetminus D$ and $U'_\alpha \smallsetminus D'$.

Let $S^1$ act on $\C^2$ by $\lambda \cdot (z_1,z_2) = (\lambda z_1, \lambda^{-1}z_2).$
 The family
    $$\varphi_t \colon \C^2 \to \C^2; \,\,\, \varphi_t(z_1,z_2)=\Big(\cos{\big(\textstyle{\frac{\pi}{2}} t\big)}z_1-\sin {\big(\textstyle{\frac{\pi}{2}} t\big)}\overline{z}_2, \cos{\big(\textstyle{\frac{\pi}{2}} t\big)}z_2+\sin{\big(\textstyle{\frac{\pi}{2} } t\big)}\overline{z}_1\Big)$$
    gives a smooth isotopy of orientation-preserving $S^1$-equivariant diffeomorphisms  that satisfies $\varphi_0=\id$ and $\varphi_1(z_1,z_2)=(- \overline{z}_2, \overline{z}_1)$. Moreover, $(\varphi_t)^{-1}=\varphi_{-t}$ and $\varphi_t$ preserves the standard norm 
    on $\C^2$ for all $t$.
    Define a cut-off function
    $\rho  \colon \R_{\geq0} \to [0,1]$
    such that  $\rho$ equals $1$ on $[0,\alpha]$, and
     equals $0$ on  $[\beta, \infty)$. The map
    \begin{equation*}\labell{eq:iso+cut}
    g \colon \C^2 \to \C^2; \,\,\, g(z_1,z_2)=\varphi_{\rho(|z|)}(z_1,z_2) 
    \end{equation*}
    is an orientation-preserving $S^1$-equivariant diffeomorphism 
   that satisfies, for each point
$z=(z_1,z_2)\in\C^2$,
\begin{equation}\labell{eq:newg}
g(z_1,z_2) = \begin{cases} (-\overline{z}_2,\overline{z}_1) & \text{if } \frac{1}{2} |z|^2 \leq  \alpha \\
(z_1,z_2)  & \text{if } \frac{1}{2}|z|^2  \geq  \beta.
\end{cases}
\end{equation}
 Its inverse is the map $(z_1,z_2) \mapsto \varphi_{-\rho(|z|)}(z_1,z_2)$. 
 Moreover, $g$ preserves the standard norm $|z| = \sqrt{ |z_1|^2 + |z_2|^2}$ on $\C^2$, or equivalently the moment map for the diagonal circle action.

Since $N$ and $N'$ are mirror corner manifolds, the map $(z_1,z_2) \mapsto (-\overline{z}_2,\overline{z}_1)$ takes $\psi(N \smallsetminus D)$ to $\psi'(N' \smallsetminus D')$. Moreover,
because $D \subset U_\alpha$, Lemma~\ref{lem:strip} implies that  every  $z\in\C^2$ with $\frac{1}{2} |z|^2 > \alpha$ is contained in both  $\psi(N \smallsetminus D)$ and $\psi'(N' \smallsetminus D')$.
Therefore,  since $g$ preserves the standard norm on $\mathbb C^2$, the map $g$ induces a diffeomorphism from $\psi(N\smallsetminus D)$ to $\psi'(N'\smallsetminus D')$.  Thus, we can define  a diffeomorphism $G$ from $N \smallsetminus D$ to $N' \smallsetminus D'$ by
\begin{equation}\labell{eq:G}
G = (\psi')^{-1} \circ g \circ \psi.
\end{equation}
Since  $\psi$ is an $(S^1)^2$-equivariant symplectomorphism that intertwines the moment maps, 
the map $G$ is orientation-preserving, intertwines the moment maps for the diagonal circle actions,  and is  equivariant
with respect to the anti-diagonal circle action. 

Finally, ${G}$ coincides  with ${F}$ on $\overline{U_\alpha} \smallsetminus D$, and 
with $(\psi')^{-1} \circ \psi$ on $N \smallsetminus {U}_\beta$,
where $\overline{U_\alpha}$ denotes the topological closure of $U_\alpha$.
Thus we may piece together the maps ${F}|_{U_\alpha}$ and ${G}$ to create the required diffeomorphism $H:N\to N'$.
\end{proof}

\noindent We can now construct the  equivariant
diffeomorphism 
associated to a partial flip in an oriented dull graph.

\begin{Proposition}\labell{prop:partial-flip-diffeo}
Let $(M,\omega,\Phi)$ be compact, connected four-dimensional Hamiltonian 
$S^1$-man\-i\-fold, let   $(\G, \mathcal O)$ be the oriented dull graph of $M$, and let $\mathcal O'$ be
the partial flip of $\mathcal O$ along a free chain $C$.
There exists a compact, connected four-dimensional Hamiltonian $S^1$-manifold $(\widehat M, \widehat \omega, \widehat \Phi)$ and an orientation-preserving, equivariant diffeomorphism from $M$ to $\widehat M$
that induces an isomorphism from $(\G, \mathcal O')$ to the oriented dull graph of $\widehat M$.
\end{Proposition}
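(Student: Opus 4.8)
The plan is to localize the problem near the free chain $C$, apply the mirror construction on a corner-manifold model, and then glue back into $M$. First I would use Lemma~\ref{lem:new} to replace $M$, up to isomorphism of oriented dull graphs, by a manifold (still called $M$) that contains an invariant neighborhood $W$ of the free chain corresponding to $C$, together with an $S^1$-equivariant symplectomorphism $\Theta$ from a triangular neighborhood $U_\beta$ in some corner manifold $N$ — equipped with the anti-diagonal circle action — onto $W$. Since the oriented dull graph is unchanged by this replacement, it suffices to produce $\widehat M$ and the desired diffeomorphism starting from this model. Let $N'$ be the mirror of $N$, with chain $D'$ and triangular neighborhood $U'_\beta$ (see Remark~\ref{rem:3'}), and let $H \colon N \to N'$ be the orientation-preserving diffeomorphism from Lemma~\ref{lem:combined}: it is $S^1$-equivariant for the anti-diagonal circle actions, intertwines the diagonal moment maps, sends $S_i$ to $S'_{n+1-i}$, and equals $(\psi')^{-1}\circ\psi$ on $N\smallsetminus U_\beta$, where $\psi,\psi'$ are the symplectic blowup maps.

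Next I would build $\widehat M$ by surgery: form $\widehat M := \bigl( M \smallsetminus \Theta(D)\bigr) \sqcup \bigl(N' \smallsetminus D'\bigr)$, glued along the overlap. Concretely, on $\Theta(U_\beta \smallsetminus D) \subset M$ and on $U'_\beta \smallsetminus D' \subset N'$ we identify a point $\Theta(x)$ with $H(x)$ for $x \in U_\beta\smallsetminus D$; equivalently, one removes the chain $\Theta(D)$ from $M$ and reglues the neighborhood $U'_\beta$ of $D'$ in $N'$ via $H$. The key technical point is that $\widehat M$ is a smooth compact manifold: compactness follows because $N' \smallsetminus D'$ is replaced into a fixed compact neighborhood; smoothness and the well-definedness of the gluing follow from the fact that $H$ restricts to a diffeomorphism $U_\beta\smallsetminus D \to U'_\beta\smallsetminus D'$ and agrees with $(\psi')^{-1}\circ\psi$ near the outer boundary, so the transition functions are smooth. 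The circle action on $\widehat M$ is the anti-diagonal action on the $N'$-piece and the given $S^1$-action on $M\smallsetminus\Theta(D)$; these agree on the overlap precisely because $H$ and $\Theta$ are equivariant for the anti-diagonal action. One then transports the symplectic form from $M$ outside a slightly smaller neighborhood and from $N'$ inside $U'_\alpha$, interpolating in the intermediate region where $H$ agrees with the blowup-map comparison; since $H$ is orientation-preserving and negates nothing globally in a way that obstructs gluing, $\widehat M$ carries a compatible invariant symplectic structure with moment map $\widehat\Phi$ obtained by patching $\Phi$ with (an affine reparametrization of) the diagonal moment map on $N'$.

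Finally, I would define the equivariant diffeomorphism $f \colon M \to \widehat M$ to be the identity on $M\smallsetminus\Theta(U_\beta)$ and to equal $(\iota_{N'})\circ H\circ\Theta^{-1}$ on $\Theta(U_\beta)$, where $\iota_{N'}\colon N'\smallsetminus D' \hookrightarrow \widehat M$ is the inclusion of the new piece; these agree on the overlap by construction of the gluing, so $f$ is a well-defined equivariant diffeomorphism, and it is orientation-preserving since both $H$ and $\Theta$ are. It remains to check that $f$ induces an isomorphism of oriented dull graphs carrying $(\G,\mathcal O)$ to the oriented dull graph of $\widehat M$ with orientation $\mathcal O'$: outside the chain nothing changes, while on $C$ the map $H$ sends $S_i$ to $S'_{n+1-i}$, which reverses the moment-map order along the chain for the diagonal circle — and this reversal is exactly a partial flip of the orientation along $C$, by Remark~\ref{rem:or-pre} and the description of partial flips in Section~\ref{sec:background}. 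I expect the main obstacle to be the gluing step: one must verify carefully that the patched symplectic form is nondegenerate and invariant in the interpolation region and that $\widehat\Phi$ is a genuine moment map there, which requires knowing that $H$ respects the diagonal moment maps (hence the collar structure) closely enough that the standard Moser-type interpolation applies — this is where Lemma~\ref{lem:combined}'s precise compatibility with $\psi,\psi'$ is essential.
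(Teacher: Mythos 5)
Your overall strategy is the paper's: localize near the chain with Lemma~\ref{lem:new}, pass to the mirror corner manifold, invoke Lemmas~\ref{lem:strip} and~\ref{lem:combined}, glue, and read off the partial flip from $S_i\mapsto S'_{n+1-i}$. The genuine gap is in how you endow $\widehat M$ with a symplectic form and moment map. You glue $U'_\beta$ to $M\smallsetminus\Theta(D)$ along $U_\beta\smallsetminus D$ using $H$, but on that region $H$ is \emph{not} a symplectomorphism: on $U_\alpha$ it coincides with the map $F$ of Lemma~\ref{lem:fliptri}, which \emph{negates} the symplectic form, so $H^*\omega_{N'}=-\omega_N$ near the chain. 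Consequently the two forms you propose to interpolate between differ by a sign on part of the overlap, and no convex-combination or Moser-type argument will produce a nondegenerate invariant form there; the sentence ``negates nothing globally in a way that obstructs gluing'' is exactly where the argument breaks. (There is also a small slip: the order reversed along the chain is that of the \emph{anti-diagonal} moment map --- the one restricting to the given $S^1$-action --- not the diagonal one, which $H$ preserves.)

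The repair is to separate the two roles you have merged, which is what the paper does. Construct $\widehat M$ by removing the closed set $\overline U_\beta$ from $\widetilde M$ and gluing in $U'_\gamma$ along the annulus $U_\gamma\smallsetminus\overline U_\beta\cong U'_\gamma\smallsetminus\overline U'_\beta$ using $(\psi')^{-1}\circ\psi$, which by Lemma~\ref{lem:strip} is an $(S^1)^2$-equivariant \emph{symplectomorphism} intertwining the moment maps; then $\widehat\omega$ and $\widehat\Phi$ exist with no interpolation at all. Only afterwards use $H$ --- which agrees with $(\psi')^{-1}\circ\psi$ outside $\overline U_\beta$ --- to define the equivariant diffeomorphism $\widetilde M\to\widehat M$ (the identity outside $U_\gamma$, and $H$ on $U_\gamma$); this map is merely a diffeomorphism, which is all the Proposition requires. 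Finally, you should invoke Lemma~\ref{lem:hef} explicitly to get an actual orientation-preserving equivariant diffeomorphism $M\to\widetilde M$ (not just an isomorphism of oriented dull graphs) before composing.
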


\begin{proof}
By Lemma~\ref{lem:new} there exists   a compact, connected, four-dimensional  Hamiltonian $S^1$-manifold $\widetilde{M}$ whose oriented dull graph is  isomorphic to $(\mathcal G, \mathcal O)$,  and an $S^1$-equivariant symplectomorphism from a triangular neighborhood $U_\gamma$ in a corner manifold $N$, equipped with the anti-diagonal circle action, to a neighborhood of the free chain in $\widetilde M$
 corresponding to $C$.
Identify this 
neighborhood in $\widetilde{M}$ with  $U_\gamma$.
Let $N'$ be the mirror manifold to $N$ and let $D$ and $D'$ be the chains of isotropy spheres in $N$ and $N'$, respectively. Because the chain $D$ is  contained in $U_\gamma$, it is also contained in $U_\beta$ for some $0 < \beta < \gamma$. Moreover, the chain $D'$ is contained in $U_\beta' \subset U_\gamma'$. (See Definition~\ref{def:triangular} and Remarks~\ref{rem:shrink} and ~\ref{rem:3'}.) 

By Lemma~\ref{lem:strip}, the    symplectic blowup maps $\psi \colon N \smallsetminus D \to \mathbb C^2$ and $\psi' \colon N' \smallsetminus D' \to \mathbb C^2$ are $(S^1)^2$-equivariant symplectomorphisms onto their image, and intertwine the moment maps.
Hence, they restrict to $(S^1)^2$-equivariant symplectomorphisms from $U_\gamma \smallsetminus \overline U_\beta$ and $U'_\gamma \smallsetminus \overline U'_\beta$, respectively, to
$$ \textstyle \big\{ z \in \mathbb C^2\,\, \big| \,\, \beta< \frac{1}{2}|z|^2 < \gamma \big\}.$$
Thus, the map $(\psi')^{-1} \circ \psi$ restricts to an $(S^1)^2$-equivariant symplectomorphism from $U_\gamma \smallsetminus \overline U_\beta$ to $U'_\gamma \smallsetminus\overline U'_\beta$ that intertwines the moment maps.
Using this isomorphism, we   construct a new Hamiltonian $S^1$-manifold $\widehat{M}$ 
by removing the closed set $\overline{U}_\beta$ from $\widetilde{M}$ and gluing in  $U'_\gamma$, equipped with the anti-diagonal circle action.

By  Lemma~\ref{lem:hef},  there is an orientation-preserving
equivariant diffeomorphism from $M$ to $\widetilde M$ that induces  an isomorphism from
$(\G, \mathcal O)$ to the oriented dull graph of $\widetilde M$.
Use this to identify the dull graph of $\widetilde M$ with $\G$.
By Lemma~\ref{lem:combined}, there
exists an $S^1$-equivariant, orientation-preserving diffeomorphism $H \colon N \to N'$ that  restricts to
$(\psi')^{-1}\circ\psi$ on $N\smallsetminus \overline{U}_\beta$ and intertwines the moment maps for the diagonal circle on $N$ and $N'$,
and so restricts to a diffeomorphism from $U_\gamma$ to $U'_\gamma.$
This gives an orientation-preserving, equivariant diffeomorphism from $\widetilde{M}$ to $\widehat{M}$ that 
induces an isomorphism from $(\G, \mathcal O')$ to the oriented dull graph of $\widehat M.$ 
Composing, we have an orientation-preserving, equivariant diffeomorphism from $M$ to $\widehat M$ 
that induces the 
isomorphism from $(\G, \mathcal O')$ to the oriented dull graph of $\widehat M.$
\end{proof}

We now have all the pieces in place to prove that every isomorphism of dull graphs is
induced by an orientation-preserving, equivariant diffeomorphism.

\begin{proof}[Proof of Theorem \ref{Thm:main}]
Assume that we are given an isomorphism between the  dull graphs $\G$ and $\widetilde \G$ of  two compact,  connected  four-dimensional Hamiltonian $S^1$-manifolds $(M,\omega,\Phi)$ and $(\widetilde{M},\widetilde \omega, \widetilde \Phi)$.
Then there is a sequence of orientations
$\mathcal O_0$, $\mathcal O_1$,
$\dots$, $\mathcal O_n$ on $\G$ such that
  $\mathcal O_0$ is the orientation induced by $M$, the given isomorphism takes $(\G, \mathcal O_n)$ to the orientation on $\widetilde \G$ induced by $\widetilde M$, and  for each $k$,  the orientation  $\mathcal O_k$ 
 is either the opposite to the  orientation  $\mathcal O_{k-1}$ or differs from  $\mathcal O_{k-1}$ by a single partial flip along a free chain.
We now  apply Lemma~\ref{lem:fullflip} and Proposition~\ref{prop:partial-flip-diffeo} 
to the oriented dull graphs $(\G, \mathcal O_0), \dots, (\G, \mathcal O_n)$,
inductively producing a sequence of four-dimensional Hamiltonian $S^1$-manifolds $M_0 = M$, $M_1$, $\dots$, $M_{n-1}$, $M_n$
 and an orientation-preserving equivariant diffeomorphism from  $M$ to $M_{k}$
for all $k$ that induces an orientation-preserving isomorphism from $(\G, \mathcal O_k)$ to  the oriented dull graph associated to $M_k$ for all $k$. 
In particular, there is an orientation-preserving, equivariant diffeomorphism from $M$ to $M_{n}$
that induces an orientation-preserving isomorphism from $(\G, \mathcal O_n)$
to the oriented dull graph associated to $M_n$.
The theorem now follows from Lemma~\ref{lem:hef}.
\end{proof}

\section{Proof of Theorem \ref{thm:strong}}
\label{sec:thm1}

The goal in this section is to prove our main result, Theorem~\ref{thm:strong},
which states that every algebra isomorphism between the equivariant cohomology algebras of 
compact, connected four-dimensional Hamiltonian $S^1$-manifolds 
is induced
by an equivariant diffeomorphism. 
In the special case that the algebra isomorphism is orientation preserving and the equivariant cohomology of 
the manifolds
vanishes in odd degrees, this Theorem -- like Corollary~\ref{cor:main} --  follows nearly immediately from Theorem~\ref{Thm:main} and the results in \cite{HK}.  Therefore, we will start by analyzing the provenance of
the odd-degree cohomology.

\begin{Lemma}\label{cohomin17}
Let $(M,\omega,\Phi)$ be a compact, connected four-dimensional 
Hamiltonian $S^1$-manifold. Let $R$ be the subring of 
$H^*_{S^1}(M;\Z)$ generated by $H^1_{S^1}(M;\Z)$. Then 
\begin{itemize}
\item[(i)] $H_{S^1}^*(M;\Z)$ is generated  as a ring  by $H_{S^1}^{2*}(M;\Z)$ and $H_{S^1}^1(M;\Z)$.
\item[(ii)] If  $H^1_{S^1}(M;\Z)$ is non-zero, 
then the minimum fixed component $M_{\min}$ is a surface  of
positive genus  and the inclusion map $M_{\min} \to M \times_{S^1} ES^1$ induces an isomorphism
from $R$ to $H^*(M_{\min};\Z)$.

\end{itemize}
\end{Lemma}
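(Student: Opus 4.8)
The proof will rest on the standard structural facts about compact four-dimensional Hamiltonian $S^1$-manifolds recalled in Section~\ref{sec:background}, together with Morse theory for the moment map $\Phi$. First I would observe that, by Kirwan surjectivity (or more elementarily by the Morse-theoretic argument below), the restriction map $H^*_{S^1}(M;\Z)\to H^*_{S^1}(M^{S^1};\Z)$ is injective, and that the only fixed component that can contribute to odd-degree cohomology is a fixed \emph{surface}: isolated fixed points have equivariant cohomology concentrated in even degrees, and $H^*_{S^1}(\Sigma;\Z)=H^*(\Sigma;\Z)\otimes H^*_{S^1}(\pt;\Z)$ has odd part coming only from $H^1(\Sigma;\Z)$. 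Since every fixed surface is extremal and there are at most two fixed surfaces, this already localizes the odd cohomology to (at most) the minimum and maximum.

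For part~(i): I would run a Morse-Bott stratification of $M$ by the moment map $\Phi$. Since $\Phi$ has a unique local minimum and maximum, and each non-extremal fixed component is an isolated point of index~$2$, the equivariant Morse inequalities are equalities (the action is equivariantly formal when $M^{S^1}$ is, which holds here because the odd cohomology of $M^{S^1}$ is generated in degree~$1$). Concretely, $H^*_{S^1}(M;\Z)$ is a free $H^*_{S^1}(\pt;\Z)$-module with one generator of degree $2\lambda_F$ (twice the number of negative weights) for each isolated fixed point $F$, plus generators coming from $H^*(M_{\min};\Z)$ and $H^*(M_{\max};\Z)$ shifted appropriately. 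The generators attached to isolated fixed points and to $H^{2*}$ of the extremal surfaces live in even degree; the only odd-degree module generators are the images of $H^1(M_{\min};\Z)$ and $H^1(M_{\max};\Z)$, which are degree-$1$ classes. Multiplying a degree-$1$ class by the even-degree generators produces everything in odd degree, which gives~(i). I would need to be slightly careful about how to realize $H^1$ classes of an extremal surface \emph{globally} in $H^1_{S^1}(M;\Z)$; the cleanest route is to note $H^1_{S^1}(M;\Z)\cong H^1(M;\Z)$ and use that $H^1(M;\Z)\to H^1(M_{\min};\Z)$ is an isomorphism once $M_{\min}$ is the unique source of $b_1$, which is the content we develop for~(ii).

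For part~(ii): suppose $H^1_{S^1}(M;\Z)\neq 0$. By Proposition~\ref{prop:diff}'s analysis (via Karshon's classification), a compact four-dimensional Hamiltonian $S^1$-manifold with $b_1\neq 0$ must be a blowup of an $S^2$-bundle over a positive-genus surface $\Sigma$, and the only fixed surfaces in such a manifold are copies of $\Sigma$ sitting at the extrema; in particular $M_{\min}$ is a surface of positive genus $g=\tfrac12 b_1$, and the blowups are all performed at isolated fixed points, so they do not change $H^1$. Hence $H^1(M;\Z)\to H^1(M_{\min};\Z)$ is an isomorphism. To upgrade this to an isomorphism of rings $R\to H^*(M_{\min};\Z)$, I would use that $R$ is generated by $H^1_{S^1}(M;\Z)$, that $H^*(M_{\min};\Z)$ is generated as a ring by $H^1(M_{\min};\Z)$ together with the fundamental class, and that on a genus-$g$ surface the cup product $H^1\otimes H^1\to H^2$ is the (perfect, hence surjective) intersection pairing; so the degree-$1$ part generating $R$ surjects onto all of $H^*(M_{\min};\Z)$, and injectivity follows from injectivity of the localization map to the extremal fixed components combined with the fact that the restriction $R\to H^*_{S^1}(M_{\max};\Z)$ factors through classes pulled back from $M_{\min}$ via the Atiyah--Bott fixed-point package (any $S^1$-equivariant class of degree~$1$ restricts compatibly to both ends, and a class restricting to zero on $M_{\min}$ would have to vanish).

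\textbf{Main obstacle.} The genuinely delicate point is not part~(i), which is bookkeeping with the Morse-Bott decomposition, but the \emph{injectivity} half of part~(ii): showing that the restriction-to-$M_{\min}$ map is injective on all of $R$, not merely an iso in degree~$1$. The clean way around it is to prove first that $M_{\max}$ contributes \emph{the same} $H^1$ as $M_{\min}$ (again by the blowup-of-an-$S^2$-bundle structure, $M_{\max}\cong\Sigma$ too) and that the restriction maps $H^1(M;\Z)\to H^1(M_{\min};\Z)$ and $H^1(M;\Z)\to H^1(M_{\max};\Z)$ are \emph{both} isomorphisms compatible under the identification $M_{\min}\cong M_{\max}\cong\Sigma$; then a class in $R$ restricting to zero on $M_{\min}$ restricts to zero on every fixed component, hence is zero by localization. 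I expect the write-up to spend most of its length making this compatibility precise, likely by exhibiting an explicit $S^1$-invariant deformation retract or by citing the known structure of $M/S^1$ as $[0,1]\times\Sigma$ from \cite{KT}, which directly yields $M\simeq \Sigma$ $S^1$-equivariantly up to the relevant degree.
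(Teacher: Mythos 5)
There is a genuine gap, and it sits precisely where the paper's proof does its real work. In part (i), your Morse--Bott bookkeeping misidentifies the degrees of the odd module generators: the generators contributed by $H^1(M_{\max};\Z)$ are shifted by the index of $M_{\max}$ (which is $2$), so they live in degree $3$, not degree $1$. Indeed $H^1_{S^1}(M;\Z)\cong H^1(M_{\min};\Z)\cong \Z^{2g}$ while $H^3_{S^1}(M;\Z)$ has rank $4g$, so the claim that ``the only odd-degree module generators \ldots\ are degree-$1$ classes'' is false, and your conclusion that multiplying degree-$1$ classes by even classes produces all of the odd cohomology is exactly the assertion that must be proved, not a consequence of the decomposition. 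The paper's proof of (i) is devoted to this point: following \cite{MT,TW} it builds, for $\beta\in H^1(M_{\max};\Z)$, a canonical class $\widetilde\beta\in H^3_{S^1}(M;\Z)$ restricting to $\beta\cup e_{S^1}(\nu_{M_{\max}})$ on the maximum and to $0$ on every other fixed component, together with $\varepsilon_{M_{\max}}\in H^2_{S^1}(M;\Z)$ supported on the maximum, and then proves the factorization $\widetilde\beta=\beta'\cup\varepsilon_{M_{\max}}$ (with $\beta'$ a degree-one class obtained by running the same construction for $-\Phi$) by comparing restrictions to all fixed components and using injectivity of $H^*_{S^1}(M;\Z)\to H^*_{S^1}(M^{S^1};\Z)$. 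Without this factorization your argument for (i) does not close.

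In part (ii) your surjectivity argument agrees with the paper's, but the injectivity half is incomplete at its one nontrivial case. Products of three or more degree-one classes restrict to zero on every fixed component for degree reasons (fine, and this is what the paper does too), but for a product $\gamma=\gamma_1\cup\gamma_2$ of degree $2$ you must show that $\gamma|_{M_{\min}}=0$ forces $\gamma|_{M_{\max}}=0$. Your proposed ``compatible identification'' of $H^*(M_{\min};\Z)$ with $H^*(M_{\max};\Z)$ intertwining the two restriction maps is a genuine condition --- a group isomorphism $H^1(\Sigma_g)\to H^1(\Sigma_g)$ for $g\ge 2$ need not respect cup products --- and you acknowledge but do not supply the argument. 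The paper avoids the issue entirely: since $\deg\gamma=2<\dim M$, we have $\int_M\gamma=0$, and the ABBV localization formula \cite{AB,BV} then gives $\int_{M_{\max}}\gamma|_{M_{\max}}/e_{S^1}(\nu_{M_{\max}})=0$, which forces $\gamma|_{M_{\max}}=0$. Note also that the paper does not route (ii) through Karshon's classification or the blowup description of $M$; the Morse-theoretic isomorphisms $H^1(M_{\min};\Z)\cong H^1(M;\Z)\cong H^1(M_{\max};\Z)$ already give everything needed on the degree-one level.
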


\begin{proof}
We may assume that  $H^{2i+1}_{S^1}(M;\Z) \neq 0$ for some $i$.
The moment map $\Phi$ is an  equivariantly perfect Morse-Bott function whose 
critical set is the fixed set,
and every fixed component has even index and coindex \cite{AB, GS:convex, Kirwan}.  
Hence, it has  a unique local minimum $M_{\min}$ and
a unique local maximum $M_{\max}$, and so
 $$H^1(M_{\min};\Z) = H^1(M;\Z) = H^1(M_{\max};\Z).$$  Since $M$ is $4$-dimensional,
$M_{\min}$ and $M_{\max}$ have dimension $0$ or $2$, while other fixed points are isolated.
Since $H^{2*+1}_{S^1}(M;\Z) \neq 0$, this implies that $M_{\min}$ and $M_{\max}$
are surfaces of the same positive genus.

As in \cite[\S 4]{TW},  since $H^*(F;\Z)$ is torsion-free for each fixed component $F$,  we
can adapt  the arguments used to prove \cite[Lemma 1.13]{MT} to get the following ``canonical classes" in $H^{2*+1}_{S^1}(M;\Z)$:
\begin{enumerate}
    \item\labell{canon:first}  For each $\alpha \in H^1(M_{\min};\Z)$, there exists $\widetilde \alpha \in H^1_{S^1}(M;\Z)$ 
    such that $\widetilde \alpha|_{M_{\min}} = \alpha$. Here, we use the isomorphism $H^*_{S^1}(F) = H^*(F) \otimes H^*_{S^1}(\pt)$ to
    identify $H^*(F)$ with a subring of 
    $H^*_{S^1}(F)$ for each fixed component $F$.
    \item\labell{canon:second} For each $\beta \in H^1(M_{\max};\Z)$, there exists $\widetilde \beta \in H^3_{S^1}(M;\Z)$
    such that
    $$\widetilde \beta|_{M_{\max}} =\beta \cup e_{S^1}(\nu_{M_{\max}})$$ 
    and $\widetilde \beta$ restricts to zero on every other fixed component.  Here,  $e_{S^1}(\nu_{M_{\max}})$ is the equivariant Euler class of the normal bundle to $M_{\max}$.
    \item There exists  $\varepsilon_{M_{\max}} \in H^2_{S^1}(M_{\max};\Z)$ such that
 $\varepsilon_{M_{\max}}|_{M_{\max}} = e_{S^1}(\nu_{M_{\max}})$ and  $\varepsilon_{M_{\max}}$ restricts to zero on every other fixed component.
\end{enumerate}
Moreover, as a module over  $H^{*}_{S^1}(\pt;\Z)$, the odd-degree cohomology
$H^{2*+1}_{S^1}(M;\Z)$ is generated by the canonical classes of types
\eqref{canon:first} and \eqref{canon:second}; hence,  $H^*_{S^1}(M;\Z)$ is generated by $H^{2*}(M;\Z)$ and the canonical classes of types
\eqref{canon:first} and \eqref{canon:second}.

Applying the same lemma to $-\Phi$, for each $\beta \in H^1(M_{\max};\Z)$ there exists
a class $\beta'$ in $H^1_{S^1}(M;\Z)$ whose restriction to $M_{\max}$ is $\beta$.   
The type \eqref{canon:second} canonical class $\widetilde \beta \in H^3_{S^1}(M;\Z)$
is product of this
class $\beta'$  of degree $1$ with the  class $\varepsilon_{M_{\max}} \in H^2_{S^1}(M;\Z)$  because
 $\varepsilon_{M_{\max}}$ restricts to zero on every non-maximal fixed component and $H^*_{S^1}(M;\Z) \to H^*_{S^1}(M^{S^1};\Z)$ is
injective.  
Claim (i) now follows immediately.

The inclusion map $M_{\min} \hookrightarrow M$ induces a homomorphism
$$
H_{S^1}^*(M;\Z)\to H_{S^1}^*(M_{\min};\Z) \to H^*(M_{\min};\Z).
$$
By the preceding paragraph, this map restricts to an isomorphism from $H_{S^1}^1(M;\Z)$ to $H^1(M_{\min};\Z)$.
 Since $M_{\min}$ is a surface of positive genus, $H^*(M_{\min};\Z)$ is generated by $H^1(M_{\min};\Z)$. Hence, since $R\subset H_{S^1}^*(M;\Z)$ is the subring generated by $H_{S^1}^1(M;\Z)$, the induced homomorphism $R \to H^*(M_{\min};\Z)$
is surjective.
To prove that this homomorphism is injective,  consider a class
$\gamma = \gamma_1 \cup \cdots \cup \gamma_n \in R$ that restricts to zero on $M_{\min}$, where  $n > 1$ and $\gamma_i \in H_{S^1}^1(M;\Z)$ for all $i$.
To prove that $\gamma$ is the zero class, it is enough to prove that its restriction to every  fixed component
vanishes.
Since the odd-dimensional equivariant cohomology of a point is trivial, each $\gamma_i$  restricts to the zero class on every fixed point, and so $\gamma$ restricts to a class in ordinary cohomology on each fixed component.
If $n > 3$, this implies that $\gamma$ vanishes when restricted to any fixed component.
 If $n = 2$,  this implies that $\gamma$ vanishes when restricted to any fixed component except possibly the maximum;
 moreover,  
 $$\int_{M_{\max}} \frac {\gamma|_{M_{\max}}}{e_{S^1}(\nu_{M_{\max}})} = 0 \text{ exactly if } \gamma|_{M_{\max}} = 0.$$
 Because  the integral of a class of degree $2$  over $M$ is
 zero, the ABBV  localization formula \cite{AB,BV} implies that
 $\gamma$ restricts to zero on the maximum as well. This completes the proof of Claim (ii).
\end{proof}

Next, when the odd cohomology is not trivial,  that is,  when the minimal
fixed surface has positive genus,  we need the following stronger version of Theorem~\ref{Thm:main}.

\begin{Lemma}\labell{lem:new eq diff}
Let $M$ and $\widetilde{M}$ be compact, connected, 
four\--dim\-en\-sion\-al Ham\-il\-ton\-ian $S^1$-manifolds with 
minimal fixed components that are symplectic surfaces $\Sigma$ and $\widetilde \Sigma$, respectively, with positive genus.
Let  $\psi$ be an isomorphism from the dull graph of $M$ to the dull graph of $\widetilde M$
that sends $\Sigma$ to $\widetilde \Sigma$, and let 
$\lambda \colon H^{*}(\widetilde \Sigma;\Z) \to H^{*}(\Sigma;\Z)$ 
be  a  ring isomorphism that preserves the orientations induced by the symplectic forms. 
Then there exists an orientation-preserving equivariant diffeomorphism $h\colon M \to \widetilde{M}$ such that $h$ induces 
the given isomorphism $\psi$ and  $(h|_{\Sigma})^* = \lambda.$
\end{Lemma}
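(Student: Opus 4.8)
The plan is to take the orientation-preserving equivariant diffeomorphism supplied by Theorem~\ref{Thm:main} and precompose it with an equivariant self-diffeomorphism of $M$ whose effect on $H^*(\Sigma;\Z)$ cancels the discrepancy between the prescribed isomorphism $\lambda$ and the one we happen to obtain; this self-diffeomorphism will be built from a diffeomorphism of the surface $\Sigma$ via Karshon's Theorem~\ref{thm:elucid}. The first ingredient is purely topological: for a closed oriented surface $\Sigma$ of genus $g \geq 1$, every ring automorphism of $H^*(\Sigma;\Z)$ that acts as the identity on $H^2(\Sigma;\Z)$ is of the form $\phi^*$ for some orientation-preserving diffeomorphism $\phi \colon \Sigma \to \Sigma$. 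Indeed, such an automorphism is determined by its restriction to $H^1(\Sigma;\Z) \cong \Z^{2g}$, which it carries to itself preserving the unimodular skew-symmetric cup-product pairing into $H^2(\Sigma;\Z)$, hence lies in $\mathrm{Sp}(2g,\Z)$; and the classical surjectivity of the mapping class group of $\Sigma$ onto $\mathrm{Sp}(2g,\Z)$ produces the required $\phi$.

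Next I would invoke Theorem~\ref{Thm:main} to get an orientation-preserving equivariant diffeomorphism $h_0 \colon M \to \widetilde M$ inducing $\psi$. Since $\psi$ carries the fat vertex of $\Sigma$ to that of $\widetilde\Sigma$, we have $h_0(\Sigma) = \widetilde\Sigma$, and I claim $h_0|_\Sigma \colon \Sigma \to \widetilde\Sigma$ is orientation-preserving for the symplectic orientations. The differential of $h_0$ is $S^1$-equivariant, so it carries the normal bundle $\nu$ of $\Sigma$ to the normal bundle of $\widetilde\Sigma$; both are complex line bundles on which $S^1$ acts with positive weight (since $\Sigma$ and $\widetilde\Sigma$ are minima), and an $S^1$-equivariant isomorphism of such line bundles is fiberwise complex-linear, hence orientation-preserving for the complex orientations. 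At the minimum the complex orientation of $\nu$ agrees with the symplectic one, so --- since $h_0$ preserves the symplectic orientation of $M$ and $TM|_\Sigma = T\Sigma \oplus \nu$ --- the restriction $h_0|_\Sigma$ preserves the symplectic orientation of $\Sigma$. Consequently $\mu := \lambda \circ \big((h_0|_\Sigma)^*\big)^{-1}$ is an orientation-preserving ring automorphism of $H^*(\Sigma;\Z)$, so by the previous paragraph $\mu = \phi^*$ for some orientation-preserving diffeomorphism $\phi \colon \Sigma \to \Sigma$.

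Then I would apply Karshon's Theorem~\ref{thm:elucid} with $\widetilde M$ there taken to be $M$ itself, with $f := \phi \colon M_{\min} \to M_{\min}$, and with the identity isomorphism of decorated graphs. This produces an equivariant symplectomorphism $\Phi \colon M \to M$ that induces the identity on the decorated graph --- hence on the dull graph, and in particular $\Phi(\Sigma) = \Sigma$ --- and with $\Phi|_\Sigma$ isotopic to $\phi$, so that $(\Phi|_\Sigma)^* = \phi^* = \mu$. Being a symplectomorphism, $\Phi$ is orientation-preserving. Setting $h := h_0 \circ \Phi \colon M \to \widetilde M$, the map $h$ is then an orientation-preserving equivariant diffeomorphism, it induces $\psi \circ \mathrm{id} = \psi$ on dull graphs, and $(h|_\Sigma)^* = (\Phi|_\Sigma)^* \circ (h_0|_\Sigma)^* = \mu \circ (h_0|_\Sigma)^* = \lambda \circ \big((h_0|_\Sigma)^*\big)^{-1} \circ (h_0|_\Sigma)^* = \lambda$, as required.

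The point requiring genuine care is the orientation bookkeeping in the middle step: Karshon's theorem accepts only an orientation-preserving diffeomorphism of the minimal fixed component, so one really must verify that $h_0|_\Sigma$, and therefore the correction $\mu$, is orientation-preserving before feeding the surface diffeomorphism $\phi$ back into the four-manifold. Everything else is assembly of results already in hand: Theorem~\ref{Thm:main}, Karshon's Theorem~\ref{thm:elucid}, and the classical fact that $\mathrm{Sp}(2g,\Z)$ is realized by mapping classes of $\Sigma$.
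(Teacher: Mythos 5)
Your proposal is correct and follows essentially the same route as the paper's proof: obtain a diffeomorphism inducing $\psi$ from Theorem~\ref{Thm:main}, verify via the weight-$+1$ normal bundle that its restriction to the minimal surface is orientation-preserving, realize the resulting correction automorphism of $H^*(\Sigma;\Z)$ by a surface diffeomorphism (the $\mathrm{Sp}(2g,\Z)$ surjectivity you spell out is exactly what the paper cites from Farb--Margalit), and promote it to an equivariant symplectomorphism with Theorem~\ref{thm:elucid}. The only cosmetic difference is that you correct by precomposing with a self-map of $M$ while the paper postcomposes with the inverse of a self-map of $\widetilde M$.
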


\begin{proof}
Let $\psi$ be an isomorphism from the dull graph of $M$ to that of $\widetilde M$.
By Theorem~\ref{Thm:main}, there exists an orientation-preserving  
equivariant diffeomorphism  $f \colon M \to \widetilde M$  that induces 
the given isomorphism $\psi$.

The surface $\Sigma$ is a minimal fixed component.  Hence, if  we use the symplectic orientations on $\Sigma$ and $M$
to orient the normal bundle $\nu(\Sigma)$ to  $\Sigma$, then $S^1$ acts on $\nu(\Sigma)_p$ with weight $+1$ for all $p \in \Sigma$; 
a similar statement applies to $\widetilde \Sigma$.
Therefore,  since $f$ is an orientation preserving equivariant diffeomorphism taking $\Sigma$ to $\widetilde \Sigma$,
the pull-back map in cohomology 
$$(f|_\Sigma)^* \colon H^*(\widetilde \Sigma;\Z) \to H^*(\Sigma;\Z)$$  preserves the symplectic orientations.
By  \cite[Theorems~1.13 and~6.4]{Farb Margalit},  there exists an orientation preserving diffeomorphism $g\colon \widetilde \Sigma \to \widetilde \Sigma$  such that $g^* = \lambda^{-1} \circ  (f|_{\Sigma})^*.$
Then  by Theorem \ref{thm:elucid}, there exists an equivariant symplectomorphism
$\widehat g \colon \widetilde M \to \widetilde M$  that induces the identity map on the dull graph  such that the restriction $\widehat{g}|_{\widetilde{\Sigma}}$
is isotopic to $g$. 
Finally, the map  $h = \widehat{g}^{-1} \circ f  \colon M \to  \widetilde M$ is the desired equivariant diffeomorphism.
\end{proof}

\noindent A final ingredient is to construct explicitly some
orientation-reversing diffeomorphisms.

\begin{Lemma}\label{2reversing}
Let $(M,\omega,\Phi)$ be a compact, connected four-dimensional Hamiltonian $S^1$-manifold with $b_2(M) = 2$.
Then there exists an orientation-reversing $S^1$-equivariant diffeomorphism from $M$ to itself.
\end{Lemma}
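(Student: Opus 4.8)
The classification result of Proposition~\ref{prop:diff} tells us that a compact, connected four-dimensional Hamiltonian $S^1$-manifold with $b_2(M) = 2$ is, as a smooth oriented manifold, an oriented $S^2$-bundle over a compact oriented surface $\Sigma$ of genus $g = \frac12 \rank H^1(M;\Z)$; there are exactly two such bundles, the trivial one $S^2 \times \Sigma$ and the nontrivial one. So the plan is to produce an orientation-reversing equivariant self-diffeomorphism separately in each of these two cases. In fact, it suffices to work at the level of oriented dull graphs: by Lemma~\ref{lem:hef} together with Lemma~\ref{lem:fullflip}, if we can exhibit a Hamiltonian $S^1$-manifold $M$ (with $b_2 = 2$) admitting an orientation-reversing \emph{isomorphism of its own dull graph}, then Lemma~\ref{lem:fullflip} produces $(\widetilde M, \widetilde\omega, \widetilde\Phi) = (M, -\omega, -\Phi)$ together with an orientation-preserving equivariant diffeomorphism inducing that orientation-reversing isomorphism; composing with an orientation-preserving equivariant diffeomorphism $\widetilde M \to M$ realizing the dull-graph isomorphism (again Lemma~\ref{lem:hef}) gives the desired orientation-reversing self-diffeomorphism of $M$. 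Since every $M$ with $b_2 = 2$ is equivariantly diffeomorphic to one of the standard Hamiltonian models on the two bundles (by Corollary~\ref{cor:main}, using that the dull graph is determined by the smooth data), it is enough to treat those models.

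The standard Hamiltonian $S^1$-action on $S^2 \times \Sigma$ (rotation of the $S^2$ factor) has fixed set $\Sigma \sqcup \Sigma$, so its dull graph is two fat vertices, one labeled minimum and one labeled maximum, each with genus $g$ and self-intersection $e = 0$, joined by no edge. Switching the two vertices is an orientation-reversing automorphism of this dull graph: interchanging the roles of minimum and maximum corresponds to replacing $\Phi$ by $-\Phi$, which reverses orientation (equivalently, reverses the orientation of the $S^2$ fiber), and the self-intersection $0$ and genus $g$ labels are preserved. For the nontrivial bundle the standard Hamiltonian $S^1$-action again has fixed set $\Sigma \sqcup \Sigma$, but now one fat vertex has self-intersection $+n$ and the other $-n$ for some $n > 0$ (the two sections of the bundle). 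One checks that passing to $-\Phi$ both interchanges the two vertices and negates the self-intersection numbers, so the vertex with label $+n$ is carried to the vertex with label $+n$ and likewise for $-n$: this is again a well-defined orientation-reversing automorphism of the dull graph. In both cases Lemma~\ref{lem:fullflip} and Lemma~\ref{lem:hef}, as above, upgrade this dull-graph symmetry to an orientation-reversing equivariant self-diffeomorphism of $M$.

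Alternatively — and this is perhaps cleaner — one can write the diffeomorphism down by hand. On $S^2 \times \Sigma$, take the map $(p, q) \mapsto (\sigma(p), q)$ where $\sigma \colon S^2 \to S^2$ is an orientation-reversing rotation-equivariant involution (reflection in the rotation axis, i.e. $z \mapsto \bar z$ on $\CP^1$ with the standard circle action); this is $S^1$-equivariant and reverses orientation on the fiber, hence on the total space. For the nontrivial bundle, realize it as the fiberwise projectivization $\mathbb P(\calO \oplus \calO(n)) \to \Sigma$ (or as a corner-type construction patched over $\Sigma$) and use the analogous fiberwise conjugation, which swaps the two sections, negates self-intersections, and reverses orientation. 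The main obstacle is purely bookkeeping: one must confirm that the chosen fiberwise reflection really is $S^1$-equivariant for the given Hamiltonian action and genuinely reverses the total-space orientation, and — in the nontrivial case — that one has correctly identified the smooth model and that the self-intersection labels behave as claimed under $\Phi \mapsto -\Phi$. None of this is deep; once the two model cases are settled, Proposition~\ref{prop:diff} and Corollary~\ref{cor:main} (to transport from an arbitrary $M$ with $b_2 = 2$ to a standard model) complete the argument.
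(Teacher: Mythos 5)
Your primary route has a genuine logical gap: you compose the orientation-preserving identity map $M \to (M,-\omega,-\Phi)$ with an orientation-preserving equivariant diffeomorphism $(M,-\omega,-\Phi)\to M$ supplied by Lemma~\ref{lem:hef}, and a composition of orientation-preserving diffeomorphisms is orientation-preserving. You are conflating two different notions: the orientation of the \emph{manifold} (which is what the Lemma asks you to reverse, and which is unchanged when $\omega$ is replaced by $-\omega$, since $\omega^2=(-\omega)^2$) and the orientation of the \emph{dull graph} (the min/max labels and edge directions, which is what your composite reverses). Indeed, on $S^2\times S^2$ the product of the equatorial reflection on the rotated factor with any orientation-reversing diffeomorphism of the other factor is an orientation-preserving equivariant self-diffeomorphism that swaps $M_{\min}$ and $M_{\max}$; reversing the dull-graph orientation does not force a map to reverse the manifold orientation. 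A related confusion undermines your reduction step: the dull graph is \emph{not} ``determined by the smooth data'' --- a Hirzebruch surface carries circle actions with two fixed surfaces and others with four isolated fixed points, and these are not equivariantly diffeomorphic --- so you cannot transport the problem to the two fiber-rotation models. You must also handle the actions with isolated fixed points and with a single fixed surface.

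Your ``alternative'' explicit construction is the right idea and is essentially what the paper does, but the map you write down is wrong: on $\CP^1$ with the action $\lambda\cdot z=\lambda z$, conjugation $z\mapsto\bar z$ (reflection in a plane containing the rotation axis) satisfies $\overline{\lambda z}=\bar\lambda\,\bar z$ and is therefore \emph{anti}-equivariant. The equivariant orientation-reversing involution is the reflection in the equatorial plane, $z\mapsto 1/\bar z$, i.e.\ $[w_1,w_2]\mapsto[\overline{w_2},\overline{w_1}]$, which also swaps the two fixed points of the fiber. The paper applies exactly this fiberwise reflection, but on the explicit models from Karshon's classification --- $(\C^2\smallsetminus\{0\})\times_{\C^\times}\CP^1$ when the fixed points are isolated or there is one fixed surface, and $E\times_{\C^\times}\CP^1$ when there are two fixed surfaces --- where it is visibly $(S^1)^2$-equivariant (resp.\ $S^1$-equivariant) and hence covers all the circle actions at once, including those your reduction misses.
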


\begin{proof}
Assume first that either every fixed point is isolated or $M$ has exactly one fixed surface.
Then by Karshon's classification \cite{karshon}, $M$ is a Hirzebruch surface, and the circle action is
the restriction of a subgroup of the natural $(S^1)^2$ action.  
Specifically, we can identify $M$ with
$$ (\C^2 \smallsetminus \{0\} ) \times_{\C^\times} \CP^1,$$
where $\C^\times$ acts diagonally on $\C^2 \smallsetminus \{0\}$, and acts on $\CP^1$ by $\lambda \cdot [w_1,w_2] = [\lambda^N w_1, w_2]$ for some $N \in \Z$.
The  compact torus $(S^1)^2$ acts on $M$ by 
$$(\alpha, \beta) \cdot [z_1, z_2; w_1, w_2] = [\alpha z_1, z_2; \beta w_1, w_2].$$
Hence the map from $M$ to itself sending $[z_1,z_2;w_1,w_2]$ to $[z_1,z_2; \overline{w_2}, \overline{w_1}]$ is
an orientation-reversing $(S^1)^2$-equivariant diffeomorphism.

Now assume instead that  $M$ has two fixed surfaces.   Then  by Karshon's classification $M$ is a ruled $S^1$-manifold. 
More specifically, we may identify $M$ with $$M := E \times_{\C^\times}  \CP^1, $$
where $E \to \Sigma$ is a principal $\C^\times$ bundle over a compact oriented surface $\Sigma$ and
 $\C^\times$ acts on $\CP^1$ by $\lambda \cdot [w_1, w_2] = [\lambda w_1, w_2]$.
The circle $S^1$ acts on $M$ by $\alpha \cdot [e; w_1, w_2] = [e; \alpha w_1, w_2].$
Hence the map from $M$ to itself sending $[e ;w_1,w_2]$ to $[e ; \overline{w_2}, \overline{w_1}]$ is
an orientation-reversing $(S^1)$-equivariant diffeomorphism.
\end{proof}

We now have all the ingredients needed to prove  Theorem~\ref{thm:strong}.

\begin{proof}[Proof of Theorem \ref{thm:strong}.]
Let  $M$ and $\widetilde{M}$ be compact, connected four-dimensional Hamiltonian $S^1$-manifolds
and let
$$\Lambda \colon H^*_{S^1}(M;\Z) \to H_{S^1}^*(\widetilde M;\Z)$$
be an isomorphism of algebras. We want to show that
there exists an equivariant diffeomorphism 
 from $\widetilde{M}$ to $M$ 
 that induces the algebra isomorphism $\Lambda$.

We begin with the case when $\Lambda$
is orientation-preserving. 
By  \cite[Corollary~7.23]{HK},
there 
exists an isomorphism $\psi$  from the dull graph of $\widetilde M$ to the dull graph of $M$ such that $\Lambda(\varepsilon_{\psi(\widetilde F)}) = \varepsilon_{\widetilde F}$ for each fixed component $\widetilde F \subset  \widetilde M^{S^1}$, where the classes $\varepsilon_{\widetilde F}$ are defined as follows:
Given a fixed component $\widetilde F \subset  \widetilde M^{S^1} \! \! ,$
let $\varepsilon_{\widetilde F}$ denote the class 
whose restriction to $\widetilde F$ is the equivariant Euler class $e_{S^1}(\nu_{\widetilde F})$ of the normal bundle to $\widetilde F$, and whose restriction to every other fixed component is $0$.
We define $\varepsilon_{F}$ for a fixed component $F \subset  M^{S^1}$ analogously.
Here, if $\widetilde F = \{p\}$ is an isolated fixed point, we take the natural (symplectic) orientation on $\nu(\widetilde F) = T_p \widetilde M$, while if  $\widetilde F$ is a surface, we orient the normal bundle $\nu(\widetilde F)$ to $\widetilde F$ so that $S^1$ acts on $\nu(\widetilde F)_p$ with weight $+1$ for all $p \in \widetilde F$. 
Moreover, if the minimal fixed component of $\widetilde M$ is a surface $\widetilde \Sigma$ then
 we may assume that $\psi(\widetilde \Sigma)$ is a minimal fixed 
surface, by replacing $\omega$ by $-\omega$ if necessary.

Let's now also assume that $H^{1}_{S^1}(M;\Z) = 0$.  By Theorem~\ref{Thm:main} there exists an orientation-preserving  $S^1$-equivariant diffeomorphism $h \colon \widetilde M \to M$ \  inducing the isomorphism $\psi$ of dull graphs; in particular, $h(\widetilde F) = \psi(\widetilde F)$ for all fixed components $\widetilde F \subset \widetilde{M}^{S^1}$. 
 Since $h$ is an orientation-preserving  equivariant diffeomorphism, 
 this implies that
$$h^*(\varepsilon_{\psi(\widetilde F)}) =  \varepsilon_{\widetilde F} =  \Lambda(\varepsilon_{\psi(\widetilde F)})$$
for all fixed components $\widetilde F \subset \widetilde M^{S^1}$.  Hence,  by the uniqueness claim in the first statement in \cite[Theorem~7.22]{HK}, we must have
\begin{equation*}\labell{eq:even equal}
h^{*}\big|_{H_{S^1}^{2*}(M;\Z)}=\Lambda\big|_{H_{S^1}^{2*}(M;\Z)}.
\end{equation*}
Therefore, since  $H^{2*+1}(M;\Z) =0$ by Lemma~\ref{cohomin17}(i),
we may conclude that $h^* = \Lambda$.

Now suppose instead that $H^{1}_{S^1}(M;\Z) \neq 0$, and let
$$R \subset H^{*}_{S^1}(M;\Z) \quad  \mbox{and} \quad\widetilde R \subset H^{*}_{S^1}(\widetilde{M};\Z)$$ 
be the subrings 
generated by elements of degree $1$. Then, by 
Lemma~\ref{cohomin17}(ii), the minimal fixed components of $M$ and $\widetilde M$ are
positive genus surfaces $\Sigma$ and $\widetilde \Sigma$, respectively.  
Moreover,   the restriction maps 
$$H_{S^1}^{*}(M;\Z) \to H^*(\Sigma;\Z) \quad
\mbox{and} \quad  H_{S^1}^{*}(\widetilde{M};\Z) \to H^*(\widetilde{\Sigma};\Z)
$$ 
restrict to  isomorphisms between $R$ and $H^*(\Sigma;\Z)$
and between $\widetilde{R}$ and $H^*(\widetilde{\Sigma};\Z)$, respectively.
Since
$\Lambda$ restricts to an isomorphism from  $R$ to $\widetilde{R}$, this implies that 
$\Lambda$ induces an isomorphism in ordinary cohomology  $$\lambda \colon H^*(\Sigma;\Z) \stackrel{\simeq}{\longrightarrow} H^*(\widetilde \Sigma;\Z)$$ such that 
$$\lambda( \alpha|_{\Sigma}) = \Lambda(\alpha)|_{\widetilde \Sigma}$$
for all $\alpha \in R \subset H^*_{S^1}(M;\Z)$.
Moreover,  since $\Lambda$  is orientation-preserving and 
$\Lambda(\varepsilon_{ \Sigma})=\varepsilon_{\widetilde \Sigma}$, 
\begin{equation}\labell{eq:integral of euler}
\int_M \varepsilon_\Sigma\, \alpha = \int_{\widetilde M} \varepsilon_{\widetilde \Sigma}\,\Lambda(\alpha)
\end{equation}
for all $\alpha \in H_{S^1}^2(M;\Z)$. 
Since $\Sigma$ is the minimal fixed component of $M$,  the symplectic orientation on $\Sigma$ and the orientation on $\nu(\Sigma)$ described in the first paragraph  combine
to give the natural (symplectic) orientation on $TM|_\Sigma$. A similar claim applies to $\widetilde \Sigma \subset \widetilde M.$
Hence, by \eqref{eq:integral of euler}, the definition of $\varepsilon_\Sigma$, and the Atiyah-Bott and Berline-Vergne localization theorem,
$$
\int_{\Sigma} \alpha|_{\Sigma} = \int_{\widetilde \Sigma} \Lambda(\alpha)|_{\widetilde \Sigma}
$$
for all $\alpha \in H^2_{S^1}(M;\Z).$
Since the restriction map from $R \subset H^*_{S^1}(M;\Z)$ to $H^*(\Sigma,\Z)$ is surjective, this implies that 
$$
\int_\Sigma \beta = \int_{\widetilde \Sigma} \lambda(\beta)
$$ 
for any $\beta \in H^2(\Sigma;\Z)$; in other words, $\lambda$ is orientation preserving.
Hence, by Lemma~\ref{lem:new eq diff}, there exists an
 orientation-preserving equivariant diffeomorphism $h \colon \widetilde{M} \to {M}$  inducing  
the isomorphism of dull graphs $\psi$ and such that $(h|_{\widetilde{\Sigma}})^* = \lambda.$
As in the previous paragraph, we can now use \cite[Theorem~7.22]{HK}
to deduce from the fact that 
the equivariant diffeomorphism $h$ induces the isomorphism  $\psi$
that 
 $h^{*}\big|_{H_{S^1}^{2*}(M)}=\Lambda\big|_{H_{S^1}^{2*}(M)}$. 
 Since $(h|_{\widetilde{\Sigma}})^* = \lambda$,  Lemma~\ref{cohomin17}(ii) implies that  $h^{*}\big|_{H_{S^1}^{1}(M)}=\Lambda\big|_{H_{S^1}^{1}(M)}$.
 By Lemma~\ref{cohomin17}(i), together these imply  that $h^* = \Lambda.$

Finally, we turn to the case that $\Lambda$ is orientation-reversing. By \cite[Proposition 7.26]{HK}, this implies that $\dim H^2(M;\Z) = \dim H^2(\widetilde{M};\Z) = 2$. Therefore, by Lemma~\ref{2reversing}, 
 there is an orientation-reversing equivariant diffeomorphism $g$ from $M$ to itself.   By definition,  $g^* \circ \Lambda$ is orientation-preserving. So by the previous paragraphs there exists an  equivariant diffeomorphism $h \colon \widetilde{M} \to {M}$ such that $h^* = g^* \circ \Lambda$.  Hence, $(g^{-1} \circ h)^*   = \Lambda$. 
\end{proof}

\end{document}